\newtheorem{theorem}{Theorem}[section]
\newtheorem*{theorem*}{Theorem}
\newtheorem*{lemma*}{Lemma}
\newtheorem*{prop*}{Proposition}
\newtheorem{cor}[theorem]{Corollary}
\newtheorem{prop}[theorem]{Proposition}
\newtheorem{quest}[theorem]{Question}
\theoremstyle{definition}
\newtheorem{example}[theorem]{Example}
\newtheorem{defn}[theorem]{Definition}
\newtheorem{remark}[theorem]{Remark}
\renewcommand{\emptyset}{\varnothing}
\newcommand{\R}{\mathbb{R}}
\newcommand{\Z}{\mathbb{Z}}
\newcommand{\conv}{\mathrm{conv}}
\newcommand{\cone}{\mathrm{cone}}
\newcommand\WT{w}%
\newcommand\NO{\eta}%
\newcommand\GI{\mathrm{GI}}%
\newcommand\pSE{\mathrm{pSE}}%
\newcommand\MS{\mathrm{MS}}%
\newcommand\ED{\mathrm{ED}}%
\newcommand\SP{\mathrm{SP}}%
\newcommand\eps{\varepsilon}%
\newcommand\arb{\mathcal{A}}%
\newcommand\PPGKZ{\psi}%
\newcommand\PP{\Pi}%
\newcommand\NP[2][~]{\Gamma_{#2}^{#1}}%
\newcommand\br{\mathcal{B}}%
\newcommand\Mon{\Sigma}%
\newcommand\vopt{v_{opt}}%
\newcommand\vmopt{v_{-opt}}%
\newcommand\Def[1]{\textbf{#1}}%
\newcommand\Nb[2]{\mathrm{Nb}_{#1}(#2)}%
\DeclareMathOperator{\argmax}{argmax}
\newcommand\NONCOH[1][1.3]{
\begin{tikzpicture}[scale=#1,>=latex,->,thick,color=gray]
\tikzstyle Y=[color=red];

% (0,0,1)
\draw[Y] (0.5,0.5) -- (1.5,0.5);
\draw (0.5,0.5) -- (0.5,1.5);

% (0,1,1)
\draw[Y] (0.5,1.5) -- (1.5,1.5);

% (1,0,1)
\draw[Y] (1.5,0.5) -- (1.5,1.5);

% (0,0,0)
\draw (0,0) -- (1,0);
\draw[Y] (0,0) -- (0.5,0.5);
\draw (0,0) -- (0,1);

% (0,1,0)
\draw[color=white,ultra thick] (0,1) -- (1,1);
\draw[Y] (0,1) -- (1,1);
\draw (0,1) -- (0.5,1.5);

% (1,0,0)
\draw[color=white,ultra thick] (1,0) -- (1,1);
\draw[Y] (1,0) -- (1,1);
\draw (1,0) -- (1.5,0.5);

% (1,1,0)
\draw[Y] (1,1) -- (1.5,1.5);
\end{tikzpicture}
}
\newcommand\COH[1][1.3]{
\begin{tikzpicture}[scale=#1,>=latex,->,thick,color=gray]
\tikzstyle Y=[color=red,thick];

% (0,0,1)
\draw[Y] (0.5,0.5) -- (1.5,0.5);
\draw (0.5,0.5) -- (0.5,1.5);

% (0,1,1)
\draw[Y] (0.5,1.5) -- (1.5,1.5);

% (1,0,1)
\draw[Y] (1.5,0.5) -- (1.5,1.5);

% (0,0,0)
\draw[Y] (0,0) -- (1,0);
\draw (0,0) -- (0.5,0.5);
\draw (0,0) -- (0,1);

% (0,1,0)
\draw[color=white,ultra thick] (0,1) -- (1,1);
\draw[Y] (0,1) -- (1,1);
\draw (0,1) -- (0.5,1.5);

% (1,0,0)
\draw[color=white,ultra thick] (1,0) -- (1,1);
\draw[Y] (1,0) -- (1,1);
\draw (1,0) -- (1.5,0.5);

% (1,1,0)
\draw[Y] (1,1) -- (1.5,1.5);
\end{tikzpicture}
}
\newcommand\COHH[1][1.3]{
\begin{tikzpicture}[scale=#1,>=latex,->,thick,color=gray]
\tikzstyle Y=[color=red];

% (0,0,1)
\draw[Y] (0.5,0.5) -- (1.5,0.5);
\draw (0.5,0.5) -- (0.5,1.5);

% (0,1,1)
\draw[Y] (0.5,1.5) -- (1.5,1.5);

% (1,0,1)
\draw[Y] (1.5,0.5) -- (1.5,1.5);

% (0,0,0)
\draw (0,0) -- (1,0);
\draw (0,0) -- (0.5,0.5);
\draw[Y] (0,0) -- (0,1);

% (0,1,0)
\draw[color=white,ultra thick] (0,1) -- (1,1);
\draw[Y] (0,1) -- (1,1);
\draw (0,1) -- (0.5,1.5);

% (1,0,0)
\draw[color=white,ultra thick] (1,0) -- (1,1);
\draw[Y] (1,0) -- (1,1);
\draw (1,0) -- (1.5,0.5);

% (1,1,0)
\draw[Y] (1,1) -- (1.5,1.5);
\end{tikzpicture}
}
\newcommand\FP[1][1.3]{
\begin{tikzpicture}[scale=#1,>=latex,->,thick,color=gray]
\tikzstyle Y=[color=blue];

% (0,0,1)
\draw[Y] (0.5,0.5) -- (1.5,0.5);
\draw (0.5,0.5) -- (0.5,1.5);

% (0,1,1)
\draw[Y] (0.5,1.5) -- (1.5,1.5);

% (1,0,1)
\draw[Y] (1.5,0.5) -- (1.5,1.5);

% (0,0,0)
\draw (0,0) -- (1,0);
\draw (0,0) -- (0.5,0.5);
\draw[Y] (0,0) -- (0,1);

% (0,1,0)
\draw[color=white,ultra thick] (0,1) -- (1,1);
\draw[Y] (0,1) -- (1,1);
\draw[Y] (0,1) -- (0.5,1.5);

% (1,0,0)
\draw[color=white,ultra thick] (1,0) -- (1,1);
\draw[Y] (1,0) -- (1,1);
\draw (1,0) -- (1.5,0.5);

% (1,1,0)
\draw[Y] (1,1) -- (1.5,1.5);
\end{tikzpicture}
}
\begin{document}
\newcommand\NW{\mathrm{NW}}%
\renewcommand\int{\mathrm{int}}%
\newcommand\Ncone{\mathcal{N}}%
\newcommand\EZ{\mathcal{E}}%

\title[The Polyhedral Geometry of Pivot Rules and Monotone Paths]{The Polyhedral Geometry of \\ Pivot Rules and Monotone Paths}

\author[A.~Black \and J.~De Loera \and N.~L\"utjeharms \and R.~Sanyal]{Alexander
E. Black \and Jes\'{u}s A. De Loera \and Niklas L\"utjeharms \and Raman Sanyal}

\address[A.~Black, J.~De Loera]{Dept.\ Mathematics, Univ. of California,
Davis, CA 95616, USA}
\email{aeblack@ucdavis.edu, deloera@math.ucdavis.edu} 

\address[N.~L\"utjeharms, R.~Sanyal]{Institut f\"ur Mathematik, Goethe-Universit\"at Frankfurt, Germany} 
\email{luetjeharms@stud.uni-frankfurt.de, sanyal@math.uni-frankfurt.de}

\begin{abstract}
    Motivated by the analysis of the performance of the simplex method we study
    the behavior of families of pivot rules of linear programs. We introduce
    \emph{normalized-weight pivot rules} which are fundamental for the following
    reasons: First, they are \emph{memory-less}, in the sense that the pivots
    are governed by local information encoded by an arborescence. Second, many
    of the most used pivot rules belong to that class, and we show this subclass
    is critical for understanding the complexity of all pivot rules. Finally,
    normalized-weight pivot rules can be parametrized in a natural continuous
    manner.

    We show the existence of two polytopes, the \emph{pivot rule polytopes} and
    the \emph{neighbotopes}, that capture the behavior of normalized-weight
    pivot rules on polytopes and linear programs. We explain their face
    structure in terms of multi-arborescences.  We compute upper bounds on the
    number of coherent arborescences, that is, vertices of our polytopes.

    Beyond optimization, our constructions provide new perspectives on
    classical geometric combinatorics. We introduce a normalized-weight pivot
    rule, we call the \emph{max-slope pivot rule} which generalizes the
    shadow-vertex pivot rule. The corresponding pivot rule polytopes and
    neighbotopes refine \emph{monotone path polytopes} of Billera--Sturmfels.
    Moreover special cases of our polytopes yield permutahedra, associahedra,
    and multiplihedra. For the greatest improvement pivot rules we draw
    connections to sweep polytopes and polymatroids.
\end{abstract}

\maketitle

\section{Introduction}\label{sec:intro}

For $A \in \R^{n \times d}, b \in \R^n, c \in \R^d$ we consider the linear program (LP)
\[
    \begin{array}{ll}
        \max & c^t x \\
        \text{s.t.} & Ax \le b 
    \end{array}
\]
The \emph{simplex method} is one of the most popular algorithms for
solving linear programs (see
\cite{bertsimastsitsiklis,Dantzigbook,Schrijver1986}).  The key
ingredient, which is decisive for the running time on a given
instance, is the choice of a \emph{pivot rule}.  Since the
inception of the simplex algorithm, many different pivot rules have
been proposed and analyzed. Starting with Klee and Minty in 1972 \cite{kleeminty} many of the popular pivot rules have been
shown to require an exponential number of steps; see 
\cite{amentaziegler,AvisFriedmann2017,Friedmannetal,Hansen+Zwick2015,terlaky,zadeh2,zieglerextremelps}
and references there. To this day, no pivot rule is known
to take only polynomially many steps on every LP. In this paper
we study the behavior of parametric families of pivot rules and
uncover a rich polyhedral structure. We define polytopes whose
geometry capture the behavior of pivot rules on given LPs.  This
provides a new perspective on the study of the performance of the
simplex method. 

Our constructions are also of interest to the (geometric)
combinatorics community. A generic linear function $c$ induces an
acyclic orientation on the graph of the polytope $P = \{ x \in \R^d
: Ax \le b \}$. The collection of $c$-monotone paths has a natural
topological structure that is studied under the name \emph{Baues
poset}. In the seminal paper~\cite{BKS}, Billera, Kapranov, and
Sturmfels showed that the Baues poset has the homotopy type of a sphere and is represented by the boundary of the \emph{monotone path polytope}
from~\cite{BSFiberPoly}. The vertices of monotone path polytopes are in
bijection to special monotone paths, called \emph{coherent}.  Later, many
important combinatorial constructions and polytopes arose as
monotone path polytopes.  By replacing $c$-monotone paths by
$c$-monotone arborescences, our constructions provide a
generalization of the theory of monotone path polytopes and
many prominent combinatorial polytopes.

Formalizing the notion of a pivot rule is complicated; for example, several
authors showed that pivot rules can be used to encode problems that are hard
in the sense of complexity theory
\cite{adler+papadimitriou+rubinstein,disser+skutella, fearnley+savani}. We
will not try to give a precise definition of what constitutes a pivot rule
because our  taxonomy of pivot rules relies only on a polyhedral geometry
perspective. Throughout, we will refer to $(P,c)$ as the \Def{linear program}.
Geometrically the simplex method finds a $c$-monotone path in the graph of $P$
from any initial vertex $v$ of $P$ to the optimal vertex $\vopt$. The
algorithm proceeds along directed edges.  At any non-sink $v$, the pivot rule
chooses a neighboring vertex $u$ of $v$ with $c^t u > c^t v$.  

\begin{defn}
    The \Def{footprint} of a pivot rule $R$ on an LP $(P,c)$
    is the directed acyclic subgraph obtained as the union of all
    $c$-monotone paths produced by $R$ for every starting
    vertex.
    The pivot rule $R$ is a \Def{memory-less} pivot rule if
    its footprint for every LP is an \Def{arborescence}, i.e., a
    directed tree with root at the optimal vertex $\vopt$. 
\end{defn}

\begin{figure}[t]
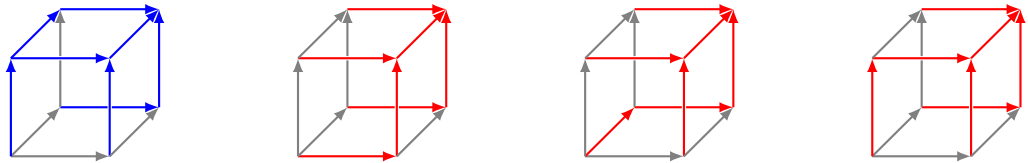

    \centering
    \FP
    \qquad
    \qquad
    \COH
    \qquad
    \qquad
    \NONCOH
    \qquad
    \qquad
    \COHH
    \caption{A footprint and three arborescences on the unit cube. The second
    arborescence from the left does not come from a NW-rule.}
    \label{fig:cube_arbs}
\end{figure}

Figure~\ref{fig:cube_arbs} shows a footprint and three such arborescences on a
$3$-cube.  Equivalently, a pivot rule is memory-less if it chooses
the neighbor of $v \neq \vopt$ using only \emph{local} information
provided by the set of neighbors $\Nb{P}{v}$ of $v$. Many rules
that are used in practice, including \emph{greatest improvement}
and \emph{steepest edge}, are memory-less (c.f.
Section~\ref{sec:LPs}). Pivot rules not in this class include
\emph{Zadeh's least-entered facet} rule as well as the original
\emph{shadow vertex} rule. 

For a given LP $(P,c)$ a memory-less pivot rule is represented by its
arborescence, that is, at every vertex, the choice made by that pivot rule is
encoded in the outgoing arc of the arborescence. In particular, every
memory-less pivot rule corresponds to a choice of an arborescence for every LP
$(P,c)$.   From this perspective, for every pivot-rule, there
is a memory-less pivot rule, given by the shortest-path arborescence of the
footprint, which takes at most the same number of steps. In consequence, if
every memory-less pivot rule takes exponentially many steps, then so does
every pivot rule.

The two main questions that we address in this paper are
\begin{enumerate}[(A)]
    \item How do the arborescences vary for fixed  objective function $c$ and
        varying pivot rule?
    \item How do the arborescences vary for fixed pivot rule and
        varying  objective function $c$?
\end{enumerate}

To be able to change the pivot rules in a controlled and continuous manner, we
restrict to the following setup: For given $P \subset \R^d$ and $c \in \R^d$,
choose a \Def{normalization} $\NO : \R^d \to \R$ and a \Def{weight} $\WT \in
\R^d$. For $v \neq \vopt$, the next vertex on the simplex-path from
$v$ to $\vopt$ is 
\begin{equation}\label{eqn:NW}
    u_* \ = \ \argmax \left\{ \frac{\WT^t(u-v)}{\NO(u-v)} : u \text{ adjacent
    to $v$ and }
    c^tu > c^tv \right\} \, .
\end{equation}
A choice of $\WT$ and $\NO$ for given $(P,c)$ is called a
\Def{normalized-weight} pivot rule, or \Def{NW-rule} for short. If
$R$ is a normalized-weight pivot rule, we sometimes write $
\NO^R(P,c)$ and $\WT^R(P,c)$ to stress the dependence of $\NO$ and
$\WT$ on the LP $(P,c)$. NW-rules are memory-less pivot rules: for a fixed LP $(P,c)$ Equation \eqref{eqn:NW}
determines an arborescence $\arb$, that is, a map on the vertices of $P$ with
$\arb(\vopt) = \vopt$ and $\arb(v) = u_*$
otherwise.

As we explain in the next section, several well-known pivot rules
(greatest-improvement, steepest-edge, etc.) as well as the max-slope pivot
rule, a memory-less generalization of the shadow-vertex rule (see below),
belong to that class. While NW-rules are a strict subclass of memory-less
pivot rules, we show that they are universal in the following sense.

\begin{theorem}\label{thm:NW_universality}
    For every simple polytope $P$ there is a perturbation $P'$,
    combinatorially isomorphic to $P$, such that for any memory-less pivot
    rule there is a NW-rule that produces the same arborescence for $(P',c)$
    for every $c$.
\end{theorem}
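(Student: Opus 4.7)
The plan is to take $P'$ to be a generic perturbation of $P$, combinatorially isomorphic to $P$, with the additional property that distinct directed edges of $P'$ have distinct direction vectors. First I would verify that such a perturbation exists: within the open set of vertex configurations realizing the combinatorial type of $P$, the injectivity of the edge-direction map $(v,u)\mapsto u-v$ is the complement of finitely many affine conditions $u_1-v_1=u_2-v_2$ over pairs of distinct directed edges, and is therefore dense; hence $P'$ exists.

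Next I would fix an arbitrary memory-less rule $R$ and, for each generic cost $c$, construct $(\WT^R(P',c), \NO^R(P',c))$ realizing the arborescence $\arb:=\arb^R(P',c)$ as follows. Set $\WT^R(P',c):=c$. At each non-optimal vertex $v$, with $u^*:=\arb(v)$ and $N^+(v):=\{u\in\Nb{P'}{v} : c^t u > c^t v\}$, define
\[
    \NO(u^* - v) := 1, \qquad \NO(u - v) := M \quad \text{for every } u \in N^+(v)\setminus\{u^*\},
\]
for a sufficiently large common constant $M>0$; extend $\NO$ arbitrarily (e.g.\ by $1$) on the remaining vectors of $\R^d$. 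By the injectivity secured in the first step, no vector in $\R^d$ receives conflicting values, so $\NO$ is well-defined. For each non-optimal $v$ and $u\in N^+(v)$ the ratio in \eqref{eqn:NW} equals $c^t(u^*-v)$ when $u=u^*$ and $c^t(u-v)/M$ otherwise, so choosing $M$ larger than $\max\{c^t(u-v)/c^t(u^*-v)\}$ over the finitely many non-optimal $v$ and up-neighbors $u$ forces $u^*$ to be the unique maximizer. Since the NW-rule framework permits $(\WT,\NO)$ to depend on $(P',c)$, running this construction separately for each generic $c$ yields the desired NW-rule.

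The main obstacle is the genericity step: once distinct directed edges of $P'$ carry distinct direction vectors, the normalization $\NO$ becomes a free parameter edge-by-edge, and the argmax in \eqref{eqn:NW} can be steered to any prescribed up-neighbor. The remaining work is then a direct translation, and the only secondary point to check is that NW-rules, as defined just before the theorem, indeed allow $(\WT,\NO)$ to vary with $(P,c)$, which is what lets the construction above depend on $c$.
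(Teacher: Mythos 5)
Your second step, the construction of the normalization $\NO$, is essentially the paper's Proposition~\ref{prop:edge_gen}: the paper also sets $\WT = c$, assigns $\NO = 1$ on the arborescence edge directions and a large constant $\kappa$ on the rest, and uses edge-genericity to rule out collisions. Your observation that $(\WT, \NO)$ may vary with $(P,c)$ is what lets this piece together over all generic $c$, exactly as in the paper.

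The gap is in your first step. The set of vertex configurations realizing the combinatorial type of a simple polytope is in general \emph{not} open in the space of vertex coordinates. A simple polytope of dimension at least~$3$ that is not a simplex necessarily has a non-simplex facet, and the coplanarity of that facet's vertices is a nontrivial constraint: an arbitrarily small perturbation of the vertex set of a $3$-cube, for example, immediately breaks its quadrilateral facets into pairs of triangles and changes the combinatorial type. So ``the complement of finitely many affine conditions, and is therefore dense'' does not follow; you would still need to show that the conditions $u_1 - v_1 = u_2 - v_2$ cut the (positive-codimension) realization space properly, and no such argument is supplied.

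The paper side-steps this by perturbing the \emph{facet} description: write $P = \{x : Ax \le b\}$; then for any $A'$ and sufficiently small $\eps > 0$ the polytope $P' = \{x : (A + \eps A')x \le b\}$ is combinatorially isomorphic to $P$, because for a \emph{simple} polytope openness does hold in the space of facet data. One then checks that for generic $A'$ the polytope $P'$ is edge-generic: each edge of a simple polytope is the intersection of exactly $d-1$ facets, so its direction is the kernel of a distinct $(d-1)\times d$ submatrix of $A + \eps A'$. With this replacement, the rest of your argument goes through.
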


If $(P,c)$ is a non-degenerate LP, then $(P',c)$ has the same optimal basis
and we may assume that $(P,c)$ is already sufficiently generic.  Hence,
NW-rules are essentially all we need to study to understand memory-less rules.
Furthermore, by our earlier argument, memory-less rules are essentially all we
need to study to understand all pivot rules. We may put these observations
together in the following Corollary.

\begin{cor}
    If there is a pivot rule for which the simplex method takes polynomially
    many steps on every LP, then there is an NW-rule that takes takes
    polynomially many steps on every LP.
\end{cor}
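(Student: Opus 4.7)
The plan is to chain together the two reductions that the paper has already set up: (i) from arbitrary pivot rules to memory-less pivot rules via the shortest-path-arborescence of a footprint, which is sketched just before questions (A) and (B), and (ii) from memory-less pivot rules to NW-rules via Theorem \ref{thm:NW_universality}. Each reduction preserves the number of simplex pivots instance by instance, so a polynomial bound on step count is carried through.

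Concretely, I would fix a pivot rule $R$ for which the simplex method terminates within $p(n,d)$ steps on every LP, $p$ a polynomial. For each LP $(P,c)$, form the footprint $F = F_R(P,c)$ and define a memory-less pivot rule $R_1$ as follows: at every non-optimal vertex $v$, choose the first edge on some shortest directed path from $v$ to $\vopt$ inside $F$. Since any shortest path in $F$ is no longer than the longest $R$-path in $F$, the rule $R_1$ also takes at most $p(n,d)$ pivots on $(P,c)$. This is precisely the reduction already observed in the introduction.

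Next I would apply Theorem \ref{thm:NW_universality} instance by instance. By the remark following the theorem, I may assume $P$ is simple and $(P,c)$ is non-degenerate, since replacing $(P,c)$ by a sufficiently close combinatorially isomorphic perturbation $(P',c)$ preserves the optimal basis and the number of pivots. Theorem \ref{thm:NW_universality} then provides, for this $P'$, a choice of $\NO$ and $\WT$ whose associated NW-rule produces the same arborescence on $(P',c)$ as $R_1$ does on $(P,c)$. In particular, the $c$-monotone path from any starting vertex to $\vopt$ has the same length under both rules, so this NW-rule also uses at most $p(n,d)$ pivots on $(P,c)$. Aggregating the instance-wise choices of $(\NO,\WT)$ into a single map $(P,c)\mapsto(\NO,\WT)$ defines an NW-rule $R_2$ with the desired property.

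The only substantive obstacle is the perturbation step: one must check that replacing $(P,c)$ by $(P',c)$ does not alter the relevant complexity. For non-degenerate LPs this is immediate from the cited remark, because the combinatorial isomorphism identifies arborescences and monotone paths of $(P,c)$ with those of $(P',c)$. For degenerate instances the standard lexicographic/perturbation machinery for the simplex method applies, reducing the analysis to the simple, non-degenerate case already handled by Theorem \ref{thm:NW_universality}. Apart from this bookkeeping, the corollary is a direct concatenation of the two reductions above.
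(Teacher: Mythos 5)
Your proposal chains exactly the two reductions the paper uses: from an arbitrary pivot rule to a memory-less one via the shortest-path arborescence of the footprint, and from a memory-less rule to an NW-rule via Theorem~\ref{thm:NW_universality} together with the perturbation remark that follows it. This is precisely the argument the paper intends when it says ``we may put these observations together,'' just spelled out in more detail (including the degenerate case, which the paper also waves away to standard perturbation machinery).
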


We can continuously change the pivot rule by varying the weight $\WT$.  We call
an arborescence that arises via~\eqref{eqn:NW} for a fixed weight $\WT$ a
\Def{coherent} arborescence and write $\arb = \arb_{P,c}^\NO(\WT)$. This
terminology underlines the proximity to the theory of coherent monotone
paths~\cite{BKS, BSFiberPoly} (see below).

An answer to question (A) is provided by the following theorem. For a polytope
$Q \subseteq \R^d$ and $w \in \R^d$, we write $Q^\WT = \{ x \in Q : \WT^tx \ge
\WT^ty, y \in Q\}$ to denote the face that maximizes $x \mapsto \WT^t x$.

\begin{theorem}\label{thm:PP}
    Let $(P,c)$ be a linear program and $\NO$ a normalization. There
    is a polytope \mbox{$\PP_{P,c}^\NO \subset \R^d$}, called the \Def{pivot rule polytope}
    of $(P,c)$ and $\NO$,
    such that the following holds: For any generic weights $\WT,\WT'$ 
    \[
        \arb_{P,c}^\NO(\WT) \ = \ \arb_{P,c}^\NO(\WT') 
        \quad \Longleftrightarrow \quad
        (\PP_{P,c}^\NO)^\WT \ = \ (\PP_{P,c}^\NO)^{\WT'} \, .
    \]
\end{theorem}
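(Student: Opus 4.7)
The plan is to realize $\PP_{P,c}^\NO$ as a Minkowski sum of local polytopes, one per non-optimal vertex of $P$, in analogy with the fiber-polytope construction of Billera--Sturmfels that produces monotone path polytopes. For each $v \neq \vopt$, let
\[
    N^+(v) \ := \ \{u \in \Nb{P}{v} : c^t u > c^t v\}
\]
be the set of improving neighbors, which is nonempty because $v$ is not $c$-optimal. Assuming (as is implicit in~\eqref{eqn:NW} being well-defined) that $\NO(u-v) > 0$ on improving edges, set
\[
    z_{v,u} \ := \ \frac{u - v}{\NO(u-v)}, \qquad u \in N^+(v),
\]
and define the \emph{local choice polytope}
\[
    Q_v \ := \ \conv\{z_{v,u} : u \in N^+(v)\} \ \subseteq \ \R^d.
\]
By~\eqref{eqn:NW}, the NW-rule picks $\arb_{P,c}^\NO(\WT)(v)$ as the improving neighbor maximizing $\WT^t z_{v,u}$, so $(Q_v)^\WT$ is the single vertex $z_{v,\arb(v)}$ whenever $\WT$ is generic at $v$.

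I would then define
\[
    \PP_{P,c}^\NO \ := \ \sum_{v \neq \vopt} Q_v,
\]
the Minkowski sum over the non-optimal vertices of $P$. The standard face formula for Minkowski sums yields $(\PP_{P,c}^\NO)^\WT = \sum_{v\neq\vopt} (Q_v)^\WT$ for every $\WT$. In particular, for generic $\WT$ each summand $(Q_v)^\WT$ is a vertex, so $(\PP_{P,c}^\NO)^\WT$ is a vertex of the Minkowski sum whose Minkowski decomposition $\sum_v z_{v,\arb(v)}$ records precisely the arborescence $\arb_{P,c}^\NO(\WT)$.

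For the equivalence, the forward implication is immediate from the face formula: if $(Q_v)^\WT = (Q_v)^{\WT'}$ for every $v$, summing gives $(\PP_{P,c}^\NO)^\WT = (\PP_{P,c}^\NO)^{\WT'}$. The converse uses the uniqueness of vertex-decompositions in Minkowski sums: if $p := (\PP_{P,c}^\NO)^\WT = (\PP_{P,c}^\NO)^{\WT'}$, then writing $p = \sum_v q_v$ with $q_v \in Q_v$ the identity $\WT^t p = \sum_v \WT^t q_v \le \sum_v \max_{Q_v} \WT^t x = \WT^t p$ forces each $q_v \in (Q_v)^\WT$, and genericity of $\WT$ then forces $q_v = (Q_v)^\WT$ uniquely; applying this to the decomposition given by $\WT'$ yields $(Q_v)^{\WT'} = (Q_v)^\WT$ for every $v$. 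I expect this uniqueness-of-decomposition argument to be the main technical step, though it is standard Minkowski-sum theory; the other subtlety is to verify that the genericity assumption required in~\eqref{eqn:NW} at each vertex matches the genericity needed to identify faces of $\PP_{P,c}^\NO$ with vertices, which follows from the fact that the normal fan of $\PP_{P,c}^\NO$ is the common refinement of the normal fans of the $Q_v$.
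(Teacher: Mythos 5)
Your proof is correct and follows essentially the same route as the paper: your local polytopes $Q_v$ are exactly the paper's $\PP_{P,c}^\NO(v)$, the Minkowski sum decomposition is the same, and the forward/backward directions both rest on the standard face formula and unique vertex decomposition for Minkowski sums. The only cosmetic difference is that you define $\PP_{P,c}^\NO$ directly as the Minkowski sum, whereas the paper first defines it via the points $\PPGKZ^\NO(\arb)$ and then observes the Minkowski sum identity.
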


Question (B) is strongly related to parametric linear programming. Whereas a
basic question there is roughly which objective functions yield the same optimum,
we will address the more subtle question which objective functions yield the
same arborescence. We make two assumptions on the NW-rule $R$, namely that
$\NO^R(P,c)$ is independent of $c$ and that $\WT^R(P,c) = c$. Thus, for a
fixed normalization function $\NO$, we will write $\br_{P}^\NO(c) :=
\arb_{P,c}^\NO(c)$ for the arborescence of $(P,c)$ obtained
from~\eqref{eqn:NW} with respect to $\NO$ and weight $\WT=c$. We show 
that the collection of arborescences $\br_{P}^\NO(c)$ is governed by 
another polytope.

\begin{theorem}\label{thm:NP}
    Let $P \subset \R^d$ be a polytope and $\NO$ a normalization.
    There is a polytope $\NP[\NO]{P} \subset \R^d$, called the \Def{neighbotope}
    of $P$ and $\NO$, such that the following holds: For any generic objective
    functions $c,c' \in \R^d$
    \[
        \br_{P}^\NO(c) \ = \ \br_{P}^\NO(c') 
        \quad \Longleftrightarrow \quad
        (\NP[\NO]{P})^c \ = \ (\NP[\NO]{P})^{c'} \, .
    \]
\end{theorem}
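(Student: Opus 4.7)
The plan is to realize $\NP[\NO]{P}$ as a Minkowski sum of small local polytopes, one for each vertex of $P$, designed so that the face picked out by $c$ in each summand encodes exactly the pivot choice (or absence of a choice) prescribed by the NW-rule at that vertex. For each vertex $v$ of $P$, set $e_{v,u} := (u-v)/\NO(u-v)$ for every neighbor $u \in \Nb{P}{v}$, and define
\[
N_v \ := \ \conv\bigl( \{e_{v,u} : u \in \Nb{P}{v}\} \cup \{0\} \bigr) \ \subset \ \R^d.
\]
Including the origin is the crucial idea: it provides a canonical ``do nothing'' vertex that will be the unique $c$-maximizer on $N_v$ exactly when $v$ is the root of the arborescence and therefore makes no outgoing choice.

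Next I verify the following face-selection property for generic $c$: the face $(N_v)^c$ is always a single vertex. If $v = \vopt(c)$, then $c^t(u-v) < 0$ for every neighbor $u$, so the maximizer is $\{0\}$; otherwise at least one $c^t e_{v,u}$ is positive and the unique maximizer is $\{e_{v,u_*}\}$, where $u_* = \br_{P}^\NO(c)(v)$ by definition of the NW-rule. Therefore $(N_v)^c = (N_v)^{c'}$ for every $v$ if and only if $\vopt(c) = \vopt(c')$ and the chosen outgoing arc agrees at every non-optimal vertex, which is precisely $\br_{P}^\NO(c) = \br_{P}^\NO(c')$. Setting $\NP[\NO]{P} := \sum_{v \in \mathrm{vert}(P)} N_v$ and invoking the standard fact that the normal fan of a Minkowski sum is the common refinement of the normal fans of its summands, one obtains $(\NP[\NO]{P})^c = \sum_v (N_v)^c$, so equality of these faces for two weights reduces to equality of every summand, and the theorem follows.

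The main obstacle is the root handling: the arborescence prescribes no outgoing choice at $\vopt$, yet $\vopt$ itself depends on $c$, so each local polytope must simultaneously be prepared to record both ``I am the root under $c$'' and ``I have chosen a specific neighbor'' in a way compatible across all weights. The $\{0\}$-augmentation resolves this uniformly, but one must carefully verify that the origin is the unique $c$-maximizer on $N_v$ exactly when every incident edge is $c$-decreasing, i.e.\ exactly when $v = \vopt(c)$; this needs only the mild assumption that $\NO$ is positive on all nonzero edge directions of $P$.
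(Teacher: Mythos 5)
Your proof is correct and coincides with the paper's argument: your local polytope $N_v = \conv(\{(u-v)/\NO(u-v) : u \in \Nb{P}{v}\} \cup \{0\})$ is exactly the paper's $\NP[\NO]{P}(v)$ from~\eqref{eqn:Nhood}, with your explicit $\{0\}$ playing the role of the term $u=v$ (declared to give $0$), and the Minkowski-sum decomposition plus uniqueness of the face decomposition is precisely how the paper proves the equivalence. The only cosmetic difference is that the paper first \emph{defines} $\NP[\NO]{P}$ in~\eqref{eqn:N} as the convex hull of $\PPGKZ^\NO(\br)$ over all arborescences and then identifies it with $\sum_v \NP[\NO]{P}(v)$ by discarding non-arborescence maps, whereas you take the Minkowski sum as the definition directly, which suffices for the existence claim.
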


We prove Theorems \ref{thm:PP} and \ref{thm:NP} in Section \ref{sec:PP+NP}. 

We describe the face structure of pivot rule polytopes and neighbotopes in
terms of \emph{multi-arborescences} and discuss the relation to general
arborescences of LPs that were studied and enumerated by Athanasiadis et al.\
in~\cite{enumerative}.  In particular, we give bounds on the number of
coherent arborescences; see Section \ref{sec:facesPP+NP}.

As a memory-less version of the shadow-vertex pivot rule
we introduce the \Def{max-slope} (\Def{MS}) pivot rule:
For given LP $(P,c)$ choose $\NO^{\MS}(u-v) = c^t(u-v)$ and $\WT^{\MS} \in \R^d$ generic and linearly independent of $c$. Thus the resulting arborescence $\arb$ satisfies
\begin{equation}\label{eqn:MSintro}
    \arb(v) \ = \ \argmax \left\{ \frac{\WT^t(u-v)}{c^t(u-v)} : u \text{ adjacent
    to $v$ and }
    c^tu > c^tv \right\} \, ,
\end{equation}
for $v \not= \vopt$.

Let $r=P^\WT$ be the vertex selected by $\WT$, the unique path in the
arborescence $\arb$ above, starting at $r$, is precisely the path followed by
the shadow-vertex pivot rule (see Proposition~\ref{prop:shadow_path}). Let
$\vmopt$ be the vertex of $P$ minimizing $c$. The unique path in the
arborescence $\arb$ starting at $\vmopt$ passes through $r$. It is the
\emph{coherent monotone path} of $(P,c)$ with respect to $\WT$ in the sense of
\cite{BSFiberPoly}. For varying $\WT$ the resulting coherent monotone paths are
parametrized by the vertices of the \Def{monotone path polytope} $\Mon_c (P)$.
Obviously the arborescence contains more information than just the monotone path
from $\vmopt$. This refinement can be seen geometrically in terms of Minkowski
sums (see Section \ref{sec:max_slope}).

\begin{theorem} \label{thm:MS_Mon}
    Let $P \subset \R^d$ be a polytope and $c$ a generic objective function.
    Then the monotone path polytope $\Mon_c(P)$ is a 
    weak Minkowski summand of the max-slope pivot rule polytope $\PP_{P,c}^\MS$.
    If $P$ is a zonotope, then $\Mon_c(P)$ is normally equivalent to $\PP_{P,c}^\MS$.
\end{theorem}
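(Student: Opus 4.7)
The plan is to reduce both statements to normal fan refinements, using the observation recorded just before the theorem that the unique path in $\arb_{P,c}^\MS(\WT)$ starting at $\vmopt$ is the coherent $c$-monotone path $\gamma(\WT)$ determined by $\WT$. Recall that $Q$ is a weak Minkowski summand of $P$ exactly when the normal fan of $P$ refines that of $Q$, and $P,Q$ are normally equivalent precisely when these fans coincide.

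First I would establish the weak Minkowski summand assertion as follows. Since the path in $\arb_{P,c}^\MS(\WT)$ from $\vmopt$ is $\gamma(\WT)$, the assignment $\WT\mapsto\arb_{P,c}^\MS(\WT)$ refines $\WT\mapsto\gamma(\WT)$: any two generic weights $\WT,\WT'$ yielding the same arborescence certainly yield the same coherent monotone path. By Theorem~\ref{thm:PP}, the open cells $\{\WT:\arb_{P,c}^\MS(\WT)=\arb\}$ are the relative interiors of the normal cones of $\PP_{P,c}^\MS$. By the fiber polytope construction of Billera--Sturmfels, the analogous cells $\{\WT:\gamma(\WT)=\gamma\}$ are the relative interiors of the normal cones of $\Mon_c(P)$. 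The refinement above therefore says that the normal fan of $\PP_{P,c}^\MS$ refines that of $\Mon_c(P)$, proving the first assertion.

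For the zonotope case I would exploit translation symmetry of edges. Write $P=\sum_{i=1}^m [0,z_i]$ and restrict to the subset of generators with $c^tz_i>0$. At any vertex $v$, every outgoing edge in the $c$-orientation is a translate of some $z_i$, so the max-slope ratio $\WT^t(u-v)/c^t(u-v)$ equals $\WT^tz_i/c^tz_i$, independent of $v$. Hence $\arb_{P,c}^\MS(\WT)$ is determined entirely by the linear order on the $z_i$ induced by the ratios $\WT^tz_i/c^tz_i$. On the other hand, the coherent monotone path of a zonotope traverses the generators in exactly that same linear order, so $\gamma(\WT)$ is determined by and determines the same ordering. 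The two subdivisions of weight space coincide, and the polytopes are normally equivalent.

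The main obstacle is the careful identification of the Billera--Sturmfels normal-fan description of $\Mon_c(P)$ with the cell structure $\{\WT:\gamma(\WT)=\gamma\}$ along non-maximal faces, and the verification that the generic assumptions on $\WT$ and $c$ are strong enough that the ratio quantities in~\eqref{eqn:MSintro} are all well defined and distinct on the relevant edges; once the maximal cells agree, agreement on lower-dimensional cells can be obtained by taking closures and using the fact that both polytopes are known to be polytopes (so their normal fans are complete).
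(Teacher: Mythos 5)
Your first claim — that $\Mon_c(P)$ is a weak Minkowski summand of $\PP_{P,c}^\MS$ — is proved essentially as the paper does it: Shephard's criterion is precisely the normal-fan-refinement statement you use, and the key fact is that the path in the arborescence starting at $\vmopt$ is the coherent monotone path (the paper establishes this via Proposition~\ref{prop:shadow_path}, stitching together the two shadow paths through $r = P^\WT$). So the two arguments coincide modulo which formulation of Shephard's theorem one cites.

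For the zonotope case you take a genuinely different route. The paper's proof invokes Billera--Sturmfels (Lemma~2.3 and Theorem~4.1 of~\cite{BSFiberPoly}) to identify $\Mon_c(P)$ up to normal equivalence with an explicit zonotope $\tilde{\Mon}_c(P) = \sum_{i<j}[z_i - z_j, z_j - z_i]$, decomposes $\PP_{P,c}^\MS$ into the Minkowski summands $\PP_{P,c}^\MS(v)$, and checks that each summand is a weak summand of $\tilde{\Mon}_c(P)$ by reading off the $\WT$-maximizer from the permutation ordering the generators. You bypass the BS computation entirely and argue directly at the level of the two cell decompositions of weight space, which is cleaner and closer in spirit to Theorem~\ref{thm:NP_sweep}. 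What the paper's route buys is that it explicitly exhibits the monotone path polytope of a zonotope as a (translate of a) zonotope; what yours buys is avoiding an external citation and treating both directions of the normal-equivalence symmetrically.

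There is, however, a real gap in your zonotope argument. You write that $\gamma(\WT)$ ``is determined by and determines the same ordering'' of the slopes $\WT^t z_i / c^t z_i$. The ``determined by'' direction is immediate, but the converse — that knowing the coherent monotone path recovers the full linear order on all $n$ slopes — requires the fact that a $c$-monotone path in a zonotope from $\vmopt$ to $\vopt$ traverses \emph{every} generator direction exactly once (equivalently, it is a gallery crossing each hyperplane of the dual arrangement exactly once). Without this, $\gamma(\WT)$ could be a path using only a proper subset of the $z_i$, and it would not determine the ordering needed to pin down $\arb(\WT)$; the implication $\gamma(\WT) = \gamma(\WT') \Rightarrow \arb_{P,c}^\MS(\WT) = \arb_{P,c}^\MS(\WT')$ would fail. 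The fact is true and easy to supply, but it is precisely the step that makes the ``same linear order'' claim correct, and it should be stated and justified. Once added, your argument closes.
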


Interestingly the construction of pivot rule polytopes is fundamentally
different from that of monotone path polytopes in \cite{BSFiberPoly}. In
particular, the result gives a new way of studying monotone path polytopes of
zonotopes. In Section \ref{sec:examples}  we highlight that Stasheff's
associahedra and multiplihedra can be realized as max-slope pivot rule
polytopes.

The pivot rule polytopes for the greatest improvement pivot rule relate to yet
another important construction from geometric combinatorics going back to
classical work of Goodman and Pollack; see~\cite{GP} and references therein.
The \Def{sweep polytope} $\SP(p_1,\dots,p_n)$, introduced by Padrol and
Philippe in~\cite{Sweep}, captures the orderings of a point configuration
$p_1,\dots,p_n$ induced by varying linear functions. For a polytope $P \subset
\R^d$ and a normalization $\NO$, define the set of \Def{normalized edge
directions} $\ED^\NO(P):= \{ \frac{u-v}{\NO(u-v)} : uv \in E(P) \}$. If $c$ is
a generic objective function, then let $\ED^\NO(P,c):= \{ \frac{u-v}{\NO(u-v)}
: uv \in E(P), c^tu > c^tv \}$ the collection of normalized
\Def{$c$-improving} edge directions.

\begin{theorem}\label{thm:NP_sweep}
    Let $(P,c)$ be a linear program and $\NO$ a normalization.  Then the pivot
    rule polytope $\PP_{P,c}^\NO$ is a weak Minkowski summand of the sweep
    polytope of normalized $c$-improving edge directions $\SP(\ED^\NO(P,c))$.

    Furthermore, the neighbotope $\NP[\NO]{P}$ is a weak Minkowski summand of
    the sweep polytope of normalized edge directions $\SP(\ED^\NO(P))$.
\end{theorem}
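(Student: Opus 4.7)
My plan is to exploit the standard characterization of \emph{weak Minkowski summand}: a polytope $Q$ is a weak Minkowski summand of $R$ precisely when the normal fan of $R$ refines the normal fan of $Q$, equivalently, whenever two generic weights $w, w'$ lie in the relative interior of the same maximal cone of $\Ncone(R)$, one has $Q^w = Q^{w'}$. The defining property of the sweep polytope $\SP(X)$ of a finite configuration $X \subset \R^d$ that I would invoke is that two generic weights lie in the same maximal normal cone if and only if the functionals $x \mapsto w^t x$ and $x \mapsto (w')^t x$ induce the same total order on $X$. Combined with Theorems \ref{thm:PP} and \ref{thm:NP}, each assertion then reduces to showing that the ordering captured by the appropriate sweep polytope determines the associated arborescence.

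For the first part, I would fix the LP $(P,c)$ and take generic $\WT, \WT'$ in the same maximal cone of $\Ncone(\SP(\ED^\NO(P,c)))$, so that they induce the same linear order on $\ED^\NO(P,c)$. At every non-sink vertex $v$, the argmax in \eqref{eqn:NW} selects an element of $\ED^\NO(P,c)$ from among the $c$-improving edge directions at $v$; since restricting a common linear order to a subset preserves the maximum, $\WT$ and $\WT'$ make the same selection at each $v$. Hence $\arb_{P,c}^\NO(\WT) = \arb_{P,c}^\NO(\WT')$, and Theorem \ref{thm:PP} delivers $(\PP_{P,c}^\NO)^\WT = (\PP_{P,c}^\NO)^{\WT'}$, which is precisely the weak summand condition.

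The second part runs in parallel but with one extra wrinkle: here the weight $\WT = c$ both determines which edges are $c$-improving and serves as the maximization weight in \eqref{eqn:NW}, so the sweep ordering must also recover the $c$-orientation of each edge. For generic $c, c'$ in the same maximal cone of $\Ncone(\SP(\ED^\NO(P)))$, I would use that both orientations of every edge $uv \in E(P)$ appear in $\ED^\NO(P)$ as $(u-v)/\NO(u-v)$ and $(v-u)/\NO(v-u)$; the comparison of these opposite directions under the common weight agrees with the sign of $c^t(u-v)$ (since the normalization is positive), so $c$ and $c'$ induce the same orientation on every edge of $P$. Consequently they agree on the set of $c$-improving outgoing edges at each vertex, and the argmax argument from the first part again yields $\br_P^\NO(c) = \br_P^\NO(c')$. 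Theorem \ref{thm:NP} then upgrades this to $(\NP[\NO]{P})^c = (\NP[\NO]{P})^{c'}$.

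The main obstacle I anticipate is exactly this entangled double role of $c$ in the second part. The key insight resolving it is that $\SP(\ED^\NO(P))$ is built from \emph{both} orientations of each edge, so the relative position of a direction and its opposite within the common total order supplies exactly the sign information needed to detect $c$-improvement. Aside from that subtlety, the proof amounts to a clean translation between the orderings captured by sweep polytopes and the pointwise argmax description of NW-arborescences.
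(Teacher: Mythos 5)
Your proof is correct and follows essentially the same route as the paper: both arguments reduce to the observation that a generic weight inducing a total sweep order on $\ED^\NO(P,c)$ (resp.\ $\ED^\NO(P)$) determines the argmax at every vertex, hence the arborescence, and hence the face of $\PP_{P,c}^\NO$ (resp.\ $\NP[\NO]{P}$) via Theorem~\ref{thm:PP} (resp.\ Theorem~\ref{thm:NP}). The paper invokes Shephard's vertex criterion where you invoke the equivalent normal-fan-refinement characterization, and you explicitly resolve the ``double role of $c$'' in the second statement by comparing opposite directions in $\ED^\NO(P)$ --- a point the paper dispatches with ``the second claim follows in the same manner.''
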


\newcommand\RoSy{\Phi}%
\newcommand\Pos{\RoSy^+}%
\newcommand\Simp{\Delta}%
We show that in a particularly interesting case the neighbotope and the sweep
polytope of edge directions are normally equivalent. Let us write $\ED(P) =
\ED^1(P)$ for the unnormalized edge directions.  If $\RoSy \subset \R^n$ is an
irreducible crystallographic root system, then we associate to it the
\Def{Coxeter zonotope} $Z_\RoSy = \frac{1}{2} \sum_{\alpha \in \RoSy}
[-\alpha,\alpha]$. It is easy to see that $\ED(Z_\RoSy) = \RoSy$.

\begin{theorem}\label{thm:Coxeter-Zonotope}
    Let $\RoSy$ be an irreducible crystallographic root system with Coxeter
    zonotope $Z_\RoSy$. Then the greatest-improvement neighbotope
    $\NP[\GI]{Z_\RoSy}$ is normally equivalent to $\SP(\ED(Z_\RoSy)) =
    \SP(\RoSy)$.
\end{theorem}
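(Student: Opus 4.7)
The plan is to establish normal equivalence by comparing the walls of the two normal fans. By Theorem~\ref{thm:NP_sweep}, $\NP[\GI]{Z_\RoSy}$ is already a weak Minkowski summand of $\SP(\RoSy)$, so the normal fan of $\SP(\RoSy)$ refines that of $\NP[\GI]{Z_\RoSy}$. It remains to prove the reverse refinement: every hyperplane bounding a maximal cone of the sweep normal fan also bounds a maximal cone of the neighbotope normal fan.

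First I would describe the walls of both fans concretely. The vertices of $Z_\RoSy$ correspond bijectively to the chambers of the Coxeter arrangement of $\RoSy$, and at the vertex for chamber $C$ the outgoing edges of $Z_\RoSy$ point in the directions of the simple roots of $C$. Hence $\br_{Z_\RoSy}^{\GI}(c)$ is determined, chamber by chamber, by the $\argmax$ of $c^t\alpha$ over the simple roots $\alpha$ of $C$ satisfying $c^t\alpha > 0$. This forces the walls of the normal fan of $\NP[\GI]{Z_\RoSy}$ to lie in one of two families: (a) the Coxeter hyperplanes $\gamma^\perp$, $\gamma \in \RoSy$, arising when a simple root enters or leaves the improving set (every root is simple at some chamber); and (b) the hyperplanes $(\alpha - \beta)^\perp$ for $\alpha, \beta$ simple roots of a common chamber of $\RoSy$, arising when the $\argmax$ swaps. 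A short check---choose $c$ on the given hyperplane with an appropriate local sign pattern at the relevant chamber---shows every hyperplane in (a) or (b) is in fact a wall. On the sweep side, following~\cite{Sweep}, the normal fan of $\SP(\RoSy)$ is the refinement by the hyperplanes $(\alpha-\beta)^\perp$ for distinct $\alpha, \beta \in \RoSy$.

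The core of the proof is thus the following root-system lemma: for every pair of distinct $\alpha, \beta \in \RoSy$, the hyperplane $(\alpha-\beta)^\perp$ belongs to family (a) or (b). To prove it, split on the rank-$2$ subsystem $\RoSy' := \RoSy \cap \span(\alpha,\beta)$. If $\beta = -\alpha$, then $(\alpha-\beta)^\perp = \alpha^\perp$ is of type~(a). If $\alpha, \beta$ are linearly independent and form a simple system of $\RoSy'$, a perturbation argument promotes them to simple roots of a common chamber of $\RoSy$: write $c = c_1 + \varepsilon c_2$ with $c_1 \in H^\perp$ generic (where $H := \span(\alpha,\beta)$) and $c_2 \in H$ chosen so that $\langle \alpha, c_2\rangle, \langle \beta, c_2\rangle > 0$; for small $\varepsilon > 0$, any decomposition $\alpha = \gamma_1 + \gamma_2$ into roots positive on $c$ yields a sign contradiction on the $H^\perp$-components of the $\gamma_i$, so $\alpha$ (and symmetrically $\beta$) is indecomposable, i.e., simple at the chamber of $c$. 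In the remaining case---$\alpha, \beta$ linearly independent but not the simple roots of $\RoSy'$---the rank-$2$ classification says $\RoSy'$ is one of $A_1 \times A_1, A_2, B_2, G_2$, and an inspection of each system shows that every non-opposite, non-simple pair has difference parallel to a root of $\RoSy'$, placing $(\alpha-\beta)^\perp$ back in family (a).

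The principal technical obstacle is the rank-$2$ case analysis, which is most intricate for $G_2$: there the non-simple, non-opposite pairs split into four angle classes ($30^\circ, 60^\circ, 90^\circ, 120^\circ$, mixing short and long roots), each requiring an explicit identification of the difference with a root direction (for instance, a short--long pair at $30^\circ$ has difference equal to a short root, and a short--short pair at $120^\circ$ has difference equal to a long root). Once the lemma is proved, the wall sets of the two normal fans coincide; since both fans are central and determined by their walls, the fans themselves agree, and $\NP[\GI]{Z_\RoSy}$ is normally equivalent to $\SP(\RoSy)$.
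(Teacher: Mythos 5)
Your proposal has a genuine gap at the final step.  You correctly identify the Minkowski-sum description of the summands, correctly compute the two families of hyperplanes that can support walls of $\mathcal{N}(\NP[\GI]{Z_\RoSy})$, and your perturbation argument and rank-$2$ inspection (including $G_2$) are sound for what they prove.  The problem is that your lemma is strictly weaker than what normal equivalence requires, and the closing sentence ``the wall sets of the two normal fans coincide; since both fans are \dots determined by their walls, the fans themselves agree'' is where the argument breaks.

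Concretely: $\mathcal{N}(\SP(\RoSy))$ \emph{is} the central arrangement of all hyperplanes $(\alpha-\beta)^\perp$, but $\mathcal{N}(\NP[\GI]{Z_\RoSy})$ is a priori only a coarsening of it (each summand $\NP[\GI]{Z_\RoSy}(v)$ is a simplex, not a segment, so its normal fan is not an arrangement).  Thus a wall of the coarser fan is a union of $(d-1)$-cells of the arrangement, not a full hyperplane, and it is not enough to know that every supporting hyperplane carries \emph{some} wall of the neighbotope fan; one must show that \emph{every} $(d-1)$-cell of the arrangement survives as a wall.  Restricting by $W$-equivariance to the open dominant chamber, this means: for every $c$ there with $c^t\alpha=c^t\beta$ for a single incomparable pair $\alpha,\beta\in\Pos$, some summand face $\NP[\GI]{Z_\RoSy}(v)^c$ must be positive-dimensional.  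The tie happens at the vertex $v$ whose chamber $C$ has $\alpha,\beta\in -\Simp_C$ \emph{only if} $c^t\alpha=c^t\beta$ is at least $c^t\gamma$ for every other improving simple direction $\gamma$ at $v$.  Your perturbation argument produces \emph{some} chamber where $\alpha,\beta$ are simple, but it gives no control over the remaining simple roots' signs relative to the \emph{given} $c$, so the argmax at that vertex may be achieved at a third simple root and the face may still be a vertex.  Put differently, ``$\alpha,\beta$ simple at a common chamber'' is weaker than ``$\alpha,\beta$ are the \emph{only} positive simple roots of some chamber,'' and it is the latter that is needed and that the paper isolates as Theorem~\ref{thm:comp_roots}.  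Your ``short check'' verifies that a well-chosen point of $(\alpha-\beta)^\perp$ lies on a wall; it does not verify that an arbitrary generic point of that hyperplane (in the dominant chamber) does.

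For comparison, the paper's route is to reduce normal equivalence exactly to Theorem~\ref{thm:comp_roots}: for incomparable $\alpha,\beta\in\Pos$ there is $w\in W$ with $w\Simp\cap\Pos=\{\alpha,\beta\}$; then at the vertex $v=ww_0\vopt$ the improving directions are precisely $\{\alpha,\beta\}$ so the tie is automatic for \emph{any} $c$ with $c^t\alpha=c^t\beta$.  That theorem is then proved case-by-case (explicit signed permutations for $A,B,C,D$ and computer verification for $F_4,E_6,E_7,E_8$).  Your observation that incomparable pairs with $(\alpha-\beta)^\perp$ not a Coxeter hyperplane form a simple system of the rank-$2$ subsystem is a correct and useful intermediate fact, but closing the gap would require showing that the chamber produced by your perturbation can be chosen so that all its other simple roots are negative for $c$ -- and making that uniform seems to recreate essentially the difficulty of Theorem~\ref{thm:comp_roots}.
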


The proof relies on a result (Theorem~\ref{thm:comp_roots}) on irreducible
crystallographic root systems that is of independent interest: for every pair
$\alpha, \beta$ of elements that are incomparable in the root poset of $\Phi$
there is a simple system $\Delta \subseteq \Phi$ whose only positive roots are
$\alpha$ and $\beta$.

We give several of examples of pivot rule polytopes and neighbotopes in
Section \ref{sec:examples} but defer a detailed discussion to the forthcoming
paper~\cite{PivPoly2}.

From Equation \eqref{eqn:NW} one can see that arborescences for NW-rules in
general are obtained by local greedy choices. For the greatest improvement
pivot rule we show in Section \ref{sec:genie} that its arborescences can be
derived from a basic combinatorial optimization problem, we named the
\textsc{Max Potential Energy Branching}. This problem has the structure of a
polymatroid and can be solved by the greedy algorithm. We explain the
associated polytope in detail, which also justifies the name ``neighbotope''.

\textbf{Acknowledgements.} The first and second author were supported by the
NSF through the NSF graduate research fellowship program and grant
DMS-1818969. They are also grateful for the wonderful hospitality of the
Goethe-Universit\"at Frankfurt where this paper was written. We are grateful
for comments and suggestions from Samuel Fiorini, Martin Skutella, Laura
Sanit\`a, and Christian Stump.

\section{LPs, pivot rules, and arborescences}\label{sec:LPs}

Let $P \subseteq \R^d$ be a fixed polytope. We will denote by $V(P)$ the
\Def{vertex set} of $P$ and by \mbox{$G(P) = (V(P),E(P))$} the \Def{graph} of $P$. A
linear function $c \in \R^d$ is \Def{(edge) generic} if $c^tu \neq c^tv$ for
all edges $uv \in E(P)$. Every generic linear function $c$ induces an acyclic
orientation on $G(P)$ by orienting $v \to u$ if $c^tu > c^tv$. The directed
graph has a unique sink $\vopt$ and, in fact, every subgraph of a face will have a
unique sink. Such an orientation is called a \Def{unique sink orientation} and
we call $(P,c)$ a \Def{linear program}. For a vertex $v \in V(P)$, we write
$\Nb{P}{v} := \{ u : uv \in E(P) \}$ for the \Def{neighbors} of $v$ in $G(P)$
and we write $\Nb{P,c}{v} := \{ u \in \Nb{P}{v} : c^tu > c^tv \}$ for the
\Def{$c$-improving} neighbors.

A \Def{$c$-arborescence} of $P$ is a map $\arb : V(P) \to V(P)$ satisfying
$\arb(v) = v$ if and only if $v = \vopt$ and $\arb(v) \in \Nb{P,c}{v}$ for all
$v \in V(P) \setminus \vopt$. For a memory-less pivot rule, the choice of the
neighboring vertex $u_* \in \Nb{P,c}{v}$ for $v \neq \vopt$ results in a
$c$-arborescence $\arb$, which captures the behavior of the pivot rule on the
linear program $(P,c)$. Arborescences of polytopes have appeared as oracles that allow geometric enumeration output-sensitive algorithms \cite{AVISFUKUDA-1996}. 

For a given normalization $\NO$ and weight $\WT$,~\eqref{eqn:NW} determines 
an arborescence $\arb = \arb_{P,c}^\NO(\WT)$ given by
\begin{equation}\label{eqn:NWarb}
    \arb(v) \ := \ 
    \argmax \left\{ \frac{\WT^t(u-v)}{\NO(u-v)} : u \in \Nb{P,c}{v} 
    \right\} 
\end{equation}
for $v \neq \vopt$ and $\arb(\vopt) := \vopt$.

The following well-known and important pivot rules belong to the class of
NW-rules (this requires the assumption that $P$ is a simple polyhedron):
\begin{itemize}[]
    \item \Def{Greatest improvement} (\Def{GI}): choose $\WT^{\GI} = c$ and
        $\NO^{GI}(u-v) \equiv 1$;
    \item \Def{$p$-Steepest edge} (\Def{pSE}): choose $\WT^{\pSE} = c$ and
        $\NO^{\pSE}(u-v) = \|u-v\|_p$ for some fixed $p \ge 1$;
    \item \Def{Max-slope} (\Def{MS}): choose $\NO^{\MS}(u-v) = c^t(u-v)$ and
        $\WT^{\MS}$ linearly independent of $c$.
\end{itemize}
The max-slope rule is a memory-less version of the shadow-vertex rule, that we
will treat in depth in Section~\ref{sec:max_slope}. Figure~\ref{fig:tetra} shows the
six arborescences of the tetrahedron including the five arborescences obtained
from the max-slope rule. 
\begin{figure}[H]
    \centering
    \includegraphics[width=8cm]{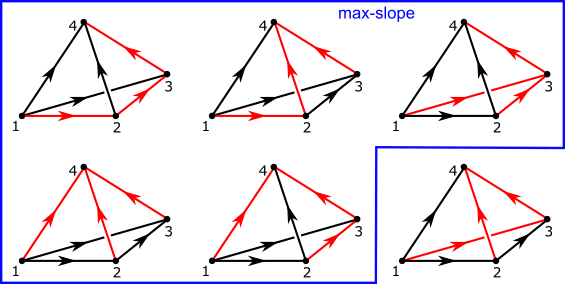}
    \caption{The arborescences of the tetrahedron.}
    \label{fig:tetra}
\end{figure}
It turns out that all $3!=6$ arborescences of the tetrahedron can be obtained
from a NW-rule for a suitable choice of a normalization. However, this is not
true in general. 

Indeed, the arborescence in middle of Figure~\ref{fig:cube_arbs} cannot be obtained from an NW-rule. Observe that any NW-rule makes a choice based only on the set of edge directions 
\[
    D_{P,c}(v) \ := \ \{ u-v : u \in \Nb{P,c}{v} \} \, .
\]
Hence, if two vertices $v,v' \in V(P)$ satisfy $D_{P,c}(v') \subseteq
D_{P,c}(v)$ and $\arb(v)-v \in D_{P,c}(v')$, then $\arb(v)-v = \arb(v') - v'$.
So, the choice of the improving neighbor for $v$ forces the improving neighbor
for $v'$ to be the same. The middle arborescence of Figure~\ref{fig:cube_arbs} violates this constraint.

We call a polytope $P$ \Def{edge-generic} if $u-v \neq u'-v'$ for any two
distinct edges $uv,u'v' \in E(P)$.

\begin{prop}\label{prop:edge_gen}
    Let $P$ be an edge-generic polytope and $c$ a generic objective function.
    For any $c$-arborescence $\arb$ there is a normalization $\NO$ and a weight
    $\WT$ such that $\arb = \arb_{P,c}^\NO(\WT)$.
\end{prop}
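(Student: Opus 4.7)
The plan is to take $\WT := c$ and to build a normalization $\NO$ that forces, at every non-optimal vertex $v$, the ratio in \eqref{eqn:NWarb} to be maximized exactly at $\arb(v)$. Since $\NO$ is a single function on $\R^d$, and not data attached to oriented edges of $P$, the construction must be consistent globally. Edge-genericity removes the only obstruction: the map sending a directed improving edge to its direction vector, $(v,u) \mapsto u-v$ (with $uv \in E(P)$ and $c^t u > c^t v$), is injective, so each such vector $d$ points at a unique vertex $v$ from which $d$ departs to a unique improving neighbor $u$.

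Using this, fix a large parameter $M > 1$ to be determined and define
$$
    \NO(d) \ := \
    \begin{cases}
        1 & \text{if } d = \arb(v)-v \text{ for some non-optimal } v, \\
        M & \text{if } d = u-v \text{ for some improving edge } uv \text{ with } u \neq \arb(v), \\
        1 & \text{otherwise.}
    \end{cases}
$$
By the injectivity observed above, the first two cases are disjoint: if a vector $d$ fell into both then edge-genericity would identify the underlying directed edges, forcing $u = \arb(v)$, a contradiction. Hence $\NO$ is unambiguously defined, and the third case absorbs every vector irrelevant to \eqref{eqn:NWarb}.

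To verify that $\arb = \arb_{P,c}^{\NO}(c)$, fix a non-optimal $v$ and a competing $u \in \Nb{P,c}{v} \setminus \{\arb(v)\}$. The two relevant ratios are
$$
    \frac{c^t(\arb(v)-v)}{\NO(\arb(v)-v)} \ = \ c^t(\arb(v)-v)
    \quad\text{and}\quad
    \frac{c^t(u-v)}{\NO(u-v)} \ = \ \frac{c^t(u-v)}{M},
$$
and both numerators are strictly positive because $c$ is generic and $u, \arb(v) \in \Nb{P,c}{v}$. Since $V(P)$ is finite, one can choose $M$ uniformly larger than the maximum of $c^t(u-v)/c^t(\arb(v)-v)$ over all non-optimal $v$ and all improving $u$, which secures the strict inequality simultaneously at every $v$. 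The only conceptual subtlety is the consistency point flagged at the start: without edge-genericity, two distinct edges could share a direction vector and force $\NO$ to arbitrate contradictory preferences, which is precisely what obstructs the middle arborescence in Figure~\ref{fig:cube_arbs} from coming from any NW-rule, and which motivates the perturbation in Theorem~\ref{thm:NW_universality}.
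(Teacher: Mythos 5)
Your proof is correct and follows essentially the same path as the paper's: take $\WT = c$, assign normalization $1$ to the arborescence edge directions and a large constant to the other improving edge directions (the paper writes $\kappa \gg 0$ where you write $M$), and invoke edge-genericity to ensure the normalization is well-defined. The only cosmetic difference is that you spell out the uniform choice of $M$ over the finitely many competing ratios, which the paper leaves implicit.
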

\begin{proof}
    It follows from edge-genericity that $D_{P,c}(v) \cap D_{P,c}(v') =
    \emptyset$ for all $v \neq v'$. Define the normalization $\NO : \R^d \to
    \R$ by $\NO(\arb(v)-v) := 1$ for all $v \in V(P)$ and $\NO(x) := \kappa$
    for $x \not\in \{ \arb(v) - v : v \in V(P) \}$ and some
    sufficiently large constant $\kappa \gg 0$. For $\WT := c$, we then get
    for all $v \neq \vopt$
    \[
        \arb(v) \ = \ \argmax \left\{ \frac{c^t(u-v)}{\NO(u-v)} : u \in
        \Nb{P,c}{v}  \right\} 
    \]
    and hence $\arb = \arb_{P,c}^\NO(\WT)$.
\end{proof}

\begin{proof}[Proof of Theorem~\ref{thm:NW_universality}]
    Let $P$ be a simple polytope given by $P = \{ x : Ax \le b \}$ for some
    matrix $A$ and vector $b$. Simplicity implies that for every $A'$ there
    is an $\eps > 0$ such that $P' := \{ x : (A+\eps A')x \le b\}$
    is combinatorially isomorphic to $P$. It is straightforward to verify that
    for a sufficiently general $A'$, the polytope $P'$ is edge-generic. 
    Let $\arb_1,\dots,\arb_s$ be the $c$-arborescences produced by the given
    memory-less pivot rule and let $c_1,\dots,c_s$ objective functions
    such that $\arb_i$ was produced for $(P',c_i)$. Let $\NO_i$ be the
    normalization of Proposition~\ref{prop:edge_gen} and let $R$ be the
    NW-rule with $\WT^R(P',c_i) = c_i$ and $\NO^R(P',c_i) = \NO_i$. It follows
    from Proposition~\ref{prop:edge_gen} that $\arb^R_{P',c_i} =
    \arb_{P',c_i}^{\NO_i}(c) = \arb_i$, which proves the claim.
\end{proof}

\section{Two constructions: Proof of Existence Theorems \ref{thm:PP} and \ref{thm:NP}} \label{sec:PP+NP}

We prove the existence of the two polytopes parametrizing NW pivot rules.
They correspond to Theorems \ref{thm:PP} and \ref{thm:NP}.

\subsection{Pivot rule polytopes} \label{sec:pivot_polytopes}

Let $(P,c)$ be a fixed linear program and $\NO$ a normalization. In this
section we prove Theorem~\ref{thm:PP}, which provides a complete answer to question (A) from the introduction: 

\begin{center}
    \it How does the arborescence of a memory-less pivot rule change when the weight $\WT$ changes? 
\end{center}

For an arborescence $\arb$ of $G(P)$ we define 
\begin{equation}\label{eqn:PPGKZ}
    \PPGKZ^{\NO}(\arb) \ := \ \sum_{v} \frac{\arb(v) - v}{\NO(\arb(v) -v)} \,,
\end{equation}
where we tacitly declare $\frac{0}{\NO(0)} = 0$. The \Def{pivot polytope} of
$(P,c)$ and a fixed normalization $\NO$ is defined as
\begin{equation}\label{eqn:PP}
    \PP_{P,c}^\NO \ := \ \conv \{ \PPGKZ^{\NO}(\arb) : \arb \text{
        $c$-arborescence of $(P,c)$} \} \, .
\end{equation}

We remind the reader that for $\WT$, the arborescence of $(P,c)$ determined
by~\eqref{eqn:NW} is denoted by $\arb_{P,c}^\NO(\WT)$. We can now prove
Theorem~\ref{thm:PP}. Recall that for polytopes $P_1 = \conv(V_1)$ and
\mbox{$P_2 =
\conv(V_2)$}, the \Def{Minkowski sum} is the polytope
\[
    P_1+P_2 \ = \  \{ p_1 + p_2 : p_1 \in P_1, p_2 \in P_2 \} \ = \ 
    \conv( v_1 + v_2 : v_1 \in V_1, v_2 \in V_2 ) \, .
\]

\begin{proof}[Proof of Theorem~\ref{thm:PP}]
    For a vertex $v \neq \vopt$ define
    \begin{equation}\label{eqn:PPv}
        \PP_{P,c}^\NO(v) \ := \ \conv \left\{ \frac{u-v}{\NO(u-v)} : u \in
        \Nb{P,c}{v} \right\} \, .
    \end{equation}
    It follows from the definition of Minkowski sums that
    \begin{equation}\label{eqn:PP_sum}
        \PP_{P,c}^\NO \ = \ \sum_{v \neq \vopt} \PP_{P,c}^\NO(v) \, .
    \end{equation}
    For a generic weight $\WT \in \R^d$ note that 
    \[
        (\PP_{P,c}^\NO)^\WT \ = \ \sum_{v \neq \vopt} (\PP_{P,c}^\NO(v))^\WT \,
        .
    \]
    Hence $(\PP_{P,c}^\NO)^\WT$ is a vertex if and only if
    $(\PP_{P,c}^\NO(v))^\WT$ is a vertex for all $v \neq \vopt$. Now,
    $\frac{u_*-v}{\NO(u_*-v)}$ is this vertex if and only if 
    $\frac{\WT^t(u_*-v)}{\NO(u_*-v)} > \frac{\WT^t(u-v)}{\NO(u-v)}$ for all
    $u \in \Nb{P,c}{v} \setminus u_*$. Set $\arb(v) := u_*$ and $\arb(\vopt) :=
    \vopt$. It now follows from~\eqref{eqn:NW} that $\PPGKZ^\NO(\arb) =
    (\PP_{P,c}^\NO)^\WT$ if and only if $\arb = \arb_{P,c}^\NO(\WT)$, which
    proves the claim.
\end{proof}

\subsection{Neighbotopes}\label{sec:neighbotopes}

Let $P \subset \R^d$ be a polytope and $c$ a generic objective function that
induces a unique sink orientation on the graph $G(P)$ with optimum $\vopt$. The
question of basic \emph{parametric linear programming} is for which objective
functions $c'$ will $\vopt$ be the sink. Geometrically, this is given by the
interior of the normal cone $\Ncone_P(\vopt) = \{ y : y^t \vopt > y^t u
\text{ for all } u \in \Nb{P}{\vopt} \}$. The collection $\Ncone_P = \{
\Ncone_P(v) : v \in V(P) \}$ gives rise to the \Def{normal fan} of $P$, whose
cones give a conical subdivision of $\R^d$.

A more refined question is which $c'$ yield the same unique sink orientation
as $c$. Obviously $c'$ has to satisfy $(c')^t u > (c')^t v$ for all edges $uv
\in E(P)$ such that $c^tu > c^tv$, which defines the interior of a polyhedral
cone. The collection of these cones for varying $c$ again yield a fan
structure, that is the normal fan of a polytope. To be precise, we define the
\Def{edge zonotope} (or \Def{EZ-tope})
\[
    \EZ(P) \ := \ \sum_{uv \in E(P)} [u-v,v-u]
\]
and it is straightforward to verify that the vertices of $\EZ(P)$ are in
bijection to unique sink orientations induced by objective functions. The
EZ-tope was introduced by Gritzmann--Sturmfels~\cite{GS} under the name
\emph{edgotope}.

We address our second primary question of the paper:
\begin{center}
    \emph{Given a fixed $P$ and NW-rule, how does the arborescence change when
    $c$ is varied?}
\end{center}

We will answer it but make the following assumption on the NW-rule $R$:
\begin{compactenum}[\rm i)]
    \item The normalization function does not depend on $c$: $\NO^R(P,c) =
        \NO^R(P,c')$ for all $c,c'$;
    \item The rule $R$ chooses $c$ as the weight: $\WT^R(P,c) = c$.
\end{compactenum}

For example, greatest-improvement as well as $p$-steepest-edge belong to this
class but max-slope with normalization $\NO^\MS(u-v) = c^t(u-v)$ does not. To
stress the two requirements above, we write $\br_{P}^\NO(c) :=
\arb_{P,c}^\NO(c)$ for the arborescence obtained from the linear program
$(P,c)$ with respect to the NW-rule with normalization $\NO$ and weight $\WT =
c$. If $\br = \br_{P}^\NO(c)$, we note that for all $v \in V(P)$ 
\begin{equation}\label{eqn:Uarb}
    \br(v) \ = \ \argmax \left\{ \frac{c^t(u-v)}{\NO(u-v)} : u \in
    N(P,v) \cup \{v\} \right\} \, .
\end{equation}
Indeed, let us denote by $\vopt$ the unique sink of $P$ with respect to $c$.
For $v \neq \vopt$ there is a neighbor $u \in \Nb{P}{v}$ with $c^tu > c^tv$ and
the right-hand side of~\eqref{eqn:Uarb} coincidences with~\eqref{eqn:NW}.
If $v = \vopt$, then the maximum is attained at $u = v$ and we get $\br(v) = v$.

Let $G(P)$ be the undirected graph of $P$. An \Def{arborescence} on $G(P)$
is a map $\br : V(P) \to V(P)$ such that 
\begin{compactenum}[(a)]
    \item there is a unique $\vopt \in V(P)$ with $\br(\vopt) = \vopt$,
    \item for all $v \neq \vopt$ we have $\br(v) \in \Nb{P}{v}$, and
    \item for all $v$ there is $k \ge 1$ such that $\br^k(v) = \vopt$.
\end{compactenum}
In particular, every $c$-arborescence of $(P,c)$ is an arborescence;
cf.~Section~\ref{sec:LPs}.

Consistently, we define for an arborescence $\br$
\[
    \PPGKZ^{\NO}(\br) \ := \ \sum_{v} \frac{\br(v) - v}{\NO(\br(v)
    -v)} \, ,
\]
and we define the \Def{neighbotope} of $P$ for the normalization $\NO$ 
\begin{equation}\label{eqn:N}
    \NP[\NO]{P} \ := \ \conv \{ \PPGKZ_P^\NO(\br) : \br \text{ arborescence of
        $G(P)$} \} \, .
\end{equation}
Let us emphasize that the neighbotope is defined in terms of \emph{all}
arborescences of the undirected graph $G(P)$.

\begin{proof}[Proof of Theorem~\ref{thm:NP}]
    The proof is along similar lines as that of Theorem~\ref{thm:PP}. For a
    vertex $v \in V(P)$ we define
    \begin{equation}\label{eqn:Nhood}
        \NP[\NO]{P}(v) \ := \ \conv \left\{ \frac{u-v}{\NO(u-v)} : u \in
        \Nb{P}{v} \cup \{v\} \right\} \, .
    \end{equation}
    Let $c$ be a generic objective function and let $\br = \br_{P}^\NO(c)$.
    For $v \in V(P)$ it follows directly from~\eqref{eqn:Uarb} that 
    \[
        \NP[\NO]{P}(v)^c = \frac{u-v}{\NO(u-v)} \quad \text{ if and only if }
        \quad \br(v) = u \, .
    \]
    Hence the coherent arborescences are precisely the vertices of 
    \[
        Q := \sum_{v \in V(P)} \NP[\NO]{P}(v) \, ,
    \]
    which is the convex hull over all $\PPGKZ^\NO(f)$ where $f$ ranges over all
    maps $f : V(P) \to V(P)$ with $f(v) \in \Nb{P}{v}$ for all $v \in V(P)$.
    However, the above argument shows that we can discard those $f$ that are
    not arborescences of $G(P)$ and hence $Q = \NP[\NO]{P}$ as claimed.
\end{proof}

The structural similarity between pivot polytopes and neighbotopes can be made
more precise.

\begin{cor}\label{cor:PP_NP}
    Let $P \subset \R^d$ be a polytope, and let $\NO$ be a normalization.  Then the
    neighbotope $\NP[\NO]{P}$ is given by
    \[
        \NP[\NO]{P} \ = \  \conv\left(\bigcup_{c \in \R^{n}}
        \PP_{P,c}^{\NO}\right) \, .
    \]
\end{cor}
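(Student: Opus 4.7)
The plan is to establish the equality via two set-theoretic inclusions, both of which follow quickly from the definitions together with Theorem~\ref{thm:NP}.

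For the inclusion $\conv\bigl(\bigcup_{c}\PP_{P,c}^{\NO}\bigr) \subseteq \NP[\NO]{P}$, I would first fix any generic $c \in \R^d$ and observe that every $c$-arborescence $\arb$ of $(P,c)$ satisfies the three axioms (a)--(c) of an arborescence of $G(P)$: condition (a) holds because $\vopt$ is the unique sink of the $c$-orientation, (b) is immediate since $\Nb{P,c}{v} \subseteq \Nb{P}{v}$, and (c) follows because the directed paths in the $c$-orientation terminate at $\vopt$. Hence every defining point $\PPGKZ^\NO(\arb)$ of $\PP_{P,c}^\NO$ is also among the defining points of $\NP[\NO]{P}$, giving $\PP_{P,c}^\NO \subseteq \NP[\NO]{P}$ for each $c$, and convex hulls preserve this inclusion.

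For the reverse inclusion $\NP[\NO]{P} \subseteq \conv\bigl(\bigcup_c \PP_{P,c}^\NO\bigr)$, it suffices to show that every vertex of $\NP[\NO]{P}$ lies in some $\PP_{P,c}^\NO$. By Theorem~\ref{thm:NP} each such vertex is of the form $\PPGKZ^\NO(\br_P^\NO(c))$ for a generic objective function $c$. The key verification is that the coherent arborescence $\br := \br_P^\NO(c)$ is in fact a $c$-arborescence of $(P,c)$: for $v \neq \vopt$ there exists a $c$-improving neighbor, so the right-hand side of~\eqref{eqn:Uarb} is attained at some $u \in \Nb{P,c}{v}$, matching~\eqref{eqn:NWarb} with weight $\WT = c$. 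Therefore $\br = \arb_{P,c}^\NO(c)$ and $\PPGKZ^\NO(\br)$ is (by the proof of Theorem~\ref{thm:PP}) the vertex $(\PP_{P,c}^\NO)^c$, which certainly belongs to $\bigcup_{c'}\PP_{P,c'}^\NO$. Writing $\NP[\NO]{P}$ as the convex hull of its vertices completes the inclusion.

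The only step that demands care is the claim that the argmax in~\eqref{eqn:Uarb} is genuinely a $c$-improving neighbor when $v \neq \vopt$; this was already observed after~\eqref{eqn:Uarb} in the paper and uses that $\NO$ is positive on $c$-improving directions so that non-improving candidates give non-positive ratios while some improving candidate gives a strictly positive ratio. Beyond this bookkeeping, the corollary is essentially definitional, with Theorem~\ref{thm:NP} providing the bridge needed to match vertices of $\NP[\NO]{P}$ to vertices of pivot rule polytopes.
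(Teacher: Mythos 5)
Your proof is correct and fills in the argument the paper leaves implicit (the corollary is stated without proof, immediately following Theorems~\ref{thm:PP} and \ref{thm:NP}): both inclusions reduce to the observations that every $c$-arborescence is an arborescence of $G(P)$, and that the coherent arborescence $\br_P^\NO(c)$ is itself a $c$-arborescence, exactly as you argue. One small imprecision in your final paragraph: to rule out non-improving candidates in~\eqref{eqn:Uarb} you need $\NO$ to be positive on \emph{all} edge directions, not merely the $c$-improving ones (a negative $\NO$ on a non-improving direction would make that ratio positive), which is the standing assumption the paper implicitly makes.
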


So far we have presented two constructions, which help classify and organize
all pivot rules of a linear program. We will now present some examples to
illustrate the construction and, at the same time, highlights the  incredibly
rich combinatorics that the constructions bring to light.

\section{Examples of pivot rule polytopes and neighbotopes}\label{sec:examples}

Let us begin with three examples that illustrate the richness of pivot rule polytopes:

\newcommand\1{\mathbf{1}}%
\begin{example}[$\GI$- and $\pSE$-Pivot polytopes of simplices]\label{ex:simplex1}
    Let $\Delta_{d-1} = \conv( e_1,\dots, e_d) \subset \R^d$ be the standard
    $d$-simplex. An objective function $c$ is generic for $\Delta_{d-1}$ if
    and only if $c_i \neq c_j$ for all $i \neq j$. Up to symmetry, we may
    assume that $c_1 < c_2 < \cdots < c_d$. Observe that $\NO^{\pSE}(e_i -
    e_j) = ||e_{i} - e_{j}||_{p} = 2^{1/p}$ for all $i \neq j \in [n]$, which
    implies that the pivot rule polytopes for the greatest-improvement and
    $p$-steepest-edge normalizations are the same up to scaling.  Thus, it
    suffices to focus on the greatest-improvement normalization $\NO^\GI
    \equiv 1$.
    
    An arborescence of $(\Delta_{d-1},c)$ can be identified with a map $\arb :
    [d] \to [d]$ with $\arb(d) = d$ and $\arb(j) > j$ for all $j < d$.  There
    are precisely $(d-1)!$ arborescences, since there are $d-j$ independent
    choices of an outgoing edge for each $j$. However, not all of these
    arborescences will necessarily arise from $\GI$-rules. To characterize
    those that do, choose $\WT \in \R^d$ such that $\WT_i \neq \WT_j$ for all
    $i \neq j$. We can associate to $\WT$ the permutation $\sigma$ such that $
    \WT_{\sigma^{-1}(1)} < \WT_{\sigma^{-1}(2)} < \cdots <
    \WT_{\sigma^{-1}(d)}$. This permutation uniquely identifies the
    arborescence in the sense that $\WT'$ yields the same coherent
    arborescence as $\WT$ if and only if $ \WT'_{\sigma^{-1}(1)} <
    \WT'_{\sigma^{-1}(2)} < \cdots < \WT'_{\sigma^{-1}(d)}$.
    A \Def{left-to-right maximum} of $\sigma =
    (\sigma_1,\sigma_2,\dots,\sigma_d)$ is an index $j$ such that $\sigma_i <
    \sigma_j$ for all $i < j$. Let $1 \le j_1 < j_2 < \cdots < j_k \le d$ be
    the positions of left-to-right maxima.  It follows from~\eqref{eqn:NW}
    that the coherent arborescence with respect to $\WT$ is given by $\arb(i)
    = j_s$ if $j_{s-1} \le i < j_s$, where we set $j_0 = 0$. Although all
    possible subsets of $[d]$ can occur as positions of left-to-right maxima,
    the position $1$ is never relevant.  Therefore, there are exactly
    $2^{d-2}$ coherent arborescences.
    
    For an arborescence $\arb$, let $\delta_i := |\arb^{-1}(i)|$, so that 
    $\delta(\arb) = (\delta_1,\dots,\delta_d)$ is the \Def{in-degree sequence}
    of $\arb$. It now follows from~\eqref{eqn:PPGKZ} that
    $\PPGKZ^\GI(\arb) = \delta - \1_{[d-1]}$ and hence
    \[
        \PP_{\Delta_{d-1},c}^\GI + \1_{[d-1]} \ = \ \conv\{ \delta(\arb) :
        \arb \text{ arborescence of } (\Delta_{d-1},c) \} \, .
    \]
    From~\eqref{eqn:PP_sum}, we infer that 
    \[
        \PP_{\Delta_{d-1},c}^\GI + \1_{[d-1]} \ = \ \sum_{i=1}^{d-1} \conv\{
            e_{i+1},\dots,e_{d} \} \, .
    \]
    Following the exposition~\cite[Sect.~8.5]{PostPerm}, this shows that
    $\PP_{\Delta_{d-1},c}^\GI$ is the \emph{Pitman--Stanley
    polytope}~\cite{PitmanStanley}.
\end{example}

\begin{example}[Pivot rule polytopes of cubes]\label{ex:cubes}
    Let $C_d = [0,1]^d$ be the $d$-dimensional standard cube. Up to
    symmetry, we can assume that a generic objective function $c$ satisfies
    $0 < c_1 < \cdots < c_d$.  We can identify vertices of $C_d$ with
    characteristic vectors $\1_J \subseteq \{0,1\}^d$ for $J \subseteq [d]$.
    In particular, $\Nb{C_d,c}{\1_J} = \{ \1_{J \cup k} : k \not \in J \}$ and
    for $u \in \Nb{C_d,c}{\1_J}$, we have $u - \1_J = e_k$ for some $k \not
    \in J$. This again shows that the pivot polytopes for greatest improvement
    and $p$-steepest-edge are identical. For the max-slope normalization, it
    follows from~\eqref{eqn:PPGKZ} that $\PP_{C_d,c}^\MS$ is linearly
    isomorphic to $\PP_{C_d,c}^\GI$ with respect to the linear transformation
    $x \mapsto \mathrm{diag}(c_1,\dots,c_d)x$. Thus, we only consider the
    pivot polytope for greatest improvement.

    An arborescence can be identified with a map $\arb : 2^{[d]} \to
    2^{[d]}$ with $\arb([d]) = [d]$ and $\arb(J) = J \cup \{i\}$ for
    some $i \in [d] \setminus J$. Since all choices are independent,
    the total number of arborescences is
    \[
        \prod_{J} 2^{d - |J|} \ = \ 
        \prod_{i=0}^d  (2^i)^{\binom{d}{i}} \ = \ 2^{d\cdot 2^{d-1}} \, .
    \]
    Let $\WT \in \R^d$ be a generic weight. We can again assume that there is
    a unique permutation $\sigma$ such that $\WT_{\sigma^{-1}(1)} <
    \WT_{\sigma^{-1}(2)} < \cdots < \WT_{\sigma^{-1}(d)}$. The corresponding
    coherent arborescence $\arb$ then satisfies that $\arb(J) = J\cup k$
    whenever $\sigma(k) > \sigma(h)$ for all $h \not\in J \cup k$. To see that
    every such arborescence $\arb$ determines a unique permutation $\sigma$,
    we set $\sigma(d) := \arb(\emptyset)$ and $\sigma(k) :=
    \arb(\{\sigma(k+1), \dots, \sigma(d)\})$ for $1 \le k < d$. This
    establishes a bijection between $d$-permutations $\sigma$ and coherent
    arborescences $\arb_\sigma$ of $(C_d,c)$ for the greatest improvement
    normalization. If $\sigma = (1,2,\dots,d)$, then
    $\PPGKZ^\GI(\arb_\sigma)_k$ is the number of proper subsets
    $J\subseteq [d]$ such that $\max([d]\setminus J) = k$. Thus
    $\PPGKZ^\GI(\arb_\sigma) = (1,2,\dots,2^{d-1})$. For any other
    permutation $\sigma'$ one observes that $\PPGKZ^\GI(\arb_{\sigma'})
    = \sigma'(\PPGKZ^\GI(\arb_\sigma))$. Hence
    \[
        \PP_{C_d,c}^\GI \ = \ \conv \{ (
        2^{\sigma(1)-1},
        2^{\sigma(2)-1},
        \dots,
        2^{\sigma(d)-1}) : \sigma \text{ $d$-permutation} \} \, 
    \]
    is a \Def{permutahedron}; cf.~\cite{PostPerm}. The pivot polytope for
    $C_3$ together with the corresponding arborescences is depicted in
    Figure~\ref{fig:cube}.
    \begin{figure}[h]
        \centering
        \includegraphics[scale=.9]{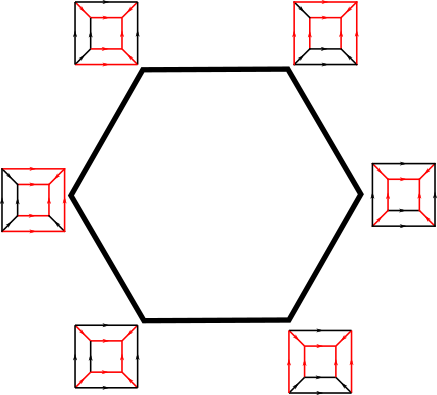}
       \caption{The pivot polytope of $[0,1]^3$ with 
        arborescences associated to the vertices.}
      \label{fig:cube}
   \end{figure}
    We will see a stronger relation in Section~\ref{sec:max_slope}.
\end{example}

Now we present another rich example

\begin{example}[Max-slope pivot polytopes of simplices]\label{ex:simplex_MS}
    Let $P$ be $(d-1)$-dimensional simplex and $c$ a generic linear function.
    We briefly sketch the pivot polytope of $(P,c)$ associated to the
    max-slope normalization $\NO^\MS(u-v) = c^t(u-v)$ and defer the reader
    to~\cite{PivPoly2} for details. Let $v_1,\dots,v_d$ be the vertices of $P$
    labeled such that $c^tv_i < c^tv_j$ if and only if $i < j$. As in
    Example~\ref{ex:simplex1}, we identify an arborescence with a map $\arb :
    [d] \to [d]$ with $\arb(d) = d$ and $\arb(i) > i$ for $i < d$. We call
    such an arborescence \Def{non-crossing} if there are no $i < j$ with
    $j < \arb(i) < \arb(j)$. We show in~\cite{PivPoly2} that an arborescence is
    coherent if and only if it is non-crossing.

    It is easy to see that non-crossing arborescences are in bijection to
    binary trees on $d-1$ nodes. If $d = 1$, then there is a unique
    arborescence that we map to the empty binary tree. For $d > 1$, let $i$ be
    minimal with $\arb(i) = d$. Let $\arb_L : [i] \to [i]$ be defined by
    $\arb_L(a) = \arb(a)$ for $a < i$ and $\arb_L(i) = i$. Further, define
    $\arb_R : [d-i] \to [d-i]$ by $\arb_R(a) := \arb(i+a)-i $. Then $\arb_L$
    and $\arb_R$ are non-crossing arborescences on fewer nodes that yield the
    left and right subtrees of the binary tree associated to $\arb$. Binary
    trees can be equipped with a natural partial order and the resulting poset
    is called the \Def{associahedron}. 

    It is a famous result due to Milnor (unpublished), Haiman (unpublished),
    and Lee~\cite{Lee} that the associahedron is isomorphic to the face
    lattice of a convex polytope. We further show in~\cite{PivPoly2} that
    $\PP_{P,c}^\MS$ is combinatorially isomorphic to the associahedron.
\end{example}

\begin{example}[Max-slope pivot polytopes of prisms over
    simplices]\label{ex:prism_simplex_MS}
    \newcommand\Multip{\mathcal{J}}%
    The associahedron was originally introduced as the poset of partial
    bracketings of a product of $n$ elements in a non-associative
    multiplicative structure. Stasheff's \Def{multiplihedron}
    $\Multip_n$ extends this to the following setup; see~\cite{Sta}. Let
    $f : \mathbf{A} \to \mathbf{B}$ be a morphism between two
    non-associative multiplicative structures. For elements
    $a_1,a_2,\dots,a_{n} \in \mathbf{A}$. What the multiplihedron roughly
    encodes is the possible ways of (partially) evaluating $f(a_1a_2 \cdots
    a_{n})$. Figure~\ref{fig:multiplihedron} gives an example for $n=3$.
    It turns out that the multiplihedron is combinatorially isomorphic to
    the max-slope pivot rule polytope for the prism over the simplex. More
    precisely, if $P = \Delta_{n-1} \times \Delta_1$ and $c = (c_1 < c_2 <
    \cdots < c_{n-1} < c_{n})$ is any linear function, then $\PP_{P,c}^\MS$
    is combinatorially isomorphic to the multiplihedron $\Multip_n$. 
    The relation between of max-slope arborescences of products of
    simplices and non-associative structures will be the main subject
    of~\cite{PivPoly2}.
\begin{figure}[h]
    \centering
    \begin{tikzpicture}
    \node[anchor=south west,inner sep=0] (image) at (0,0)
        {\includegraphics[width=8cm]{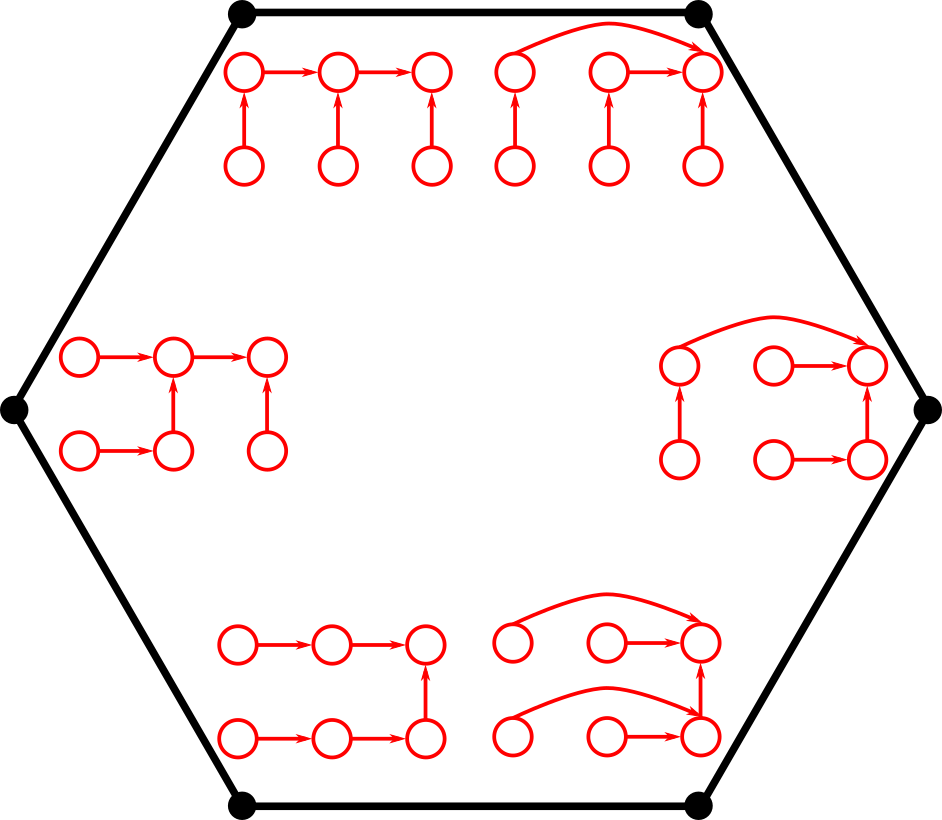}};
    \begin{scope}[x={(image.south east)},y={(image.north west)}]
        \draw[ultra thick,rounded corners] (0.36,0.74) 
        node  {$\scriptstyle (f(a)f(b))f(c)$};
        \draw[ultra thick,rounded corners] (0.65,0.74) 
        node  {$\scriptstyle f(a)(f(b)f(c))$};

        \draw[ultra thick,rounded corners] (0.19,0.38) 
        node  {$\scriptstyle f(ab)f(c)$};
        \draw[ultra thick,rounded corners] (0.82,0.38) 
        node  {$\scriptstyle f(a)f(bc)$};

        \draw[ultra thick,rounded corners] (0.36,.055) 
        node  {$\scriptstyle f((ab)c)$};
        \draw[ultra thick,rounded corners] (0.65,0.055) 
        node  {$\scriptstyle f(a(bc))$};
        \draw[ultra thick,rounded corners] (0,0) 
        node  {\ }; %
    \end{scope}
\end{tikzpicture}
    \caption{The $2$-dimensional multiplihedron $\Multip_3$ and the
    corresponding max-slope arborescences for $\Delta_2 \times \Delta_1$}
    \label{fig:multiplihedron}
\end{figure}
\end{example}

\begin{example}[Greatest-improvement neighbotope of the cube]
    \label{ex:NP_cube}
    Let $C_d = [0,1]^d$ be the unit cube. As in Example~\ref{ex:cubes}, we
    observe that the neighbotope for $\NO$ will be homothetic to
    $\NP[\GI]{C_d}$ if the normalization satisfies $\NO(\pm e_i) =
    \text{const}$ for all $i$. 

    To get the number of arborescences of the $d$-cube, we observe that every
    arborescence $\br$ is given by a spanning tree of $G(C_d)$ together with
    the choice of a root $\vopt \in V(C_d)$. The arborescence is then obtained
    by directing edges of the spanning tree towards the root. The number of
    spanning trees $\tau(C_d)$ of $C_d$ can be found
    in~\cite[Example~5.6.10]{StanleyEC2} and gives the number of arborescences
    \[
        2^n \cdot \tau(C_d) \ = \ \prod_{k=1}^d (2k)^{\binom{d}{k}} \ = \ 
        2^{2^d-1}\prod_{k=1}^d k^{\binom{d}{k}} \, .
    \]
    For a vertex $\1_J \in \{0,1\}^d$, we have
    \[
        \NP[\GI]{C_d}(\1_J) \ = \ \conv( \{ -e_i : i \in J \} \cup \{0\}
        \cup \{ e_i : i \not \in J \}) \, .
    \]
    Let $S = (\Z/2\Z)^d \cong \{-1,+1\}^d$ be the group of sign flips. Every
    element is of the form $s = \1 - 2 \1_J$ for some $J \subseteq [d]$ and
    $\NP[\GI]{C_d}(\1_J) = s \cdot \NP[\GI]{C_d}(\1_\emptyset)$. Thus
    \[
        \NP[\GI]{C_d} \ = \  \sum_{s \in S} s \cdot
        \NP[\GI]{C_d}(\1_\emptyset) \, .
    \]
    Let $W$ be the reflection group of type $B_d$, which acts on $\R^d$ by
    signed permutations. Since $\NP[\GI]{C_d}(\1_\emptyset)$ is invariant
    under permutations, $\NP[\GI]{C_d}$ is invariant with respect to $W$.
    Thus, for a given objective function $c$, we may assume that $0 < c_1 <
    \cdots < c_d$ and it follows from Corollary~\ref{cor:PP_NP} that 
    \[
        (\NP[\GI]{C_d})^c \ = \ (\PP_{P,c}^{\GI})^c \ = \
        (1,2,\dots,2^{d-1}) \, .
    \]
    This shows that $\NP[\GI]{C_d}$ is the \Def{type-B permutahedron} with
    respect to the point $(1,2,\dots,2^{d-1})$, which has $d!2^d$ vertices.
\end{example}

\begin{example}[Neighbotopes of cross-polytopes]\label{ex:NP_octahedron}
    \newcommand\Cross{C^*}%
    The $d$-dimensional \Def{cross-polytope} is the non-simple polytope
    $\Cross_d = \conv\{ \pm e_i : i=1,\dots,d\}$. For $v = se_i$ with $s \in
    \{-1,+1\}$
    \[
        \NP[\GI]{\Cross_d}(v) \ = \ \conv( \{ \pm e_j : j \neq i \} \cup
        \{se_i\}) \, ,
    \]
    which is a pyramid over $\Cross_{d-1}$. The cross-polytope is also
    invariant under the group $W$ of signed permutations. Hence, we may again
    assume that $0 < c_1 < \cdots < c_d$ and the corresponding arborescence
    $\br = \br_{\Cross_d}^\NO(c)$ satisfies $\br(v)  =  e_d$ if $v \neq -e_d$
    and $\br(-e_d) = e_{d-1}$. It follows that $\NP[\GI]{\Cross_d}$ is the
    type-$B$ permutahedron for the point $(2d-1)e_d + e_{d-1}$ and has $4
    d(d-1)$ vertices.
\end{example}

\section{The Combinatorics of Pivot rule polytopes and Neighbotopes} \label{sec:facesPP+NP}

We investigate the basic combinatorial questions on polyhedra for our
constructions and the relation to fiber polytopes and sweep polytopes.

\subsection{Faces of pivot rule polytopes}\label{sec:PP_facial}
Before we discuss general faces of pivot rule polytopes, we look at
vertices and their numbers. 

We recall that a $d$-dimensional polytope $P$ is \Def{simple} if every
vertex is incident to $d$ edges. For a simple $d$-polytope $P \subset \R^d$
and generic objective function $c$, denote by $h_i$ the number of vertices
with in-degree $i$. Since $P$ is simple, $h_i$ is independent of $c$ and
$h(P) = (h_0,\dots,h_d)$ is the \Def{$h$-vector} of $P$;
cf.~\cite[Ch.~6]{barvinok}. 

\begin{prop}[{\cite[Prop.~3.1]{enumerative}}] \label{prop:UB}
    For a simple $d$-polytope $P$ and a generic objective function, the
    total number of arborescences is given in terms of the entries of the $h$-vector by
    \[
        1^{h_1}
        2^{h_2}
        \cdots
        d^{h_d} \, .
    \]
\end{prop}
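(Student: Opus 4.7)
The plan is a direct counting argument, exploiting the fact that for a simple polytope with generic $c$ the choices defining a $c$-arborescence are completely independent. Since $P$ is simple and $c$ is edge-generic, every vertex $v \in V(P)$ is incident to exactly $d$ edges, each of which is either $c$-improving or $c$-decreasing. Write $\mathrm{up}(v) := |\Nb{P,c}{v}|$ for the number of $c$-improving neighbors, so that the out-degree of $v$ in the DAG is $\mathrm{up}(v)$ and its in-degree is $d - \mathrm{up}(v)$.

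The first step is to observe that a $c$-arborescence is the same data as a choice, for every vertex $v \neq \vopt$, of some $\arb(v) \in \Nb{P,c}{v}$, and that these choices are mutually independent. Indeed, any such assignment produces a map $\arb$ for which $c^t\arb(v) > c^tv$ at every non-sink vertex, so iterating $\arb$ must terminate at the unique sink $\vopt$; this verifies the arborescence conditions. Conversely every $c$-arborescence clearly arises this way. Consequently,
\[
    \#\{c\text{-arborescences of } (P,c)\} \ = \ \prod_{v \neq \vopt} \mathrm{up}(v).
\]

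The second step is to regroup this product by the $h$-vector. A vertex with in-degree $i$ contributes a factor of $d-i$, and the sink $\vopt$ is the unique vertex with in-degree $d$. Hence
\[
    \prod_{v \neq \vopt} \mathrm{up}(v) \ = \ \prod_{i=0}^{d-1} (d-i)^{h_i}.
\]
Reindexing $j = d - i$ and invoking the Dehn--Sommerville relations $h_i = h_{d-i}$ for simple polytopes rewrites this product as $1^{h_1} 2^{h_2} \cdots d^{h_d}$, as claimed.

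There is essentially no substantive obstacle here beyond making the independence observation precise: one must verify that every pointwise choice of an improving neighbor genuinely yields an arborescence (as opposed to introducing a cycle or a second sink), which is immediate from the strict monotonicity of $c$ along arcs. The remainder is a bookkeeping exercise with the $h$-vector symmetry.
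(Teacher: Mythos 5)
Your argument is correct. Note that the paper does not prove this proposition itself; it is cited directly from \cite{enumerative}. Your two observations are precisely the right ones: (i) a $c$-arborescence on a finite graph is nothing more than an independent choice of improving neighbor at each non-sink vertex, since the strict increase of $c$ along arcs guarantees termination at the unique sink, and (ii) with $h_i$ defined (as the paper does just before the proposition) as the number of vertices of in-degree $i$, a vertex of in-degree $i$ in a simple $d$-polytope has out-degree $d-i$, so the product over non-sink vertices is $\prod_{i=0}^{d-1}(d-i)^{h_i}$, and the Dehn--Sommerville symmetry $h_i = h_{d-i}$ converts this to $1^{h_1}2^{h_2}\cdots d^{h_d}$. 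The paper actually uses both readings of $h_i$ (by in-degree near Proposition~\ref{prop:UB}, by out-degree in the proof of Proposition~\ref{prop:Bauespos}) without comment; your explicit appeal to Dehn--Sommerville is what reconciles the two and is the one place a careless reader might slip. There is no gap.
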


We now show that this upper bound cannot be attained for coherent
arborescences, independent of the normalization.
\begin{theorem}\label{thm:PP_UB}
    Let $P \subset \R^d$ be a simple $d$-polytope with $n > d+1 \ge 4$ vertices and
    $h$-vector $h(P) = (h_0,\dots,h_d)$. For fixed objective function $c$
    and arbitrary normalization $\NO$, the number of coherent arborescences
    is strictly less than 
    \[
        1^{h_1} 2^{h_2} \cdots d^{h_d} - 2 (n-m-2) \, ,
    \]
    where $m$ is the number of facets of $P$.
\end{theorem}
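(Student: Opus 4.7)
The plan is to exhibit at least $2(n-m-2)+1$ arborescences of $(P,c)$ that are non-coherent for every normalization $\NO$; combining this with Proposition~\ref{prop:UB} yields the claimed strict bound on coherent arborescences.

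The main tool is the local obstruction recorded in Section~\ref{sec:LPs}: every NW-rule chooses $\arb(v)$ using only the set $D_{P,c}(v)$ of improving directions at $v$. Hence for $v\neq v'$ with $D_{P,c}(v')\subseteq D_{P,c}(v)$, coherence forces $\arb(v')-v'=\arb(v)-v$ whenever $\arb(v)-v\in D_{P,c}(v')$. Any arborescence that violates this forced equality is non-coherent for \emph{every} normalization, so it suffices to produce enough such violating arborescences.

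The next step is to extract forcing pairs from the combinatorics of $P$. Every 2-face $F$ of $P$ is a polygon oriented by $c$ with a unique top vertex $v_F^\top$ and a unique bottom vertex, and inside a 2-face with at least four vertices the neighbour $v'$ of $v_F^\top$ in $F$ together with $v_F^\top$ yields a forcing pair: the two independent ``swap'' arborescences that change the chosen direction at $v'$ away from the direction chosen at $v_F^\top$ both violate the equality above. Each such 2-face then contributes at least two universally non-coherent arborescences, and simplicity together with $n>d+1$ guarantees that distinct 2-faces can be arranged to yield distinct violating arborescences.

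Finally, I would use the $f$-vector relations of a simple $d$-polytope -- Euler's formula together with the Dehn--Sommerville relations -- and the hypothesis $n>d+1$ to show that the number of contributing 2-faces is at least $n-m-2$, which gives $\ge 2(n-m-2)$ non-coherent arborescences; the extra ``$+1$'' can be obtained from one further arborescence that breaks two independent forcing constraints simultaneously, which exists whenever $P$ is not a simplex. The main obstacle, and where I expect the real work to sit, is in the combinatorial bookkeeping of this last step: one has to ensure that non-coherent arborescences arising from different 2-faces remain distinct in the overall tally, and that the precise numerical lower bound is $n-m-2$ rather than something weaker. A cleaner alternative would be to route through Theorem~\ref{thm:NP_sweep}, replacing the Minkowski-sum count for $\PP_{P,c}^\NO$ by a vertex count on the sweep polytope $\SP(\ED^\NO(P,c))$ using its shared edge directions, which might produce the bound more directly.
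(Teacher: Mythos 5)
Your strategy is to exhibit a fixed collection of arborescences that are non-coherent for \emph{every} normalization $\NO$, using the local obstruction from Section~\ref{sec:LPs}: if $D_{P,c}(v')\subseteq D_{P,c}(v)$ then any NW-rule with $\arb(v)-v\in D_{P,c}(v')$ must set $\arb(v')-v'=\arb(v)-v$. This approach cannot work in the required generality. Proposition~\ref{prop:edge_gen} shows that whenever $P$ is edge-generic (all differences $u-v$ over edges are distinct) \emph{every} $c$-arborescence is coherent for a suitable choice of $\NO$, so there are no universally non-coherent arborescences at all. Edge-genericity is the generic situation — the proof of Theorem~\ref{thm:NW_universality} achieves it by an arbitrarily small perturbation — so your plan yields zero excluded arborescences exactly on a dense set of polytopes, whereas the theorem claims a deficit of $2(n-m-2)+1$ for every simple $P$ with $n>d+1\geq 4$. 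The quantifier structure of the theorem is ``for each fixed $\NO$, the coherent count is small'', and the set of arborescences excluded can and does vary with $\NO$; you have inverted this into ``there is a fixed set excluded for all $\NO$'', which is a strictly stronger and generally false assertion.

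There is also a local problem with the proposed forcing pairs. The obstruction requires a containment $D_{P,c}(v')\subseteq D_{P,c}(v)$ between the \emph{sets of improving edge directions}. Taking $v=v_F^\top$ to be the top vertex of a $2$-face $F$ and $v'$ one of its $F$-neighbours gives no such containment: $v'$ has an improving direction pointing into $F$ toward $v_F^\top$, while $v_F^\top$'s improving directions (if any) necessarily leave $F$, so neither set need contain the other. Even restricting to polytopes where direction coincidences do occur (cubes, zonotopes), the argument would only rule out arborescences on a case-by-case basis and the $f$-vector/Dehn--Sommerville bookkeeping you defer is not the easy part. The paper's actual proof avoids all of this: it fixes $\NO$, writes $\PP^\NO_{P,c}$ as a Minkowski sum of the $N=n-(m-d+1)$ positive-dimensional simplices $\PP^\NO_{P,c}(v)$, and applies Lov\'asz's colorful Helly theorem (in the spirit of the Minkowski upper bound theorem) to the normal cones $\Ncone_{\PP_i}(u_i)$: for any fixed choice of vertices from $N-(d+1)$ of the summands, not all $\prod_{j\le d+1}|V(\PP_j)|$ completions can produce genuine vertices of the sum. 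This gives a per-$\NO$ deficit that does not rely on repeated edge directions, and is the mechanism you would need to replace your ``universally non-coherent'' set. Your closing suggestion to route through Theorem~\ref{thm:NP_sweep} runs into the same issue: being a weak Minkowski summand of the sweep polytope bounds the vertex count from above only after a similar argument, so it does not bypass the colorful-Helly step.
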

\begin{proof}
    We need to bound the number of vertices of $\PP_{P,c}^\NO$. Recall
    from~\eqref{eqn:PP_sum} that $\PP_{P,c}^\NO$ is a Minkowski sum of
    polytopes $\PP_{P,c}^\NO(v)$ for $v\neq \vopt$. Since $P$ is simple, the
    polytopes $\PP_{P,c}^\NO(v)$ are all simplices of various dimensions.
    Using the interpretation of the $h$-vector given above, we see that the
    number of $(k-1)$-simplices is precisely $h_k$. In particular, we have
    $h_0 = 1$ and $h_1 = m - d$ vertices with in-degree $1$, where $m$ is
    the number of facets.  Since $P$ is not a simplex, we have $n \ge
    (d+1)(d-2) + m(d-1)$ by the Lower Bound Theorem (cf.~\cite{ziegler}). Thus
    $\PP_{P,c}^\NO$ is a Minkowski sum of $ N := n - (m-d + 1) \ge
    (d+2)(d-2) + 1 + m(d-2) \ge d+2$ simplices of positive dimension. Let
    $v_1,\dots, v_N$ be the corresponding vertices and set $\PP_i :=
    \PP_{P,c}^\NO(v_i)$.

    \newcommand\C{\mathcal{C}}%
    We slightly extend the argument from~\cite[Sect.~6]{sanyal09}: For $u
    \in V(\PP_i)$, let $\Ncone_{\PP_i}(u)$ be set of linear functions $\WT$
    such that $\{u\} = \PP_i^\WT$. This is a non-empty open polyhedral cone.
    For $u_i \in V(\PP_i)$, we have that $\sum_i u_i$ corresponds to a
    vertex of $\PP_{P,c}^\NO$ if and only if $\bigcap_i \Ncone_{\PP_i}(u_i)
    \neq \emptyset$.  Fix $u_i \in V(\PP_i)$ for $i=d+2,\dots,N$ and assume
    that for all choices of $u_j \in V(\PP_j)$ for $j =1,\dots,d+1$,
    $\sum_{i=1}^N u_i$ corresponds to a vertex. For $1 \le j \le d+1$,
    define $\C_{j}$ to be the collection of open convex sets
    \[
        \Ncone_{\PP_j}(u)  \cap 
        \Ncone_{\PP_{d+2}}(u_{d+2})  \cap 
        \cdots \cap
        \Ncone_{\PP_{N}}(u_{N})
    \]
    for $u \in V(\PP_j)$. The sets in $\C_{j}$ are pairwise disjoint.  Since $C_1 \cap \cdots \cap C_{d+1} \neq \emptyset$
    for all choices $C_j \in \C_{j}$, $j=1,\dots,d+1$, Lov\'asz'
    colorful Helly Theorem (cf.~\cite{Barany}) implies that $\bigcap_{C \in
    \C_{j}} C \neq \emptyset$ for some $j$, which yields a
    contradiction. There are at least $2(N-(d+1))$ choices of vertices
    $u_{d+2},\dots,u_{N}$, which finishes the proof. 
\end{proof}

A precise but more involved bound can be obtained from the Minkowski Upper
Bound Theorem~\cite{MUBT}. If $P$ is a $d$-dimensional simplex, then $h(P) =
(1,\dots,1)$.  Neither Theorem~\ref{thm:PP_UB} nor the Minkowski Upper Bound
Theorem rules out the possibility, that $P$ has $ 1^{h_1} 2^{h_2} \cdots
d^{h_d} = d!$ many coherent arborescences.

\begin{quest}
    For every $d \ge 1$, is there a normalization $\NO$ such that all
    arborescences of $(\Delta_{d},c)$ are coherent?
\end{quest}

In the rest of the section we discuss the facial structure of the pivot
polytope $\PP_{P,c}^\NO$ for the LP $(P,c)$ and a normalization $\NO$. For
this we assume that $P$ is a \emph{simple} polytope.  A
\Def{$c$-multi-arborescence} is a map $\arb : V(P) \to 2^{V(P)} \setminus
\{\emptyset\}$ that satisfies $\arb(\vopt) = \{\vopt\}$ and $\arb(v) \subseteq
\Nb{P,c}{v}$ for all $v \neq \vopt$. We will abuse notation and identify
one-element subsets of $V(P)$ with the elements themselves. Hence we can view
$c$-arborescences as $c$-multi-arborescences with $|\arb(v)| = 1$ for all $v
\in V(P)$. If $\WT \in \R^d$ is a non-generic weight, then the maximum
in~\eqref{eqn:NW} may not be uniquely attained for all $v$ and gives rise to
\Def{coherent} $c$-multi-arborescence that we will also denote by
$\arb_{P,c}^\NO(\WT)$.

Given two multi-arborescences $\arb, \arb'$, we say that $\arb$ \Def{refines}
$\arb'$, written $\arb \preceq \arb'$, if $\arb(v) \subseteq \arb'(v)$ for all
$v \in V(P)$. This is a partial order on the collection of multi-arborescences
of $(P,c)$. The proof of Theorem~\ref{thm:PP} yields the facial structure of
pivot polytopes.

\begin{theorem}\label{thm:PP_faces}
    Let $P \subset \R^d$ be a simple polytope, $c$ a generic objective
    function, and $\NO$ a normalization. For two weights $\WT,\WT'$ we have
    \[
        \arb(\WT) \ \preceq \ \arb(\WT')  
        \quad \Longleftrightarrow \quad
        (\PP_{P,c}^\NO)^\WT \ \subseteq \ (\PP_{P,c}^\NO)^{\WT'} \, .
    \]
    Thus, the poset of coherent arborescences is isomorphic to the face
    lattice of $\PP_{P,c}^\NO$.
\end{theorem}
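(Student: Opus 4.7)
The plan is to refine the proof of Theorem~\ref{thm:PP} by promoting the vertex-level argument to a face-level argument, using the Minkowski decomposition~\eqref{eqn:PP_sum} together with the simplicity of $P$. Since $P$ is simple, at every vertex $v$ the edge vectors $\{u-v : u \in \Nb{P}{v}\}$ are linearly independent, so the normalized $c$-improving edge vectors in~\eqref{eqn:PPv} are affinely independent. Consequently $\PP_{P,c}^\NO(v)$ is a simplex whose nonempty faces are in inclusion-preserving bijection with nonempty subsets of $\Nb{P,c}{v}$.

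Next I would invoke the standard fact that taking the $\WT$-maximal face distributes over Minkowski sums, so that
\[
    (\PP_{P,c}^\NO)^\WT \ = \ \sum_{v \neq \vopt}(\PP_{P,c}^\NO(v))^\WT.
\]
For any (not necessarily generic) weight $\WT$, the simplex face $(\PP_{P,c}^\NO(v))^\WT$ is exactly the convex hull of $\{(u-v)/\NO(u-v) : u \in S_v\}$, where $S_v \subseteq \Nb{P,c}{v}$ is the set of vertices attaining the maximum of $\WT^t(u-v)/\NO(u-v)$. By the very definition of a coherent multi-arborescence, $S_v = \arb_{P,c}^\NO(\WT)(v)$ for $v \neq \vopt$, while $\arb(\vopt) = \{\vopt\}$ contributes trivially.

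The equivalence then follows from two observations. First, the normal cone of a face $\sum_v F_v$ in a Minkowski sum equals $\bigcap_v \Ncone_{\PP_{P,c}^\NO(v)}(F_v)$, so the face containment $(\PP_{P,c}^\NO)^\WT \subseteq (\PP_{P,c}^\NO)^{\WT'}$ is equivalent to $(\PP_{P,c}^\NO(v))^\WT \subseteq (\PP_{P,c}^\NO(v))^{\WT'}$ for every $v \neq \vopt$. Second, for simplices, face containment coincides with vertex-set containment, so at each $v$ this condition reads $S_v \subseteq S_v'$; taking the conjunction over $v$ recovers exactly the refinement relation $\arb_{P,c}^\NO(\WT) \preceq \arb_{P,c}^\NO(\WT')$. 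The assignment sending a coherent multi-arborescence to the corresponding face is thus an order-preserving bijection, and since both sides are posets under inclusion this upgrades to the claimed isomorphism of face lattices. The main point to watch will be the interplay between the normal fans of the summands and of the Minkowski sum: one must verify that distinct compatible tuples $(F_v)$ of summand-faces yield distinct faces of $\PP_{P,c}^\NO$, but this is precisely what the normal-cone intersection formula guarantees, so no accidental coincidences occur.
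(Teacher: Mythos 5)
Your proof is correct and takes essentially the same approach the paper intends: the paper dismisses the proof with the remark that ``the proof of Theorem~\ref{thm:PP} yields the facial structure,'' and you have simply spelled out the details—using simplicity to get that each $\PP_{P,c}^\NO(v)$ is a simplex, pushing the Minkowski decomposition~\eqref{eqn:PP_sum} from vertices to faces via the normal-cone intersection formula, and reading off face containment in each simplex summand as containment of vertex sets.
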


Two arborescences $\arb,\arb'$ differ by an \Def{edge rerouting} if there is a
unique vertex $v \in V(P)$ with $\arb(v) \neq \arb'(v)$. As a consequence, we
get a necessary condition for the adjacency of two coherent arborescences.

\begin{cor}\label{cor:PP_reroute}
    If the vertices of $\PP_{P,c}^\NO$ corresponding to two coherent
    arborescences $\arb,\arb'$ are adjacent, then $\arb,\arb'$ differ by an
    edge rerouting.
\end{cor}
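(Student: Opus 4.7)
The plan is to pass through Theorem~\ref{thm:PP_faces} and the Minkowski decomposition~\eqref{eqn:PP_sum} of $\PP_{P,c}^\NO$ into simplices.

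If the vertices $\PPGKZ^\NO(\arb)$ and $\PPGKZ^\NO(\arb')$ are adjacent, then by Theorem~\ref{thm:PP_faces} they are the two vertices of an edge $F = (\PP_{P,c}^\NO)^{\WT''}$ corresponding to a (minimal) coherent multi-arborescence $\arb''$ with $\arb, \arb' \preceq \arb''$. The Minkowski decomposition then gives
\[
    F \ = \ \sum_{v \neq \vopt} \conv\left\{\tfrac{u-v}{\NO(u-v)} : u \in \arb''(v)\right\},
\]
and simplicity of $P$ ensures that each $v$-summand is a genuine simplex of dimension $|\arb''(v)|-1$, since the edge directions at any vertex of a simple polytope are linearly independent.

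The crux is a dimension count: because $\dim F = 1$, only one summand can be nontrivial, and that summand has to be $1$-dimensional. Therefore there is a unique vertex $v^* \neq \vopt$ with $|\arb''(v^*)|=2$, while $|\arb''(v)|=1$ for every other $v$. Both $\arb$ and $\arb'$ must then select the unique element of $\arb''(v)$ at each $v \neq v^*$, and between them realize the two distinct elements of $\arb''(v^*)$ (otherwise they would coincide). Hence they differ exactly at $v^*$, which is precisely an edge rerouting.

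The main obstacle is step three. Minkowski subadditivity only gives $\sum_v(|\arb''(v)|-1)\ge 1$, i.e., at least one nontrivial summand; pinning down that the positive-dimensional summand is unique requires that the linear spans of the summands at distinct vertices $v$ of $P$ do not collapse onto a common one-dimensional subspace. This is automatic for $P$ in edge-generic position (compare Proposition~\ref{prop:edge_gen}), but has to be handled more carefully when $P$ carries many parallel edges so that several normalized edge-direction simplices align. Closing this gap, either by a genericity hypothesis or by a finer coherence argument excluding such coincidences, is the subtle technical point of the proof.
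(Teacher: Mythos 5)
Your plan follows the route the paper implicitly has in mind (the corollary is stated without separate proof as a consequence of Theorem~\ref{thm:PP_faces}), and the gap you flag at the end is not merely a loose end --- it is genuine, and the corollary as stated is in fact \emph{false}. The dimension count only forces $\dim\bigl(\sum_v F_v\bigr) = 1$, which allows several of the summands $F_v = \bigl(\PP_{P,c}^\NO(v)\bigr)^{\WT''}$ to be nontrivial \emph{parallel} segments, and then the two endpoint arborescences of the edge $F$ differ at every such $v$ simultaneously. The paper's own Example~\ref{ex:cubes} already furnishes a counterexample: take $P = C_3 = [0,1]^3$, $\NO = \NO^\GI \equiv 1$, and $0 < c_1 < c_2 < c_3$. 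The pivot polytope is the hexagon $\conv\{(2^{\sigma(1)-1},2^{\sigma(2)-1},2^{\sigma(3)-1}) : \sigma \in S_3\}$. The degenerate weight $\WT'' = (0,1,1)$ picks out the edge with endpoints $\PPGKZ^\GI(\arb_{(1,2,3)}) = (1,2,4)$ and $\PPGKZ^\GI(\arb_{(1,3,2)}) = (1,4,2)$, but these two coherent arborescences differ at \emph{two} vertices: at $\emptyset$ (sent to $\{3\}$ versus $\{2\}$) and at $\{1\}$ (sent to $\{1,3\}$ versus $\{1,2\}$). Geometrically, both $\PP_{C_3,c}^\GI(\emptyset) = \conv\{e_1,e_2,e_3\}$ and $\PP_{C_3,c}^\GI(\{1\}) = \conv\{e_2,e_3\}$ contribute the same segment $[e_2,e_3]$ to the edge $F$. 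More generally, for $C_d$ the permutahedron edge that swaps $\sigma$-values $k$ and $k+1$ changes the arborescence at $2^{k-1}$ vertices, which is as large as $2^{d-2}$.

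So your instinct was right: without an additional hypothesis the statement cannot be salvaged by a cleverer argument, because it is simply not true. It does hold once one rules out the coincidence you identified, for instance when no two distinct differences $\tfrac{u_1-v}{\NO(u_1-v)} - \tfrac{u_2-v}{\NO(u_2-v)}$ (taken over all $v \neq \vopt$ and $u_1,u_2 \in \Nb{P,c}{v}$) are parallel --- a genericity condition in the spirit of, but strictly stronger than, the edge-genericity of Proposition~\ref{prop:edge_gen}. Under such a hypothesis your argument closes cleanly, since each positive-dimensional $F_v$ would then span a distinct line and $\dim F = 1$ would force uniqueness; without it, the conclusion must be weakened to ``$\arb$ and $\arb'$ differ only at those $v$ for which $F_v$ is a positive-dimensional summand of the edge $F$.''
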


Note that the definition of $\PP_{P,c}^\NO$ given in~\eqref{eqn:PP} involves
\emph{all} arborescences. If $\arb$ is a non-coherent arborescence, the
geometry of $\PP_{P,c}^\NO$ gives us to the finest coherent coarsening of
$\arb$.

\begin{prop}\label{prop:PP_coarsening}
    Let $\arb$ be a $c$-arborescence of $(P,c)$ and let $F \subseteq
    \PP_{P,c}^\NO$ be the unique face containing $\PPGKZ^\NO(\arb)$ in
    its relative interior. Then the $c$-multi-arborescence $\arb'$ associated to
    $F$ is the finest coherent coarsening of $\arb$.
\end{prop}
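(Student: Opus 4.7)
The plan is to exploit the Minkowski-sum decomposition
\[
    \PP_{P,c}^\NO \ = \ \sum_{v \neq \vopt} \PP_{P,c}^\NO(v)
\]
from~\eqref{eqn:PP_sum}, together with the face-to-multi-arborescence correspondence furnished by Theorem~\ref{thm:PP_faces}. Any face $F \subseteq \PP_{P,c}^\NO$ admits a Minkowski decomposition $F = \sum_v F_v$, where $F_v = (\PP_{P,c}^\NO(v))^\WT$ for any $\WT$ in the relative interior of the normal cone of $F$. The coherent multi-arborescence $\arb'$ associated to $F$ is then $\arb'(v) = \{u \in \Nb{P,c}{v} : (u-v)/\NO(u-v) \in F_v\}$, which matches the non-generic form of~\eqref{eqn:NW}.

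The engine of the proof is a classical fact about Minkowski sums: if $Q = \sum_i Q_i$ and $G = \sum_i G_i$ is a face of $Q$ (with $G_i$ the corresponding face of $Q_i$), then any decomposition $p = \sum_i p_i$ of a point $p \in G$ with $p_i \in Q_i$ already satisfies $p_i \in G_i$. This follows by choosing a weight $\WT$ that simultaneously selects $G$ and the $G_i$ and equating $\WT^t p$ with $\sum_i \WT^t p_i$. I will apply this to the minimal face $F$ containing $\PPGKZ^\NO(\arb)$ and to the natural decomposition $\PPGKZ^\NO(\arb) = \sum_v (\arb(v)-v)/\NO(\arb(v)-v)$, whose summands lie in the corresponding $\PP_{P,c}^\NO(v)$ because $\arb(v) \in \Nb{P,c}{v}$. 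This forces $(\arb(v)-v)/\NO(\arb(v)-v) \in F_v$ for every $v$, hence $\arb(v) \in \arb'(v)$, i.e., $\arb \preceq \arb'$, so $\arb'$ is a coherent coarsening of $\arb$.

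It remains to prove minimality. Let $\arb''$ be any coherent multi-arborescence with $\arb \preceq \arb''$, corresponding via Theorem~\ref{thm:PP_faces} to a face $F'' = \sum_v F''_v$ of $\PP_{P,c}^\NO$. The refinement hypothesis places each summand $(\arb(v)-v)/\NO(\arb(v)-v)$ inside $F''_v$, hence $\PPGKZ^\NO(\arb) \in \sum_v F''_v = F''$. Minimality of $F$ then gives $F \subseteq F''$, which by the order-preserving correspondence of Theorem~\ref{thm:PP_faces} translates to $\arb' \preceq \arb''$. The only step requiring genuine care is the Minkowski-sum decomposition lemma quoted above, which is standard but worth stating explicitly; beyond that the argument is bookkeeping through the dictionary face $\leftrightarrow$ coherent multi-arborescence.
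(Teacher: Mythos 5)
Your argument is correct and follows essentially the same route as the paper: decompose $F = \sum_v F_v$ via the Minkowski sum~\eqref{eqn:PP_sum}, use the fact that a point in a face of a Minkowski sum has all its summands lying in the corresponding summand-faces to conclude $\arb \preceq \arb'$, and then invoke minimality of $F$ (equivalently, that $\PPGKZ^\NO(\arb)$ lies in the relative interior of $F$) for the finest-coarsening claim. The one thing you add is an explicit statement and justification of the Minkowski-sum decomposition lemma, which the paper's proof uses tacitly; this is a welcome clarification rather than a departure.
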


\begin{proof}
    Let $\WT$ be a weight such that $F = (\PP_{P,c}^\NO)^\WT$. For every $v
    \neq \vopt$, let $F_v = (\PP_{P,c}^\NO(v))^\WT$. Then $F = \sum_v F_v$
    and, in particular $\frac{\arb(v) - v}{\NO(\arb(v)-v)} \in F_v$ for all $v
    \neq \vopt$. The multi-arborescence associated to $F$ is given by
    \[
        \arb'(v) \ = \ \left\{ u \in N_P(c,v) : \tfrac{\arb(v) - v}{\NO(\arb(v)-v)}
        \in F_v \right \}
    \]
    and hence is a coarsening of $\arb$. If $\arb''$ is another coherent
    multi-arborescence coarsening $\arb$ with corresponding face $G$, then
    $\PPGKZ^\NO(\arb) \in G$ and hence $\PPGKZ^\NO(\arb) \in G
    \cap F$. But since $\PPGKZ^\NO(\arb)$ is contained in the relative
    interior of $F$, it follows that $F \cap G = F$, which shows that
    $\arb'$ is the finest coherent coarsening of $\arb$.
\end{proof}

Let us close by remarking that the poset of \emph{all} $c$-arborescences on
$(P,c)$ can also be realized as the face poset of a convex polytope.

\begin{prop}
    \label{prop:Bauespos}
    Let $P$ a simple polytope with $h$-vector $h(P) = (h_0,\dots,h_d)$. For a
    generic linear function $c$, the poset of all $c$-multiarborescences is isomorphic
    to the face poset of the polytope
    \[
        \Delta_{0}^{h_1} \times 
        \Delta_{1}^{h_2} \times 
        \cdots \times
        \Delta_{d}^{h_d} \, .
    \]
\end{prop}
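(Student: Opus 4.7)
The plan is to exhibit the poset of $c$-multi-arborescences as a direct product of local posets, one for each vertex $v\neq\vopt$, and then identify this product with the face poset of the claimed product of simplices.

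The first observation is that the constraints defining a $c$-multi-arborescence are entirely local: the value $\arb(v)$ can be any non-empty subset of $\Nb{P,c}{v}$, the choices at different vertices are independent, and the refinement order $\preceq$ is coordinate-wise. Together with the forced equation $\arb(\vopt)=\{\vopt\}$, this yields a poset isomorphism
\[
\bigl(\{c\text{-multi-arborescences of }(P,c)\},\preceq\bigr) \ \cong \ \prod_{v\neq\vopt}\bigl(2^{\Nb{P,c}{v}}\setminus\{\emptyset\},\subseteq\bigr).
\]
The poset of non-empty subsets of a $k$-element set is precisely the poset of non-empty faces of the simplex $\Delta_{k-1}$, so each factor on the right is the face poset of a simplex whose dimension is one less than the out-degree of $v$.

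Next, I would collect factors by out-degree. Since $P$ is a simple $d$-polytope, $|\Nb{P,c}{v}|=d-\mathrm{indeg}(v)$, so the number of vertices with $|\Nb{P,c}{v}|=k$ equals $h_{d-k}$. Invoking the Dehn--Sommerville relations $h_{d-k}=h_k$ for simple polytopes, there are exactly $h_k$ vertices with out-degree $k$ for each $k\geq 0$. The case $k=0$ corresponds to the unique sink $\vopt$, whose forced value $\arb(\vopt)=\{\vopt\}$ contributes a trivial one-element factor. The remaining factors run over $k\geq 1$ and group together as $h_k$ copies of the face poset of $\Delta_{k-1}$.

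Finally, I would invoke the elementary fact that the face poset of a Cartesian product of polytopes is the direct product of the face posets of the factors. This identifies the poset of $c$-multi-arborescences with the face poset of
\[
\Delta_0^{h_1}\times\Delta_1^{h_2}\times\cdots\times\Delta_d^{h_d},
\]
as asserted. The only substantive ingredient is Dehn--Sommerville, which is classical; everything else is definition-chasing. The main (mild) obstacle is bookkeeping: treating the $\vopt$-factor separately and making sure the $h$-vector exponents line up with the simplex dimensions in the stated way.
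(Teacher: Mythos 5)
Your proof is correct and follows essentially the same route as the paper's: decompose the poset of $c$-multi-arborescences into a product of local posets of non-empty subsets of $c$-improving neighbors, identify each local poset with the face poset of a simplex, and group by out-degree using the $h$-vector. The one place you are more explicit than the paper is the invocation of Dehn--Sommerville: the paper defines $h_i$ as the number of vertices of in-degree $i$ and then simply asserts that the $h$-vector also counts out-degrees, whereas you spell out that the number of vertices with out-degree $k$ is $h_{d-k}$ and then apply $h_{d-k}=h_k$. This is a genuine (if small) step the paper glosses over, and making it explicit is a minor improvement; the rest is the same bookkeeping.
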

\begin{proof}
    Recall the face structure of a product of simplices $\prod_{i = 1}^{n}
    \Delta_{d_{i}}$ for $d_{i} \in \mathbb{N}$ is given by a choice of subset
    $S_{i}$ from each $[d_{i}]$. Then two collections of subsets $\{S_{i}\}$ and
    $\{T_{i}\}$ correspond to faces that contain one another precisely when
    $S_{i} \subseteq T_{i}$ for all $i \in [n]$.

    Let $\vopt$ be the unique maximizer of $c$ over $P$.  A
    $c$-multi-arborescence is given by the choice of a non-empty subset
    $S_{v}$ of $c$-improving neighbors for every vertex $v \neq \vopt$.  This
    choice is made independently, so any possible collection of subsets
    corresponds to some $c$-multi-arborescence. The collection of all
    multi-arborescences then corresponds to all choices of sets of outgoing
    neighbors. 
    
    For a generic orientation on a simple polytope, the $h$-vector counts the
    number of vertices with a given out-degree. Hence, the set of all possible
    choices of subsets of outgoing edges is given by
    \[
    \Delta_{0}^{h_1} \times 
    \Delta_{1}^{h_2} \times 
    \cdots \times
    \Delta_{d}^{h_d} \, . \qedhere
    \]
\end{proof}

The space of all monotone paths yields a similar cell-complex called the Baues poset of cellular strings. In general, that complex is not polytopal. Proposition \ref{prop:Bauespos} shows that the analogous poset for arborescences is instead always the face lattice of product of simplices. Furthermore, the choice of simplices is independent of $c$ so long as $c$ is generic, since the h-vector is invariant.

More generally, pivot polytopes and their lattices of coherent
multi-arborescences behave in analogy to fiber polytopes and their lattices of
coherent subdivisions. This analogy is particularly strong in the case of
monotone path polytopes and
secondary polytopes. To start, given a multi-arborescence or subdivision,
evaluating whether it is coherent in its respective context corresponds to
solving a linear feasibility problem. For adjacency in the monotone path
polytope or secondary polytope, adjacent vertices must satisfy that their
corresponding coherent monotone paths or coherent triangulations differ by a
flip. However, differing by a flip is not sufficient to guarantee adjacency. The
edge reroutings thus play the role of flips for pivot polytopes. Furthermore,
given any monotone path or triangulation, we can assign it a canonical
point the unique face containing the point in its relative interior
corresponds to the finest coherent coarsening of the 
monotone path or triangulation respectively. This is precisely the statement
of Proposition~\ref{prop:PP_coarsening} for pivot rule polytopes.

\subsection{Faces of neighbotopes}\label{NP_facial}

As in the previous section we start by understanding the vertices of
neighbotopes.  We can again use the description as a Minkowski sum to derive
an upper bound on the number of coherent arborescences.

\begin{prop}
    Let $P$ be a simple $d$-polytope with $n$ vertices, then the number of
    coherent arborescences of $P$ is at most $d^n (1-\frac{1}{d^{d+1}})$.
\end{prop}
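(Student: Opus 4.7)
My plan is to imitate the Minkowski-sum-plus-colorful-Helly strategy used in the proof of Theorem~\ref{thm:PP_UB}, adapted to the neighbotope.

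First, I invoke the decomposition $\NP[\NO]{P} = \sum_{v \in V(P)} \NP[\NO]{P}(v)$ from the proof of Theorem~\ref{thm:NP}, so that the coherent arborescences are in bijection with the vertices of this Minkowski sum. Since $P$ is a simple $d$-polytope, every vertex $v$ has exactly $d$ neighbors, so each summand $\NP[\NO]{P}(v) = \conv(\{0\} \cup S_v)$ with $|S_v| = d$ is a $d$-simplex having $d+1$ vertices: the origin $0$ and the $d$ normalized edge directions.

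Next I record the key structural observation: the normal cone of $0$ inside $\NP[\NO]{P}(v)$ equals $\Ncone_P(v)$, and the collection $\{\Ncone_P(v) : v \in V(P)\}$ tiles $\R^d$ as the normal fan of $P$. Therefore, for any generic $c \in \R^d$, there is a unique $v^* \in V(P)$ with $c \in \mathrm{int}\,\Ncone_P(v^*)$; the $c$-maximizer of $\NP[\NO]{P}(v^*)$ is $0$, while the $c$-maximizer of each other summand lies among the $d$ non-origin vertices. In particular, the root $v^*$ of each coherent arborescence is determined by $c$, leaving effectively $d$ independent choices at each of the $n-1$ non-root summands.

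Third, I apply Lov\'asz' colorful Helly theorem in $\R^d$ as in the proof of Theorem~\ref{thm:PP_UB}. For any $d+1$ of the $n$ summands, consider the corresponding $d+1$ families of normal cones at the $d$ non-origin vertices. If every colorful transversal yielded a non-empty intersection, then by colorful Helly one of the families would have a common intersection; but distinct normal cones of the same simplex have pairwise disjoint interiors, giving a contradiction. Hence, for each $(d+1)$-subset of summands, at least one colorful tuple among the $d^{d+1}$ possibilities is infeasible.

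Finally, I count: fixing the remaining $n - d - 1$ summands in any of $d^{n-d-1}$ ways (using the $d$ effective choices per summand), each fixing forces at least one infeasible configuration on the varying $d+1$ summands. This produces at least $d^{n-d-1}$ infeasible configurations overall, yielding
\[
    f_0(\NP[\NO]{P}) \ \leq \ d^n - d^{n-d-1} \ = \ d^n\Bigl(1 - \tfrac{1}{d^{d+1}}\Bigr).
\]
The main technical obstacle is the bookkeeping needed to reconcile the Helly obstruction with the root-selection: a direct application of colorful Helly to the full $(d+1)$-vertex sets gives only the weaker estimate $(d+1)^n - (d+1)^{n-d-1}$, and recovering the stated bound relies on extracting the $d$ effective non-origin choices per summand via the root-uniqueness observation.
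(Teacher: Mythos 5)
Your proof follows the paper's intended approach (the Minkowski decomposition $\NP[\NO]{P} = \sum_v \NP[\NO]{P}(v)$ combined with Lov\'asz' colorful Helly, as in Theorem~\ref{thm:PP_UB}), and you correctly observe that each summand $\NP[\NO]{P}(v)$ is a $d$-simplex with $d+1$ vertices including the origin, whereas the paper's own proof asserts it is a $(d-1)$-simplex; the $d$ normalized edge directions at a vertex of a simple $d$-polytope are linearly independent, so $0$ is always a vertex of $\NP[\NO]{P}(v)$. Your identification of $\Ncone_{\NP[\NO]{P}(v)}(0)$ with $\Ncone_P(v)$, and the consequent ``exactly one origin is ever selected'' observation, are also correct and are the right structural facts to bring in.

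The gap is in the final count. Root-uniqueness says every vertex of $\NP[\NO]{P}$ comes from a tuple in which exactly one summand (the root) selects its origin and the other $n-1$ summands each select one of $d$ non-origin vertices; this gives $n\cdot d^{n-1}$ candidate tuples, not $d^n$. Since a simple $d$-polytope has $n\ge d+1>d$ vertices, $n\,d^{n-1}>d^n$, so the base count $d^n$ in your concluding inequality is never established. Worse, the tuples you run colorful Helly over — $d$ non-origin choices at \emph{all} $n$ summands, $d^n$ of them — are already all infeasible by root-uniqueness (none selects an origin), so subtracting $d^{n-d-1}$ of them from $d^n$ bounds nothing about the number of vertices of $\NP[\NO]{P}$. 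Fixing a root $v^*$ and running Helly on the remaining $n-1$ summands would instead produce, for each of $d^{n-d-2}$ fixings of $n-d-2$ of them, at least one infeasible choice among $d+1$ varying summands, yielding the valid bound $n(d^{n-1}-d^{n-d-2})$; but that is strictly larger than $d^n(1-1/d^{d+1})$, so the stated constant is not recovered. In short: you have accurately spotted the $(d-1)$-simplex issue in the published argument, but the root-uniqueness bookkeeping you propose does not close the gap between $n\,d^{n-1}$ and $d^n$.
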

\begin{proof}
    If $P$ is simple, then $\NP[\NO]{P}(v)$ is a $(d-1)$-simplex for all $v
    \in V(P)$. The same argument as in the proof of 
    Theorem~\ref{thm:NP} applies and shows that of the $d^n$ possible
    vertices of the Minkowski sum at least $d^{n-(d+1)}$ fail to be
    vertices.
\end{proof}

Note that this bound is in general far from being tight: If $P$ is a
$d$-simplex, then $\NP[\NO]{P}(v) = -v + P$ for all vertices $v$ and 
$\NP[\NO]{P}$ is homothetic to $P$; see also
Proposition~\ref{prop:NP_equiv}. 
Of course, the Minkowski sum decomposition is also valid for non-simple
polytopes and a more involved upper bound maybe derived in the same manner.

We now consider the face lattice of neighbotopes.
A \Def{multi-arborescence} of a polytope $P$ 
is a map $\br : V(P) \to 2^{V(P)}$ that satisfies 
\begin{compactenum}[(a)]
    \item for all $v$ we have $\br(v) \subseteq \Nb{P}{v} \cup \{v\}$;
    \item there is a unique face $F_\br \subseteq P$ with $\{ v : v \in \br(v)
        \} = V(F_\br)$ and 
    \item $\br(v) = \Nb{P}{v} \cup \{v\}$ for all $v \in
        V(F_\br)$;
    \item for all $v \in V$ there is $k \ge 1$ with $\br^k(v) = V(F_\br)$.
\end{compactenum}

\begin{prop}
    Let $P$ be a polytope and $\NO$ a normalization. Every face of the
    neighbotope $\NP[\NO]{P}$ can be identified with a unique
    multi-arborescence.
\end{prop}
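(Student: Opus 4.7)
The approach is to exploit the Minkowski sum decomposition $\NP[\NO]{P} = \sum_{v \in V(P)} \NP[\NO]{P}(v)$ established in the proof of Theorem~\ref{thm:NP}, in the same spirit as Theorem~\ref{thm:PP_faces}. Any face of $\NP[\NO]{P}$ has the form $F = (\NP[\NO]{P})^c$ for some $c \in \R^d$, and the standard Minkowski sum identity gives
\[
F \ = \ \sum_{v \in V(P)} (\NP[\NO]{P}(v))^c.
\]
Since each summand $\NP[\NO]{P}(v)$ is the convex hull of the finite set indexed by $\Nb{P}{v} \cup \{v\}$, its $c$-maximal face corresponds to a unique nonempty subset $\br(v) \subseteq \Nb{P}{v} \cup \{v\}$. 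I would take this $\br$ as the candidate multi-arborescence assigned to $F$; condition (a) is then immediate from the construction.

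I would verify (b) by observing that $v \in \br(v)$ precisely when the origin attains the maximum of $c$ on $\NP[\NO]{P}(v)$, equivalently when $c^t(u-v) \le 0$ for every $u \in \Nb{P}{v}$, equivalently when $v \in V(P^c)$. Thus the ``root set'' $\{v : v \in \br(v)\}$ is exactly the vertex set of the unique face $F_\br := P^c$ of $P$. For (c), when $v \in V(F_\br)$ the origin already realises the maximum of $c$ over $\NP[\NO]{P}(v)$, so the $c$-maximal face is precisely the convex hull of $\{0\}$ together with those directions $(u-v)/\NO(u-v)$ on which $c$ is still extremal, giving the prescribed description of $\br(v)$. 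For (d), on $V(P) \setminus V(F_\br)$ every element of $\br(v)$ has strictly larger $c$-value than $v$, so iteration of the set-valued map $\br$ must reach $V(F_\br)$ in finitely many steps, and once inside $V(F_\br)$ the iterates stabilise by~(c).

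The assignment $F \mapsto \br$ is injective because a Minkowski sum uniquely determines each of its summand-faces: reading off $(\NP[\NO]{P}(v))^c$ from $F$ recovers $\br(v)$ for every $v$. Conversely, given a multi-arborescence $\br$ satisfying (a)--(d), I would reconstruct an appropriate $c$ by choosing a linear function in the relative interior of the intersection of the normal cones cut out by the prescribed $c$-maximal face of each simplex $\NP[\NO]{P}(v)$; the joint compatibility encoded in (b)--(d) is exactly what ensures that this intersection is nonempty. The main obstacle I anticipate is precisely this last step: one has to show that the local cone conditions at the various vertices are jointly realisable by a single direction $c$, which is where the global hypothesis that $F_\br$ is a face of $P$ (rather than an arbitrary set of vertices) does the real work, and where the iteration condition (d) becomes essential for ruling out inconsistent orientations.
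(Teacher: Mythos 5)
Your approach mirrors the paper's exactly: both start from the Minkowski decomposition $\NP[\NO]{P} = \sum_{v} \NP[\NO]{P}(v)$ established in the proof of Theorem~\ref{thm:NP}, read off $\br(v)$ from the $c$-maximal face of each summand $\NP[\NO]{P}(v)$, check the defining conditions of a multi-arborescence, and deduce injectivity from the fact that $F = \sum_v F_v$ is recoverable from the summand faces $F_v$, i.e.\ from $\br$. Your write-up is more detailed than the paper's (which essentially asserts that the defining properties hold), and that extra care is fine.

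The one place you went astray is the closing paragraph. The proposition asserts only an \emph{injection} from faces of $\NP[\NO]{P}$ to multi-arborescences (``can be identified with a unique multi-arborescence''). The converse direction you try to set up --- reconstructing a $c$ from an arbitrary multi-arborescence $\br$, and your worry that ``the local cone conditions at the various vertices are jointly realisable'' --- is \emph{surjectivity}, which is not part of the claim and in fact should not be expected to hold in general: just as not every arborescence is coherent, not every multi-arborescence corresponds to a face. So the ``main obstacle'' you anticipate is not an obstacle to this proposition; you should simply stop after the injectivity argument, which you already have.
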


\begin{proof}
    Let $F = (\NP[\NO]{P})^c$ be a face of the neighbotope $\NP[\NO]{P}$.  It
    follows from the proof of Theorem~\ref{thm:NP} and \eqref{eqn:Nhood} that
    $F = \sum_v F_v$, where $F_v = \NP[\NO]{P}(v)^c$. We define a
    multi-arborescence $\br$ by $u \in \br(v)$ if and only if
    $\frac{u-v}{\NO(u-v)} \in F_v$ for all $v \in V(P)$. Unless $v \in F_\br
    := P^c$, there is always an improving neighbor $u \in \br(v)$ and $v \not
    \in \br(v)$. Otherwise, $\br(v) = \Nb{P}{v} \cup \{v\}$. This shows that
    $\br$ satisfies all defining properties of a multi-arborescence. Since $F$
    can be recovered from $\br$, it also shows that every face corresponds to
    a unique multi-arborescence.
\end{proof}

This injection furthermore encodes a partial order. Namely, we say that $\br
\preceq \br'$ for two multi-arborescences $\br$ and $\br'$, when $\br(v)
\subseteq \br'(v)$ for all $v \in V$. This partial order corresponds to the
partial order of the face lattice. Namely, for faces $F_{1}, F_{2}$ of
$\NP[\NO]{P}$ with corresponding multi-arborescences $\br_{1}$ and $\br_{2}$,
we have that $F_{1}$ is a face of $F_{2}$ if and only if $\br_{1} \preceq
\br_{2}$.

\section{Monotone path polytopes and sweep polytopes}\label{sec:MS+SP}

Now we make connections to two famous constructions in geometric combinatorics.

\subsection{Max-slope pivot rule polytopes and monotone path polytopes}\label{sec:max_slope}

The shadow-vertex rule is a well-known and well analyzed pivot rule that in
its usual form does not belong to the class of memory-less rules. We show that
the max-slope rule is a natural generalization that has the benefit of being a
NW-rule. We also show that it is intimately related to the theory of
(coherent) cellular strings on polytopes of
Billera--Kapranov--Sturmfels~\cite{BKS}.

For the setup, let $P \subset \R^d$ be a $d$-polytope and $c$ a generic
objective function. Let $r$ be a vertex of $P$ together with a weight $\WT \in
\R^d$ such that $r = P^\WT$. We seek to find the optimal vertex $\vopt = P^c$.
Define a linear projection $\pi : \R^d \to \R^2$ by $\pi(x) := (c^tx,
\WT^tx)$. By construction $\pi(r)$ and $\pi(\vopt)$ are vertices of the
projection $\pi(P)$. There is a unique path from $\pi(r)$ to $\pi(\vopt)$ that
is increasing with respect to $c$. Since $c$ and $\WT$ where assumed to be
generic, the pre-image of that path is a $c$-increasing vertex-edge path on $P$
from $r$ to $\vopt$. This is called a \Def{shadow-vertex path} from $r$ to
$\vopt$. 

Note that the path is not determined by $r$ but rather by the choice of $\WT$.
Given $r,\WT$, the shadow path can be found with a variant of the simplex
algorithm but it is clear that the procedure outlined above does not yield
pivots for vertices outside the shadow-vertex path. In fact, it does not even
yield a memory-less pivot rule in the sense of Section \ref{sec:intro} for the vertices \emph{on}
the shadow-vertex path as they would require a choice of a weight that might
lead to a different shadow-vertex path. 

The vertices on the shadow-path from $r$ to $\vopt$ can be characterized
locally. Starting from $r$, a $c$-improving neighbor $s \in \Nb{P,c}{r}$ will
be the next vertex on the shadow path if $[\pi(r),\pi(s)]$ is an edge of
$\pi(P)$. This happens if the slope of the edge in the plane is \emph{maximal}
among all edges of $\pi(P)$ incident to $\pi(r)$. That is, if
$\frac{\WT^t(s-r)}{c^t(s-r)} > \frac{\WT^t(u-r)}{c^t(u-r)}$ for all $u \in
\Nb{P,c}{r} \setminus s$. The max-slope rule now extends this condition to all
vertices: if $v \neq \vopt$, then max-slope chooses the neighbor
\begin{equation}\label{eqn:MS}
    u_* \ = \ \argmax \left\{ \frac{\WT^t(u-v)}{c^t(u-v)} : u \in \Nb{P,c}{v}
    \right\} \, .
\end{equation}
Our discussion above now proves the following.
\begin{prop}\label{prop:shadow_path}
    For $(P,c)$, let $\WT$ be a generic weight and $r = P^\WT$. Let $\arb^\MS =
    \arb_{P,c}^\MS(\WT)$ be the max-slope arborescence of $(P,c)$ with respect
    to $\WT$. The path $(r_i)_{i\ge0} \in V(P)$ with $r_0 := r$ and $r_{i+1} :=
    \arb^\MS(r_i)$ for $i \ge 0$ is precisely the shadow-vertex path of $(P,c)$
    with respect to $\WT$.
\end{prop}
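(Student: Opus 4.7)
The plan is to prove by induction on $i \ge 0$ that $r_i$ is the $i$-th vertex on the shadow-vertex path of $(P,c)$ with respect to $\WT$. The central tool is the projection $\pi : \R^d \to \R^2$, $\pi(x) = (c^tx, \WT^tx)$, as set up already above the proposition. In these coordinates the shadow-vertex path is the $\pi$-lift of the unique $c$-increasing boundary arc of the polygon $\pi(P)$ from $\pi(r)$ (the topmost vertex, maximizing the second coordinate) to $\pi(\vopt)$ (the rightmost vertex, maximizing the first coordinate). Viewed as the graph of a function of the first coordinate over $[c^tr, c^t\vopt]$, this arc is concave and its successive edge slopes strictly decrease. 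The base case $r_0 = r$ is immediate.

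For the inductive step, suppose $r_i$ lies on the shadow arc and let $s$ be the next vertex of the arc, so that $[\pi(r_i), \pi(s)]$ is the outgoing edge of slope $m_* := \WT^t(s-r_i)/c^t(s-r_i)$. I need to show $\arb^\MS(r_i) = s$. By concavity of the upper arc together with convexity of $\pi(P)$, the entire polygon $\pi(P)$ lies weakly below the line $y = \WT^t r_i + m_*(x - c^t r_i)$ for all $x \ge c^t r_i$. Hence for every $c$-improving neighbor $u \in \Nb{P,c}{r_i}$ (so $c^t u > c^t r_i$), the projection $\pi(u)$ satisfies
\[
    \frac{\WT^t(u - r_i)}{c^t(u-r_i)} \ \le \ m_* \, ,
\]
with equality only if $\pi(u)$ lies on the segment $[\pi(r_i), \pi(s)]$. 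Genericity of $\WT$ ensures that no vertex of $P$ other than $r_i$ and $s$ projects onto this segment, forcing $u = s$. By equation~\eqref{eqn:MS}, this identifies $\arb^\MS(r_i) = s$, which is precisely the next vertex on the shadow-vertex path, i.e., $r_{i+1}$.

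Since $(c^t r_i)_i$ strictly increases as long as $r_i \neq \vopt$, the induction terminates at $r_i = \vopt$ in finitely many steps; the convention $\arb^\MS(\vopt) = \vopt$ then stabilizes the sequence, so $(r_i)_{i \ge 0}$ traces out the shadow-vertex path in its entirety. The only delicate point I anticipate is pinning down the genericity hypothesis: one must ensure simultaneously that (i) the max-slope argmax is unique at every non-optimal vertex, and (ii) $\pi$ is injective on $V(P)$ and no vertex of $P$ projects into the relative interior of an edge of $\pi(P)$. Both are open dense conditions in $\WT$ and are implicit in the standard genericity assumption already used to make the shadow-vertex path well-defined, so the argument can proceed under the hypothesis of the proposition without further restriction.
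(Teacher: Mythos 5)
Your proof is correct and follows essentially the same route as the paper's: both use the projection $\pi(x) = (c^t x, \WT^t x)$ and the observation that, at any vertex on the shadow path, the outgoing shadow edge is the one of maximal slope among projected improving edges, which is exactly what the max-slope rule selects. The paper leaves the induction and the supporting-line/convexity justification implicit (it simply says "our discussion above now proves the following"), whereas you spell them out; that is the only difference.
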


Let $\vmopt = P^{-c}$ be the minimizer with respect to $c$.  A \Def{cellular
string} on $(P,c)$ is a sequence of faces $F_0, F_1, \dots, F_k$ with the
property that $\vmopt = F_0^{-c}$, $\vopt = F_k^c$ and $F_{i-1}^c = F_i^{-c}$
for all $i=1,\dots,k$. A cellular string can be refined by replacing some of
the $F_i$ by cellular strings of $(F_i,c)$. This yields a partial order on
cellular strings, called the \Def{Baues poset} of $(P,c)$. The minimal elements
are the \Def{monotone paths} from $\vmopt$ to $\vopt$. 

Billera--Sturmfels~\cite{BSFiberPoly} and
Billera--Kapranov--Sturmfels~\cite{BKS} developed the theory of coherent
cellular strings on polytopes. A monotone path $\vmopt = v_0, v_1,\dots,v_k =
\vopt$ is called \Def{coherent} if there is a $\WT \in \R^d$ such that $v_i$
is the unique maximizer of $\WT$ over the slice $P \cap \{x : c^tx = c^t v_i
\}$ for all $i$. For any monotone path $W = v_0,\dots,v_k$, define the point
\[
    \Phi_W \ := \ \sum_{i=1}^k \frac{c^t(v_i-v_{i-1})}{c^t(v_k-v_{0})}
    (v_i - v_{i-1}) \, 
\]
and with it the \Def{monotone path polytope}
\[
    \Mon_c(P) \ := \  \conv\{ \Phi_W : W \text{ monotone path of } (P,c)
    \} \, .
\]
The vertices of $\Mon_c(P)$ are precisely the coherent monotone paths and,
stronger even, the face lattice of $\Mon_c(P)$ defines the subposet of the
Baues poset of \Def{coherent} cellular strings of $(P,c)$.

We will show next that the max-slope pivot polytope provides a refinement of the
monotone path polytope in the following sense. A polytope $Q \subset \R^d$ is
a \Def{weak Minkowski summand} of a polytope $P \subset \R^d$ if there is
$\lambda >0$ and a polytope $R \subset \R^d$ such that $\lambda P  = Q + R$.
This implies that there is a poset map from the face lattice of $P$ onto the
face lattice of $Q$ with favorable combinatorial and topological properties.
Roughly, this means that the combinatorics of $P$ \emph{refines} the
combinatorics of $Q$.

\begin{prop}\label{prop:MS_Mon}
    Let $P \subset \R^d$ be a polytope and $c$ a generic objective function.
    Then the monotone path polytope $\Mon_c(P)$ is a weak Minkowski summand of
    the max-slope pivot polytope $\PP_{P,c}^\MS$.
\end{prop}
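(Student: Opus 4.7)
The plan is to establish the weak Minkowski summand relation by proving that the normal fan of $\PP_{P,c}^\MS$ refines the normal fan of $\Mon_c(P)$. This is the standard equivalent formulation: by a classical result of Shephard and McMullen, a polytope $Q$ sitting in the same linear subspace as $P$ is a weak Minkowski summand of $P$ if and only if every maximal cone of the normal fan of $P$ is contained in a cone of the normal fan of $Q$.

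Both normal fans admit concrete descriptions in terms of equivalence classes of generic weights $\WT \in \R^d$. By Theorem~\ref{thm:PP}, two generic weights $\WT, \WT'$ lie in the same open maximal cone of the normal fan of $\PP_{P,c}^\MS$ precisely when $\arb_{P,c}^\MS(\WT) = \arb_{P,c}^\MS(\WT')$. On the other side, the Billera--Sturmfels fiber polytope construction describes the vertices of $\Mon_c(P)$ as the coherent monotone paths of $(P,c)$, and two generic $\WT, \WT'$ determine the same vertex of $\Mon_c(P)$ if and only if they induce the same coherent monotone path from $\vmopt$ to $\vopt$.

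The key observation linking these two equivalences is that the unique directed path in $\arb_{P,c}^\MS(\WT)$ starting at $\vmopt$ is exactly the coherent monotone path of $(P,c)$ with respect to $\WT$. To prove this, I would revisit the projection $\pi(x) = (c^t x, \WT^t x)$ from the discussion of the shadow-vertex rule. The upper boundary of the polygon $\pi(P)$ is by construction the coherent monotone path associated to $\WT$, and at any vertex $v$ on this upper boundary the max-slope choice $\argmax\{\WT^t(u-v)/c^t(u-v) : u \in \Nb{P,c}{v}\}$ selects precisely the next edge of this upper boundary. Since $\vmopt$ projects to the leftmost vertex of $\pi(P)$ and lies on the upper boundary, an induction along the path shows that the $\vmopt$-rooted path in the max-slope arborescence traces out the upper boundary of $\pi(P)$; this is exactly the statement of Proposition~\ref{prop:shadow_path} extended to $r = \vmopt$. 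Consequently, $\arb_{P,c}^\MS(\WT) = \arb_{P,c}^\MS(\WT')$ forces the corresponding coherent monotone paths to coincide, which gives the required refinement of normal fans and hence the weak Minkowski summand statement.

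The main obstacle is the identification between the $\vmopt$-rooted path in the arborescence and the coherent monotone path; the geometry of $\pi$ makes this essentially transparent, but some care is needed to verify that $\vmopt$ indeed sits on the upper boundary of $\pi(P)$ (i.e.\ on the side carrying the coherent path with respect to $\WT$) and that the max-slope pivot at each intermediate vertex of the path agrees with the shadow-edge choice, so that the induction carries through. Handling the genericity assumption on $\WT$ relative to $c$ is a minor point: restricting to $\WT$ linearly independent of $c$ ensures that both $\pi(P)$ is a true polygon and that all arborescences and monotone paths involved are unambiguously defined.
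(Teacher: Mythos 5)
Your proof is correct and follows essentially the same route as the paper's: both invoke Shephard's criterion (you phrase it in the equivalent language of normal-fan refinement) and reduce the claim to the observation that the $\vmopt$-rooted path in the max-slope arborescence is the coherent monotone path with respect to $\WT$. The paper establishes that observation by splicing the two shadow-vertex paths at $r = P^\WT$ and verifying condition~\eqref{eqn:MS} along the result, while you run the max-slope choice directly from $\vmopt$ along the upper boundary of $\pi(P)$; the geometric content is the same, though be aware that Proposition~\ref{prop:shadow_path} as stated applies only to $r = P^\WT$, so your "extension to $r = \vmopt$" is precisely the extra argument that needs (and that you do) supply.
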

\begin{proof}
    We use a result of Shephard (cf.~\cite[Theorem~15.1.2]{grunbaum}) that
    states that $Q$ is a weak Minkowski sum of $P$ if and only if $Q^\WT$ is a
    vertex whenever $P^\WT$ is a vertex.

    Let $\WT \in \R^d$ be generic and $\arb = \arb_{P,c}^\MS(\WT)$ the
    max-slope arborescence of $(P,c)$ corresponding to the vertex
    $(\PP_{P,c}^\MS)^\WT$.  Let $r = P^\WT$. We can apply the same argument as
    before and obtain a shadow-vertex path from $r$ to $\vmopt$.  Combining
    this path with the shadow-vertex path from $r$ to $\vopt$ yields the
    max-slope path from $\vmopt$ to $\vopt$. Verifying
    condition~\eqref{eqn:MS} along this path then shows that this max-slope
    path is precisely the coherent monotone path induced by $\WT$, which shows
    that $\Mon_c(P)^\WT$ is a vertex.
\end{proof}

\begin{example}[Monotone path polytopes of simplices]\label{ex:Mon_simplex}
    Let $P$ be a $d$-simplex with vertices $v_0, v_1, \dots, v_d$ ordered
    according to a generic objective function $c$.
    In~\cite[p.~545]{BSFiberPoly} it is shown that $\Mon_c(P)$ is
    combinatorially isomorphic to a $(d-1)$-dimensional cube. Any choice $0 =:
    i_0 < i_1 < \cdots < i_{k-1} < i_k := d$ defines a monotone path $v_{i_0},
    v_{i_1}, \dots, v_{i_{k-1}}, v_{i_k}$ and it is straightforward to show that
    every such path is coherent. Continuing Example~\ref{ex:simplex_MS}, we see
    that choosing a non-crossing arborescence for every interval $[i_j,i_{j+1}]
    = \{i_j, i_{j}+1,\dots,i_{j+1}\}$ yields a non-crossing arborescence of
    $(P,c)$ that contains the given monotone path. This shows that the set of
    refinements of a given monotone path to a non-crossing arborescence has the
    structure of a product of associahedra.
\end{example}

Note that we did not require $P$ to be simple in
Proposition~\ref{prop:MS_Mon}. When $P$ is simple,
Proposition~\ref{prop:MS_Mon} yields a necessary criterion when a
multi-arborescence is coherent. Let $\arb$ be a multi-arborescence. For every
$v \in V(P)$ there is a unique smallest face $F_v \subset P$ with $v \cup
\arb(v) \subseteq F$. We can associate to $\arb$ a cellular string as
follows: Set $u_0: = \vmopt$ and $F_0 := F_{u_0}$. For $i \ge 1$, let $u_i :=
F_{i-1}^c$ and $F_i := F_{u_i}$. We call the $c$-multi-arborescence $\arb$
\Def{cellular} if $\arb(v) \subseteq V(F_i)$ for all 
$v \in V(F_i) \setminus u_{i+1}$ and all $i$. That is, if $\arb$ restricts to a $c$-multi-arborescence
of $(F_i,c)$ except for $u_{i+1}$. Note that every $c$-arborescence is
cellular.

\begin{cor}\label{cor:cellular_arb}
    If $\arb$ is a coherent multi-arborescence, then $\arb$ is cellular.
\end{cor}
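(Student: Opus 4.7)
The plan is to use the max-slope projection to simultaneously identify the cellular string built from $\arb$ in the statement with the coherent cellular string attached to the defining weight $\WT$, and to verify the inclusion $\arb(v) \subseteq V(F_i)$ directly. Since $\arb$ is coherent, write $\arb = \arb_{P,c}^\MS(\WT)$ for some weight $\WT$ and consider the linear projection $\pi : \R^d \to \R^2$, $\pi(x) = (c^tx, \WT^tx)$. The upper boundary of the polygon $\pi(P)$ from $\pi(\vmopt)$ to $\pi(\vopt)$ is concave with edges $e_0,\ldots,e_{k-1}$ of strictly decreasing slopes $m_0 > m_1 > \cdots > m_{k-1}$. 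Define $\tilde F_i := \pi^{-1}(e_i) \cap P$; this is precisely the coherent cellular string of $(P,c)$ indexed by $\WT$ in the sense of~\cite{BSFiberPoly} whose existence is guaranteed by Proposition~\ref{prop:MS_Mon}.

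The heart of the argument is the following local description of $\arb$: for every $v \in V(\tilde F_i)$ with $v \neq \tilde F_i^c$,
\[
    \arb(v) \ = \ \Nb{P,c}{v} \cap V(\tilde F_i) \, .
\]
Indeed, $\pi(v)$ lies on $e_i$ and, by concavity of the upper boundary, the line of slope $m_i$ through $\pi(v)$ supports $\pi(P)$ from above to the right of $\pi(v)$. Therefore $\tfrac{\WT^t(u-v)}{c^t(u-v)} \le m_i$ for every $c$-improving neighbor $u$, with equality forcing $\pi(u)$ onto $e_i$, hence $u \in \tilde F_i$. The reverse inclusion is immediate because any two vertices of $\tilde F_i$ differing in the $c$-direction yield slope exactly $m_i$. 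In particular, the max-slope argmax at $v$ equals the set of $c$-improving neighbors of $v$ that lie in $\tilde F_i$.

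I would then identify $\tilde F_i$ with the face $F_i$ from the statement by induction on $i$. For $i=0$: $u_0 = \vmopt = \tilde F_0^{-c}$, and by the local description $\arb(\vmopt)$ consists of all $c$-improving neighbors of $\vmopt$ in $\tilde F_0$. Since $\vmopt$ is the $c$-minimum of $\tilde F_0$ and $c$ is generic, every edge of $\tilde F_0$ at $\vmopt$ is $c$-improving, so these neighbors generate the tangent cone of $\tilde F_0$ at $\vmopt$; consequently $\{\vmopt\} \cup \arb(\vmopt)$ affinely spans $\tilde F_0$, giving $F_0 = \tilde F_0$. The inductive step is identical: $F_{i-1} = \tilde F_{i-1}$ implies $u_i = F_{i-1}^c = \tilde F_{i-1}^c = \tilde F_i^{-c}$, and the same spanning argument at $u_i$ yields $F_i = \tilde F_i$. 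With the strings identified, the local description gives exactly the cellular condition $\arb(v) \subseteq V(F_i)$ for every $v \in V(F_i) \setminus \{u_{i+1}\}$.

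The main obstacle is the local description, specifically ensuring that equality in the slope bound forces $\pi(u)$ onto $e_i$. Because $\arb$ can come from a non-generic weight $\WT$, multiple neighbors of $v$ may tie in the max-slope criterion, and one must rule out that such ties can arise from vertices $u$ with $\pi(u)$ strictly below the upper boundary of $\pi(P)$ or on a different upper edge. This is exactly where strict decrease of consecutive slopes---i.e.\ that distinct edges of a convex polygon have distinct slopes---is essential.
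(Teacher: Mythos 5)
Your proof is correct and follows essentially the same route as the paper's: project via $\pi(x)=(c^tx,\WT^tx)$, identify the faces $F_i$ with the preimages of the upper edges of the polygon $\pi(P)$, and deduce $\arb(v)\subseteq V(F_i)$ from the fact that the maximal slope at any $v\in V(F_i)\setminus\{u_{i+1}\}$ is exactly the slope of $e_i$ and is attained only by neighbors mapping into $e_i$. The only difference is that you spell out the identification $F_i=\tilde F_i$ (which the paper calls "straightforward to verify") with a careful tangent-cone/induction argument, and you prove the slightly stronger equality $\arb(v)=\Nb{P,c}{v}\cap V(\tilde F_i)$ rather than just the inclusion needed for cellularity.
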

\begin{proof}
    \newcommand\oP{\bar{P}}%
    \newcommand\ov{\bar{v}}%
    \newcommand\ovopt{\ov_{opt}}%
    \newcommand\ovmopt{\ov_{-opt}}%
    Let $\arb$ be a coherent $c$-multi-arborescence with corresponding face $F =
    (\PP_{P,c}^\MS)^\WT$ for some weight $\WT$. Consider the linear projection
    $\pi : \R^d \to \R^2$ given by $\pi(x) = (c^tx,\WT^tx)$. The polygon $\oP$
    has the two distinguished vertices $\ovmopt = \pi(\vmopt)$ and $\ovopt =
    \pi(\vopt)$ that minimize and maximize the first coordinates,
    respectively. Let $ \ovmopt = \ov_0 \ov_1 \dots \ov_k = \ovopt$ be the
    upper path with respect to the second coordinate. It is straightforward to
    verify that $F_i = \pi^{-1}([\ov_i,\ov_{i+1}])$ yields the cellular string
    as constructed above. If $v \in V(F_i) \setminus u_{i+1}$ then $v' =
    \arb(v)$ if and only if the slope of the segment $[\ov,\ov']$ is maximal
    among all segments $[\ov,\ov'']$ for $\ov'' \in \Nb{P,c}{v}$. Clearly this
    slope is at most that of $[\ov_i,\ov_{i+1}]$ and equal whenever $v' \in
    V(F_i)$. This shows that $\arb$ is cellular.
\end{proof}

Note that there does not seem to be a natural map from 
general multi-arborescences of $(P,c)$ to the Baues poset.

In~\cite[Example~5.4]{BSFiberPoly} it is shown that the monotone path polytope
of the $d$-cube $C_d = [0,1]^d$ with respect to $c = (1,\dots,1)$ is the
permutahedron $\Pi_{d-1} = \conv\{ (\sigma(1),\dots,\sigma(d)) : \sigma \text{
    permutation} \}$. In fact for any generic $c$, $\Mon_c(P)$ will be
combinatorially isomorphic to $\Pi_{d-1}$. It is remarkable that the max-slope
pivot polytope of the cube is also combinatorially isomorphic a permutahedron;
cf.~Example~\ref{ex:cubes}. This is not a coincidence. Recall that a polytope
$P \subset \R^d$ is a \Def{zonotope} if there are $t, z_1,\dots,z_n \in \R^d$
such that 
\[
    -t + P \ = \ [-z_1,z_1] + [-z_2,z_2] + \cdots + [-z_n,z_n] \, .
\]
Moreover, two polytopes $P,Q \subset \R^d$ are \Def{normally equivalent} if
$Q$ is a weak Minkowski summand of $P$ and $P$ is a weak Minkowski summand of
$Q$.

\begin{theorem}\label{thm:MS_zono}
    Let $P$ be a  polytope and $c$ a generic objective
    function.  If $P$ is a zonotope, then $\Mon_c(P)$ and $\PP_{P,c}^\MS$ are
    normally equivalent.
\end{theorem}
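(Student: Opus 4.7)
My plan is to apply Shephard's weak-summand criterion in the direction opposite to the proof of Proposition~\ref{prop:MS_Mon}. Since that proposition already shows $\Mon_c(P)$ is a weak Minkowski summand of $\PP_{P,c}^\MS$, normal equivalence will follow once I show the reverse: for every weight $\WT$ such that $(\Mon_c(P))^\WT$ is a vertex, the face $(\PP_{P,c}^\MS)^\WT$ is also a vertex, i.e., the max-slope rule with weight $\WT$ makes a uniquely defined choice at every non-optimal vertex of $P$.

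I will first set up convenient coordinates. After a translation and possibly replacing some generators by their negatives, I may assume $P = \vmopt + \sum_{i=1}^{n}[0,z_i]$ with $c^t z_i > 0$ for all $i$; this transformation affects neither $\PP_{P,c}^\MS$ nor $\Mon_c(P)$ up to translation. The key computation is then the following: at any vertex $v$ of $P$, each $c$-improving edge has the form $v \to v + z_i$ for some $i$ in a subset $I^+(v) \subseteq [n]$, and the associated max-slope ratio is
\[
    \frac{\WT^t z_i}{c^t z_i} \ =: \ \rho_i,
\]
which depends on the generator $z_i$ alone and not on the vertex $v$. Hence the max-slope rule at $v$ always selects the index $i \in I^+(v)$ maximizing $\rho_i$.

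The second ingredient I will use is the classical description of $\Mon_c(P)$ for a zonotope (see \cite{BSFiberPoly, BKS}): it is itself a zonotope, normally equivalent to $\sum_{i=1}^{n}[0,z_i/(c^t z_i)]$, whose vertices are in bijection with the total orderings of $\{z_1,\dots,z_n\}$ induced by linear functionals, i.e., with the orderings of the $\rho_i$. Consequently, $(\Mon_c(P))^\WT$ is a vertex precisely when the values $\rho_1,\dots,\rho_n$ are pairwise distinct. Under this hypothesis the $\argmax$ defining the max-slope choice is uniquely attained at every vertex, which is exactly what is required to make $(\PP_{P,c}^\MS)^\WT$ a vertex. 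Combining the two weak-summand inclusions gives the normal equivalence.

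The main obstacle I anticipate is ensuring that the classical description of $\Mon_c(P)$ is available in the generality needed here; it is standard when the generators are in general position, and mild extra care is required when several $z_i$ are parallel or lie on a common hyperplane. I would handle this by grouping parallel generators into classes, on which the $\rho_i$ automatically coincide, and observing that the cellular-string structure of $\Mon_c(P)$ reduces to the zonotope in which each such class contributes a single edge; the same unique-maximum argument then applies verbatim at every vertex of $P$.
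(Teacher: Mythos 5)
Your approach is essentially the same as the paper's: you decompose $\PP_{P,c}^\MS$ into the Minkowski summands $\PP_{P,c}^\MS(v)$, observe that for a zonotope the max-slope ratio $\rho_i = \WT^t z_i/c^t z_i$ is a function of the generator $z_i$ alone (this is the crux, and you state it cleanly), and then match the normal fan of the pivot polytope with that of $\Mon_c(P)$. The paper's proof makes the same observation implicitly by noting that each $\PP_{P,c}^\MS(v)$ is the convex hull of a subset of a single fixed list of normalized generators.

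There is, however, a concrete error in your second ingredient. You assert that $\Mon_c(P)$ is normally equivalent to $\sum_{i=1}^{n}\bigl[0,\ z_i/(c^t z_i)\bigr]$. That zonotope is just $P$ with each generator rescaled, so it has the \emph{same} normal fan as $P$; its vertices are indexed by sign vectors, not by total orderings, and the claim that its vertices biject with orderings of the $\rho_i$ is false for this particular polytope. What the Billera--Sturmfels results (their Lemma~2.3 and Theorem~4.1) actually give is that $\Mon_c(P)$ is normally equivalent to the \emph{difference} zonotope $\sum_{i<j}\bigl[\bar z_i - \bar z_j,\ \bar z_j - \bar z_i\bigr]$ with $\bar z_i := z_i/(c^t z_i)$, whose normal fan is the hyperplane arrangement $\{\WT : \WT^t\bar z_i = \WT^t\bar z_j\}_{i<j}$, and whose vertices \emph{do} correspond to total orderings of the $\rho_i$. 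Since the rest of your argument relies only on this vertex--ordering correspondence (which is correct), the proof survives once you replace the wrong zonotope by this one. One small step should also be made explicit: to conclude that $(\PP_{P,c}^\MS)^\WT$ is a vertex \emph{only if} all $\rho_i$ are pairwise distinct, you need that every pair $\{i,j\}$ occurs together as a pair of $c$-improving edge directions at some vertex of $P$. For a zonotope this is immediate: take any generic $\WT'$ with $(\WT')^t z_i>0$ and $(\WT')^t z_j>0$ and let $v=P^{-\WT'}$; both $z_i$ and $z_j$ are improving at $v$.
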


\begin{proof}
    In light of Proposition~\ref{prop:MS_Mon}, we only have to show that
    $\Pi_{P,c}^\MS$ is a weak Minkowski summand of $\Mon_c(P)$.
    From~\eqref{eqn:PP_sum}, we see that it suffices to show that
    $\Pi_{P,c}^\MS(v)$ is a weak Minkowski summand of $\Mon_c(P)$ for all $v
    \neq \vopt$.

    We may assume that $P = [-z_1,z_1] + [-z_2,z_2] + \cdots + [-z_n,z_n]$ and
    that $c^tz_i > 0$ for all $i$.  Since $P$ is a linear projection of $C_d =
    [0,1]^d$, it follows from Lemma~2.3 and Theorem~4.1 of~\cite{BSFiberPoly}
    that the monotone path polytope $\Mon_c(P)$ is normally equivalent to the
    zonotope
    \[
        \tilde{\Mon}_c(P) \ = \ \sum_{i < j} [z_i - z_j, z_j - z_i] \, .
    \]
    If $u,v$ are adjacent vertices of $P$, then $u-v = \pm z_j$ for some $j$.
    Thus for $v \neq \vopt$ there is $J \subseteq [n]$ such that 
    \[
        \Pi_{P,c}^\MS(v) \ = \ \conv \{ z_j : j \in J\} \, .
    \]
    For a generic $\WT \in \R^d$ the vertex $\tilde{\Mon}_c(P)^\WT$ is
    determined by the permutation $\sigma$ such that $\WT^t z_{\sigma(1)} >
    \WT^t z_{\sigma(2)} > \cdots > \WT^t z_{\sigma(n)}$. However, this shows
    that $(\Pi_{P,c}^\MS(v))^\WT = z_{\sigma(k)}$, where $k = \min
    \sigma^{-1}(J)$. Hence $(\Pi_{P,c}^\MS(v))^\WT$ is a vertex whenever
    $\tilde{\Mon}_c(P)^\WT$ is, which proves the claim.
\end{proof}

Theorem~\ref{thm:MS_zono} gives a new way of computing monotone path polytopes
of zonotopes. In particular, it says that for every coherent monotone path
there is a \emph{unique} extension to a coherent arborescence.

A polytope $P$ is a \Def{belt polytope}~\cite{Bolker} if $P$ is normally
equivalent to a zonotope. Equivalently, if the normal fan of $P$ is given by a
hyperplane arrangement; cf.~\cite[Ch.~7]{ziegler}.  The next result implies
that Theorem~\ref{thm:MS_zono} can actually be extended to belt polytopes. 

\begin{theorem}
    Let $P,P' \subset \R^d$ be polytopes and $c$ a generic objective function.
    If $P$ is normally equivalent to $P'$, then $\PP_{P,c}^\MS =
    \PP_{P',c}^\MS$.
\end{theorem}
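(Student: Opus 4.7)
My plan is to use the Minkowski sum decomposition from Equation~\eqref{eqn:PP_sum} and observe that the max-slope normalization is scale-invariant on edges, so only the \emph{directions} of the edges and the combinatorial structure of $G(P)$ matter — precisely the data preserved by normal equivalence.

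First, I would record what normal equivalence gives us. Since $P$ and $P'$ have the same normal fan, there is a bijection $\phi : V(P) \to V(P')$ that extends to a combinatorial isomorphism on faces, and for every edge $uv \in E(P)$, the corresponding edge $\phi(u)\phi(v) \in E(P')$ is parallel to $uv$ with the same orientation, i.e., there exists $\lambda = \lambda_{uv} > 0$ with
\[
  \phi(u) - \phi(v) \ = \ \lambda_{uv} (u - v).
\]
In particular $c^t(\phi(u) - \phi(v)) = \lambda_{uv} \cdot c^t(u-v)$, which has the same sign. Hence $\phi$ sends the unique sink $\vopt$ of $(P,c)$ to the unique sink $\vopt'$ of $(P',c)$ and, more generally, $\phi(\Nb{P,c}{v}) = \Nb{P',c}{\phi(v)}$ for every $v \neq \vopt$.

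Next, I would compute the max-slope Minkowski summand at a corresponding pair $v,\phi(v)$. For any $u \in \Nb{P,c}{v}$, the scale-invariance of the max-slope normalization gives
\[
  \frac{\phi(u) - \phi(v)}{c^t(\phi(u) - \phi(v))} \ = \ \frac{\lambda_{uv}(u-v)}{\lambda_{uv} c^t(u-v)} \ = \ \frac{u-v}{c^t(u-v)}.
\]
Since the sets $\Nb{P,c}{v}$ and $\Nb{P',c}{\phi(v)}$ are in bijection via $\phi$, this identity applied to every $c$-improving neighbor yields
\[
  \PP_{P',c}^\MS(\phi(v)) \ = \ \PP_{P,c}^\MS(v)
\]
as subsets of $\R^d$ (not merely as combinatorially equivalent polytopes).

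Finally, summing over all $v \neq \vopt$ (equivalently, all $\phi(v) \neq \vopt'$) and using the decomposition $\PP_{P,c}^\MS = \sum_{v \neq \vopt} \PP_{P,c}^\MS(v)$ from~\eqref{eqn:PP_sum}, one concludes $\PP_{P,c}^\MS = \PP_{P',c}^\MS$ as claimed. There is no real obstacle here beyond being careful with bookkeeping; the crux is the scale-invariance of the max-slope normalization under positive rescaling of edge vectors, which makes each Minkowski summand depend only on combinatorial and normal-fan data.
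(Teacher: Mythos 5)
Your proof is correct and is essentially the paper's own argument: establish the bijection on vertices induced by normal equivalence, note that corresponding edges are positive scalar multiples of each other so improving neighbors map to improving neighbors, observe that the scale factor cancels in the max-slope normalization $\frac{u-v}{c^t(u-v)}$, and conclude the summands $\PP_{P,c}^\MS(v) = \PP_{P',c}^\MS(\phi(v))$ coincide so the Minkowski sums in~\eqref{eqn:PP_sum} are equal. Your write-up is a bit more explicit about the bookkeeping, but the route is the same.
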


\begin{proof}
    Let $v \in V(P)$. If $v = P^\WT$, then $P^\WT = v'$ is a vertex that is
    independent of $\WT$. This yields a bijection $V(P) \to V(P')$. Moreover,
    it follows from normal equivalence that if $u,v$ are adjacent vertices of
    $P$, then $u',v'$ are adjacent in $P'$ and  $u-v = \lambda( u'-v' )$ for
    some $\lambda > 0$. Thus $u \in \Nb{P,c}{v}$ if and only if 
    $u' \in \Nb{P',c}{v'}$ and $\frac{u-v}{c^t(u-v)} =
    \frac{u'-v'}{c^t(u'-v')}$. Now \eqref{eqn:PPv} yields 
    $\PP_{P,c}^\MS(v) = \PP_{P',c}^\MS(v')$ and the claim follows
    from~\eqref{eqn:PP_sum}.
\end{proof}

\subsection{Neighbotopes and Sweep Polytopes}\label{sec:NP_sweep}

In this section, we relate the neighbotopes with respect to
greatest-improvement pivot rule to another class of well-known polytopes, the sweep polytopes \cite{Sweep}. 
Let $p_1,\dots,p_n \in \R^d$ be a configuration of points. A
permutation $\sigma$ of $[n]$ is called a \Def{sweep} if there is a generic
linear function $c \in \R^d$ such that 
\[
    c^tp_{\sigma^{-1}(1)} < c^tp_{\sigma^{-1}(2)} < \cdots <
    c^tp_{\sigma^{-1}(n)} 
\]
The \Def{sweep polytope}, introduced by Padrol and Philippe in~\cite{Sweep},
captures the sweeps of a point configuration and is defined as
\[
    \SP(p_1,\dots,p_n) \ := \ \sum_{i < j} [p_i-p_j,p_j-p_i]
\]
If $p_1,\dots,p_n$ are the vertices of a polytope $P$, then the sweep is
related to line shellings of the dual to $P$. It was studied in~\cite{GS} under the name \emph{shellotope}.

Recall from the introduction that for a polytope $P \subset \R^d$ and a
normalization $\NO$, the set of \Def{normalized edge directions} is
$\ED^\NO(P) = \{ \frac{u-v}{\NO(u-v)} : uv \in E(P) \}$. If $c$ is a generic
objective function, then $\ED^\NO(P,c) = \{ \frac{u-v}{\NO(u-v)} : uv \in
E(P), c^tu > c^tv \}$ is the set of \Def{$c$-improving} edge directions.  If
$\NO \equiv 1$, then we also write $\ED(P) = \ED^\NO(P)$.  Note that $z \in
\ED(P)$ if and only if $-z \in \ED(P)$.

\begin{proof}[Proof of Theorem \ref{thm:NP_sweep}]
    Let $v \in V(P)$ be a vertex. It follows from the definition that the
    vertices of $\PP_{P,c}^\NO(v)$ are a subset of $\ED^\NO(P,c)$. Hence if we
    have a total order on $\ED^\NO(P,c)$ induced by a linear function $\WT$,
    then this determines the unique maximizer $\PP_{P,c}^\NO(v)^\WT$ for all
    $v$ and therefore a vertex of $\PP_{P,c}^\NO$. Since $\WT$ induces a total
    ordering on $\ED^\NO(P,c)$ if and only if $\SP(\ED^\NO(P,c))^\WT$ is a
    vertex, this proves the first claim.

    The second claim follows in the same manner.
\end{proof}

\begin{prop}\label{prop:NP_equiv}
    Let $\NO$ be a normalization with $\NO(x) > 0$ for $x \neq 0$.
    Any polytope $P$ is a weak Minkowski summand of $\NP[\GI]{P}$. Normal
    equivalence holds precisely for $2$-neighborly polytopes.
\end{prop}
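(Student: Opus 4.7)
The plan is to split the proof into the two assertions of the proposition. For the weak Minkowski summand claim, I would apply Shephard's criterion (as in the proof of Proposition~\ref{prop:MS_Mon}) and argue the contrapositive: suppose $P^\WT$ is a face $F$ of positive dimension; choose any vertex $v \in V(F)$ and a neighbor $u \in V(F) \cap \Nb{P}{v}$. Then $\WT^t(u-v) = 0$, so in $\NP[\GI]{P}(v) = \conv(\{0\} \cup \{u'-v : u' \in \Nb{P}{v}\})$ both $0$ and $u-v$ are tied as $\WT$-maximizers, whence $\NP[\GI]{P}(v)^\WT$ is not a vertex; by the Minkowski-sum decomposition~\eqref{eqn:N}, neither is $\NP[\GI]{P}^\WT$.

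For the $(\Leftarrow)$ direction of normal equivalence: if $P$ is $2$-neighborly then $\Nb{P}{v} \cup \{v\} = V(P)$ for every vertex $v$, so $\NP[\GI]{P}(v) = -v + P$. Summing over $v$ yields $\NP[\GI]{P} = -\sum_v v + nP$ with $n = |V(P)|$, a translate of $nP$ and thus homothetic and normally equivalent to $P$.

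The reverse direction is the substance of the proposition; I would prove it by contrapositive, showing that $\NP[\GI]{P}$ has strictly more vertices than $P$ when $P$ is not $2$-neighborly (which precludes normal equivalence since normally equivalent polytopes have the same number of vertices). The key combinatorial ingredient is the lemma that \emph{there is a non-universal vertex $\hat v$ of $P$ with two $P$-neighbors $a, b$ such that $ab \notin E(P)$}. A vertex with two non-adjacent neighbors must exist, since otherwise every neighborhood in $G(P)$ would be a clique, making $G(P)$ locally complete and---being connected by Balinski's theorem---complete, contradicting non-$2$-neighborliness. Non-universality is arranged by iterating the argument on $G(P)$ with its universal vertices removed, which remains connected (Balinski again) and non-complete. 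Then I pick $c \in V(P) \setminus (\Nb{P}{\hat v} \cup \{\hat v\})$ and construct weights $\WT_a, \WT_b \in \Ncone_P(c)^\circ$ whose induced coherent arborescences satisfy $\br_{P}^{\GI}(\WT_a)(\hat v) = a$ and $\br_{P}^{\GI}(\WT_b)(\hat v) = b$. Both arborescences have root $c$ and differ at $\hat v$, hence yield two distinct vertices of $\NP[\GI]{P}$ corresponding to weights in $\Ncone_P(c)$. Combined with at least one coherent arborescence per other root in $V(P)$, this gives $|V(\NP[\GI]{P})| \ge |V(P)| + 1$.

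The main obstacle is establishing the existence of $\WT_a$ (and $\WT_b$ by symmetry). This amounts to feasibility of the open linear system $\WT^t(c - u) > 0$ for $u \in V(P)\setminus\{c\}$ together with $\WT^t(a - u') > 0$ for $u' \in (\Nb{P}{\hat v} \cup \{\hat v\}) \setminus \{a\}$. By Gordan's theorem, infeasibility would produce a non-trivial non-negative combination $\sum_u \lambda_u (c - u) + \sum_{u'} \mu_{u'}(a - u') = 0$. Comparing the coefficient of $c$---which appears only in the first family, since $c \notin \Nb{P}{\hat v} \cup \{\hat v\}$---forces $\sum_u \lambda_u = 0$ and so $\lambda \equiv 0$; then comparing the coefficient of $a$ forces $a$ to be a convex combination of the remaining elements of $\Nb{P}{\hat v} \cup \{\hat v\}$, contradicting that $a$, being a vertex of $P$, is a vertex of every sub-polytope of $P$ containing it, in particular of $\conv(\Nb{P}{\hat v} \cup \{\hat v\})$.
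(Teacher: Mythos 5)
Your handling of the weak Minkowski summand claim (contrapositive of Shephard's criterion) and of the implication ``$2$-neighborly $\Rightarrow$ normally equivalent'' are correct and match the paper. The gap is in the converse direction.

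The combinatorial lemma you rely on---\emph{there is a non-universal vertex $\hat v$ with two neighbors $a,b \in \Nb{P}{\hat v}$ that are not adjacent}---is false. Take $P$ to be the bipyramid over a simplex $\Delta_{d-1}$ with $d \ge 3$: the $d$ base vertices are all universal in $G(P)$, while each of the two apexes has neighborhood exactly the set of base vertices, which is a clique. Hence every non-universal vertex has a clique neighborhood, although $P$ is not $2$-neighborly (the two apexes are non-adjacent). The step where you delete the universal vertices and invoke Balinski to keep the remainder connected is exactly where this breaks: Balinski gives $d$-connectivity, but the number of universal vertices can equal $d$, and removing $d$ vertices may disconnect the graph---for the bipyramid it leaves the two apexes isolated. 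Since your construction of two distinct arborescences rooted at $c$ depends on this $\hat v$ (and on the non-adjacent pair $a,b$ inside its neighborhood), the rest of the argument does not go through, even though the auxiliary Gordan-style feasibility step is itself fine.

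The paper avoids this obstacle by arguing directly about a single summand: pick non-adjacent $u,v \in V(P)$ and observe $u-v \notin \NP[\GI]{P}(v)$ (indeed, since $u$ is a vertex of $P$ it cannot be a convex combination of $v$ and neighbors of $v$), so $\NP[\GI]{P}(v)$ is a proper subpolytope of $-v+P$ with strictly fewer vertices; one then produces a linear functional $c$ with $\NP[\GI]{P}(v)^c$ a vertex but $\dim P^c > 0$, which by Shephard's criterion shows $\NP[\GI]{P}(v)$, and hence the Minkowski sum $\NP[\GI]{P}$, is not a weak Minkowski summand of $P$. If you want to keep your vertex-count strategy, you would need a correct replacement for the lemma (note the bipyramid does satisfy the conclusion you actually want, since both apex summands $\NP[\GI]{P}(A_i)$ fail to be weak summands of $P$); but as written the argument has a genuine hole.
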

\begin{proof}
    Let $c$ be a generic linear function.  It follows from convexity that $P^c
    = v$ if and only if $v$ has no $c$-improving neighbor. It follows from
    \eqref{eqn:Nhood} that $\NP[\GI]{P}(v)^c = \{0\}$. Since $\NP[\GI]{P} =
    \sum_v \NP[\GI]{P}(v)$, we see that if $(\NP[\GI]{P})^c$ is a vertex, then
    so is $P^c$. 

    A polytope $P$ is called \Def{$2$-neighborly} if any two vertices are
    adjacent.  If $P$ is $2$-neighborly, then $\NP[\GI]{P}(v) = -v + P$ and
    hence \[ \tfrac{1}{n} \NP[\GI]{P}(v) \ = \ b(P) + P \, , \] where $b(P) =
    \frac{1}{n} \sum V(P)$ is the barycenter of $P$. Now, assume that $P$ is
    not two neighborly and $u-v \not\in \NP[\GI]{P}(v)$. Then there is a
    linear function $c$ such that $\NP[\GI]{P}(v)^c$ is a vertex but $\dim P^c
    > 0$.
\end{proof}
The proof actually shows that $P$ is a weak Minkowski summand of $\NP[\NO]{P}$
for any normalization with $\NO(x) > 0$ whenever $x \neq 0$.

\begin{remark}
    It also follows from convexity that $P$ is a Minkowski summand of the edge
    zonotope $\EZ(P)$ and $\EZ(P)$ is by construction a Minkowski summand of
    $\SP(\ED(P))$. However, it is not true in general that $\EZ(P)$ is a weak
    Minkowski summand of $\NP[\GI]{P}$ nor the other way around: If $P =
    \Delta_{d-1}$, then $\EZ(P) = \Pi_{d-1}$ is a permutahedron while
    $\NP[\GI]{\Delta_{d-1}} = d \Delta_{d-1}$ up to translation. If $P$ is a
    zonotope, then $\EZ(P)$ is normally equivalent to $P$ but $\NP[\GI]{P}$
    can have more vertices than $P$.
\end{remark}

For the $d$-cube we have $\ED(C_d) = \{\pm e_1,\dots, \pm e_d\}$ and
$\SP(\ED(C_d))$ is the type-$B$ permutahedron with respect to $(1,\dots,d)$;
see~\cite[Sect.~2.2.2]{Sweep}.  In contrast to Theorem~\ref{thm:MS_zono}, it
is in general not true that the GI-neighbotope and the sweep polytope of edge directions are normally equivalent.

\begin{example}\label{ex:GINP_zono}
    Consider the zonotope $Z \subset \R^2$ for the vectors $(\pm1,1)$ and
    $(\frac{1}{2},1)$. Then $\NP[\GI]{Z}$ is a zonotope with $12$ vertices
    whereas $\SP(\ED(Z))$ has $14$ vertices.

    For the zonotope $Z$ generated by the vectors $(1,0,0), (0,1,0), (0,0,1),
    (1,1,4)$, one can check that $\NP[\GI]{Z}$ is not even a belt polytope.
\end{example}

We now show that for a very interesting class of zonotopes related to
crystallographic reflection reflection groups, normal equivalence nevertheless
holds. We refer the reader to~\cite{BjornerBrenti,Humphreys} for
more information about the geometry and combinatorics of root systems.

A finite nonempty set $\RoSy \subset \R^n \setminus \{0\}$ is a \Def{root
system} if $\RoSy \cap \R \alpha = \{-\alpha, \alpha\}$ for all $\alpha \in
\RoSy$, and $s_\alpha(\RoSy) = \RoSy$ where $s_\alpha(x) = x -
2\frac{\alpha^tx}{\alpha^t\alpha}\alpha$ is the reflection in the hyperplane
$\alpha^\perp$.  The root system is \Def{irreducible} if there is no partition
$\RoSy = \RoSy' \uplus \RoSy''$ into nonempty subsets such that $\alpha^t
\beta = 0$ for all $\alpha \in \RoSy', \beta \in \RoSy''$.  The group $W
\subset O(\R^n)$ generated by the reflections $s_\alpha$ for $\alpha \in
\RoSy$ is finite and called a \Def{reflection group}.
Define the zonotope associated to $\RoSy$
\[
    Z_\RoSy \ := \ \frac{1}{2} \sum_{\alpha \in \RoSy} [-\alpha,\alpha] \, .
\]
By construction, $Z_\RoSy$ is $W$-invariant and has edge directions
$\ED(Z_\RoSy) = \RoSy$. The sweep polytope is then
\[
    \SP(\ED(Z_\RoSy)) \ = \ \SP(\RoSy) \ = \ \sum_{\alpha,\beta \in \RoSy}
    [\alpha-\beta,\beta-\alpha] \, .
\]
The greatest improvement pivot rule is sensitive to the length of edges and in
this generality, the lengths of roots is not a meaningful invariant. A root
system is \Def{crystallographic} if $\frac{2\alpha^t\beta}{\alpha^t\alpha} \in
\Z$ for all $\alpha \in \RoSy$. Equivalently, if the group $W$ stabilizes the
lattice spanned by $\RoSy$. In this case $W$ is called a Weyl group.
Crystallographic root systems are completely classified;
see~\cite[Chapter~2]{Humphreys} and Appendix~\ref{sec:comp_roots}. In
particular, the zonotope $Z_\RoSy$ is unique up to rigid motion and homothety.

\begin{theorem}\label{thm:NP_roots}
    Let $\RoSy \subset \R^n$ be an irreducible crystallographic root system.
    Then $\NP[\GI]{Z_\RoSy}$ is normally equivalent to $\SP(\ED(Z_\RoSy))$.
\end{theorem}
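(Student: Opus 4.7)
The plan is to establish normal equivalence by proving the converse of the inclusion already furnished by Theorem~\ref{thm:NP_sweep}: that result gives $\NP[\GI]{Z_\RoSy}$ as a weak Minkowski summand of $\SP(\ED(Z_\RoSy)) = \SP(\RoSy)$, so it suffices to show that, conversely, $\SP(\RoSy)$ is a weak Minkowski summand of $\NP[\GI]{Z_\RoSy}$. By Shephard's criterion (as invoked in the proof of Proposition~\ref{prop:MS_Mon}), this reduces to the implication: whenever $(\NP[\GI]{Z_\RoSy})^\WT$ is a vertex, so is $\SP(\RoSy)^\WT$. I would argue the contrapositive.

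Assume $\SP(\RoSy)^\WT$ is not a vertex. Since $\SP(\RoSy)$ is the zonotope with generators the differences of pairs of roots, this is equivalent to the existence of distinct $\alpha, \beta \in \RoSy$ with $\WT^t\alpha = \WT^t\beta$. The Minkowski decomposition $\NP[\GI]{Z_\RoSy} = \sum_{v \in V(Z_\RoSy)} \NP[\GI]{Z_\RoSy}(v)$ from the proof of Theorem~\ref{thm:NP} means that $(\NP[\GI]{Z_\RoSy})^\WT$ fails to be a vertex as soon as a single summand $\NP[\GI]{Z_\RoSy}(v)^\WT$ does, so it suffices to produce one vertex $v$ of $Z_\RoSy$ that witnesses a tie.

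Here I would exploit the combinatorial structure of the Coxeter zonotope. Its vertices correspond bijectively with the chambers of the reflection arrangement of $W$, equivalently with the choices of positive system $\Pos \subseteq \RoSy$. At the vertex $v_\Pos$ associated to $\Pos$, the incident edge directions are exactly the simple roots $\Simp_\Pos$ of $\Pos$, whence
\[
    \NP[\GI]{Z_\RoSy}(v_\Pos) \ = \ \conv(\Simp_\Pos \cup \{0\}) \, .
\]
This polytope loses a vertex at $\WT$ precisely when two of its points tie for the maximum, so it suffices to select $\Pos$ so that $\alpha, \beta \in \Simp_\Pos$ and both strictly $\WT$-dominate the other elements of $\Simp_\Pos \cup \{0\}$. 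After possibly replacing $(\alpha,\beta)$ by $(-\alpha,-\beta)$ (legitimate since $\RoSy = -\RoSy$) one may assume $\WT^t\alpha = \WT^t\beta \ge 0$, and then fix a positive system in which both roots are positive. In that system $\alpha$ and $\beta$ are incomparable in the root poset — any strict height inequality, combined with a generic perturbation of $\WT$ within its tie-hyperplane, would break the equality. Theorem~\ref{thm:comp_roots} then yields a simple system $\Simp$ whose only positive roots are $\alpha$ and $\beta$, and the vertex of $Z_\RoSy$ associated to the resulting chamber witnesses the desired tie.

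The main obstacle lies inside Theorem~\ref{thm:comp_roots} itself, whose proof requires the classification of irreducible crystallographic root systems and case-by-case analysis in the exceptional types; this is the genuine piece of root-system combinatorics driving the argument. A secondary technical point that I would handle separately is the degenerate case $\beta = -\alpha$, which forces $\WT^t\alpha = 0$: here one instead selects a simple system containing $\alpha$ in which every other simple root has nonpositive $\WT$-value, so that the tie in $\NP[\GI]{Z_\RoSy}(v_\Pos)^\WT$ is realized between $\alpha$ and the origin rather than between two roots.
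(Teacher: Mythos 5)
Your proposal follows the same route as the paper: reduce via Shephard's criterion to showing that whenever $\SP(\RoSy)^\WT$ fails to be a vertex there is a Coxeter-zonotope vertex $v$ whose summand $\NP[\GI]{Z_\RoSy}(v) = \conv(\Simp' \cup \{0\})$ also fails to produce a vertex at $\WT$, and the existence of the required simple system $\Simp'$ (containing the tied pair $\alpha,\beta$ as its only $\Pos_\WT$-positive elements) is exactly Theorem~\ref{thm:comp_roots}. The only substantive difference is bookkeeping: the paper uses $W$-invariance of both polytopes to restrict to a weight in the interior of the fixed fundamental cone and then works with the fixed $\Pos$ and $\Simp$, whereas you let $\Pos_\WT$ vary with $\WT$; these are interchangeable under the $W$-action.

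Two small points worth tightening. First, your justification that $\alpha$ and $\beta$ are incomparable in $\Pos_\WT$ via ``a generic perturbation of $\WT$ within its tie-hyperplane'' is circular as phrased; the direct argument is cleaner: if $\beta - \alpha$ were a nonnegative combination of simple roots of $\Pos_\WT$, all of which have strictly positive $\WT$-value, then $\WT^t\beta > \WT^t\alpha$, a contradiction. Second, your separate handling of the degenerate case $\beta = -\alpha$ is both incomplete (it does not cover $\WT$ vanishing on some root $\gamma \notin \{\pm\alpha,\pm\beta\}$, where $\Pos_\WT$ is not a positive system) and unnecessary: as the paper observes, $Z_\RoSy$ is a weak Minkowski summand of both polytopes, so if $\WT$ lies on a wall of the Coxeter arrangement then $(\NP[\GI]{Z_\RoSy})^\WT$ is already not a vertex and there is nothing to check. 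Restricting at the outset to $\WT$ generic with respect to $\RoSy$ dispenses with all such cases at once.
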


Let $c \in \R^n$ be generic so that $\RoSy \cap \{ x : c^tx = 0\} = \emptyset$.
The \Def{positive system} $\Pos \subset \RoSy$ associated to $c$ is 
$\Pos := \Phi \cap \{ x : c^tx > 0\}$ and we can rewrite
\[
    Z_\Phi \ = \ \sum_{\alpha \in \Pos} [-\alpha,\alpha] \, .
\]
The sweep polytope is clearly $W$-invariant and can be rewritten as 
\[
    \SP(\ED(\RoSy)) \ = \ \sum_{\alpha,\beta \in \RoSy}
    [\alpha-\beta,\beta-\alpha]
    \ = \ 
    2 \sum_{\alpha,\beta \in \Pos} [\alpha-\beta,\beta-\alpha]
     + \sum_{\alpha,\beta \in \Pos} [-\alpha-\beta,\alpha+\beta] \, .
\]
Let $\Simp \subseteq \Pos$ be the unique minimal set of generators of the cone
$C := \cone(\Pos)$, called the \Def{simple system} of $\Pos$. Let $\vopt =
\sum_{\alpha \in \Pos}\alpha$ be the unique maximizer of $Z_\RoSy$ for the
linear function $x \mapsto c^tx$. Then
\[
    \NP[\GI]{Z_\RoSy}(\vopt) \ := \ \conv \left\{ u-\vopt : u \in
        \Nb{Z_\RoSy}{\vopt} \cup \{\vopt\} \right\} \ = \ \conv( -\Simp \cup
        \{0\} ) \, .
\]
By construction $Z_\RoSy$ is invariant under $W$ and, in fact, $W$ acts simply
transitive on the vertices. It thus follows that
\[
    \NP[\GI]{Z_\RoSy} \ = \ \sum_{w \in W} \conv( -w\Simp \cup \{0\} ) \, 
\]
and hence $\NP[\GI]{Z_\RoSy}$ is also $W$-invariant.

The dual cone $C^\vee = \{ w \in \R^n : w^t \alpha \ge 0 \text{ for all }
\alpha \in \Pos\}$ is a fundamental domain for the action of $W$ on $\R^n$.
If we want to show that $\NP[\GI]{Z_\RoSy}$ is normally equivalent to
$\SP(\ED(\RoSy))$, then it suffices to show that for all $w \in C^\vee$, if
$\SP(\ED(\RoSy))^w$ is not a vertex, then $(\NP[\GI]{Z_\RoSy})^w$ is not a
vertex. In fact, $Z_\RoSy$ is a weak Minkowski summand of both polytopes and
$Z_\RoSy^w$ is not a vertex whenever $w \in \partial C^\vee$. Thus, we 
may restrict to $w \in \int(C^\vee)$. Note that $c \in \int(C^\vee)$ and
since $\Pos$ and hence $\Simp$ are unchanged if we replace $c$ by some other
$c' \in \int(C^\vee)$, we may as well assume $w = c$.

As a first observation, note that $[-\alpha-\beta,\alpha+\beta]^c =
\alpha+\beta$ and hence
\[
    \SP(\ED(\RoSy))^c \ = \ 2\vopt + 
    2 \sum_{\alpha,\beta \in \Pos} [\alpha-\beta,\beta-\alpha]^c \, .
\]
The cone $C$ induces a partial order on $\R^n$ by $x \preceq y$ if $y-x \in
C$. If $\RoSy$ is crystallographic, then every $\alpha \in \Pos$ is a
nonnegative integer linear combination of simple roots. Hence $\alpha
\preceq \beta$ for $\alpha,\beta \in \Pos$ if and only if $\beta - \alpha =
\sum_{\gamma \in \Simp} c_\gamma \gamma$ where $c_\gamma \in \Z_{\ge 0}$. The
poset $(\Pos,\preceq)$ is called the (positive) \Def{root poset} of $\RoSy$.
Two roots $\alpha, \beta$ are \Def{incomparable} if $\alpha - \beta$ as well
as $\beta - \alpha$ are not contained in $C$. This implies that there is some
$t \in \R^n$ such that 
\[
    \SP(\ED(\RoSy))^c \ = \ t + 
    2 \sum_{\substack{\alpha,\beta \in \Pos\\ \alpha,\beta \text{
        incomparable}}} [\alpha-\beta,\beta-\alpha]^c \, .
\]
We note that if $\SP(\ED(\RoSy))^c$ is not a vertex, then there is some pair
of incomparable roots $\alpha, \beta \in \Pos$ with $c^t\alpha = c^t\beta$.

On the other hand, we observe that for $w \in W \setminus \{e\}$ and $v =
w\vopt$ the corresponding vertex of $Z_\RoSy$, we have 
\[
    \NP[\GI]{Z_\RoSy}(v)^c \ = \
    \conv( -w\Simp \cup \{0\} )^c \ = \ 
    \conv( -w\Simp \cap \Pos )^c.
\]
Indeed, if $w \neq e$, then $v \neq \vopt$. Thus $v$ has a $c$-improving edge
direction and all the improving edge directions are precisely $\Pos$. The
\Def{longest element} of $W$ is the unique $w_0 \in W$ with $w_0\Pos = -\Pos$.
In particular $ww_0\Delta = -w\Delta$ and the following result, whose proof we
give in Appendix~\ref{sec:comp_roots}, then proves Theorem~\ref{thm:NP_roots}.

\begin{theorem}\label{thm:comp_roots}
    Let $\RoSy$ be an irreducible crystallographic root system with positive
    and simple systems $\Pos \supseteq \Simp$ and let $W$ be the
    corresponding Weyl group. If $\alpha, \beta \in \Pos$ are incomparable,
    then there is $w \in W$ with $w\Simp \cap \Pos = \{\alpha,\beta\}$.
\end{theorem}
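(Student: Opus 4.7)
My plan is to construct the element $w \in W$ by identifying an explicit Weyl chamber $D = wC^\vee$ whose simple system $w\Simp$ has $\alpha^\perp$ and $\beta^\perp$ as two of its walls (on the positive side), with every other wall $\gamma^\perp$ corresponding to a root $\gamma \in \Pos$ with $D$ on the negative side. This would give $w\Simp = \{\alpha, \beta, -\gamma_1, \dots, -\gamma_{n-2}\}$ with $\gamma_i \in \Pos$, so $w\Simp \cap \Pos = \{\alpha, \beta\}$ as desired.

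First I would establish the preliminary inequality $\alpha^t \beta \le 0$. Were $\alpha^t\beta > 0$, the standard root-string argument for crystallographic root systems would yield $\alpha - \beta \in \RoSy$; hence either $\alpha - \beta \in \Pos$ or $\beta - \alpha \in \Pos$, contradicting the incomparability of $\alpha, \beta$ in the root poset. This, combined with linear independence, forces $\{\alpha, \beta\}$ to be a simple system of the rank-$2$ crystallographic subsystem $\Psi := \RoSy \cap \span(\alpha, \beta)$, with corresponding positive system $\Psi \cap \Pos = \cone(\alpha, \beta) \cap \Psi \setminus \{0\}$.

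Second, I would identify $D$ geometrically. Locally near the codimension-$2$ subspace $H := \alpha^\perp \cap \beta^\perp$, the Weyl arrangement factors as a product of the rank-$2$ $\Psi$-arrangement in $H^\perp$ and the restricted arrangement $\{\gamma^\perp \cap H : \gamma \in \RoSy \setminus \Psi\}$ in $H$. Chambers of the full Weyl arrangement that have both $\alpha^\perp$ and $\beta^\perp$ as walls on the positive side are in bijection with chambers of the restricted arrangement in $H$. Among these, I would select the chamber containing a generic perturbation of $-\rho_H$, where $\rho_H$ is the orthogonal projection onto $H$ of a deep vector $\rho \in \int(C^\vee)$. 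The intuition is that $-\rho_H$ sits on the ``negative side'' of every positive-root hyperplane that it crosses, so the resulting global chamber $D$ should have every non-$\Psi$ wall corresponding to a positive root of $\Pos$ with $D$ on its negative side, producing a simple system of the claimed form.

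The main obstacle is this final verification: the restricted arrangement in $H$ is not itself a Coxeter arrangement, so the ``$-\rho_H$-chamber'' does not automatically satisfy the sign condition for every wall $\gamma^\perp$ with $\gamma \in \Pos$. One must show, using the structural interplay between $\RoSy$, $\Psi$, and the incomparability of $\alpha, \beta$, that the chosen chamber's walls all inherit the correct orientation, paying particular attention to roots $\gamma \in \Pos \setminus \Psi$ whose $H$-projections are small or near-aligned with those of $\alpha, \beta$. Should a clean uniform argument prove elusive, the statement reduces to a finite verification by the classification of irreducible crystallographic root systems into types $A_n$, $B_n/C_n$, $D_n$, $E_{6,7,8}$, $F_4$, $G_2$, combined with the classification of possible rank-$2$ subsystems $\Psi$ as $A_1 \times A_1$, $A_2$, $B_2$, or $G_2$; the $A_1 \times A_1$ case (where $\alpha \perp \beta$) serves as a base and the remaining cases can be checked directly.
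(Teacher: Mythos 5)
Your proposal outlines a uniform geometric argument, whereas the paper gives an explicitly non-uniform, case-by-case proof (computer check for $G_2,F_4,E_{6,7,8}$, and hand-crafted (signed) permutations for each incomparability type in $A_{n-1}$, $B_n/C_n$, $D_n$). The first step ($\alpha^t\beta\le 0$ via the root-string lemma) is correct. The second step, however, is flawed as stated: it is \emph{not} true that $\alpha^t\beta\le 0$ together with linear independence forces $\{\alpha,\beta\}$ to be a simple system of $\Psi=\RoSy\cap\span(\alpha,\beta)$. In $B_2$ with long simple root $\delta_1$ and short simple root $\delta_2$, the pair $\{\delta_1,\delta_1+2\delta_2\}$ is linearly independent with inner product $0$, yet is not a simple system, since $\delta_2$ is not an integral combination of them. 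What actually forces $\{\alpha,\beta\}$ to be the simple system of $\Psi\cap\Pos$ is the incomparability in the $\RoSy$-root-poset itself: since $\cone(\Delta_\Psi)\subseteq C$, incomparability in $\RoSy$ implies incomparability in $\Psi$, and in a rank-two root system the only incomparable pair of positive roots is the simple pair. This is a fixable but real gap.

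The decisive gap is in the third step, and you concede it yourself. Constructing a chamber $D$ incident to $H=\alpha^\perp\cap\beta^\perp$ with $\alpha^\perp,\beta^\perp$ as positively-oriented walls is easy; the content of the theorem is that one can choose $D$ so that \emph{every other} wall normal lies in $\Pos^-$. Your anchor $-\rho_H$ does not manifestly achieve this: the sign of $\gamma^t\rho_H=\gamma_H^t\rho$ for $\gamma\in\Pos\setminus\Psi$ is governed by the $H$-component of $\gamma$ and can in principle be negative even though $\gamma^t\rho>0$, so the chamber near $-\rho_H$ is not automatically ``all negative'' in the relevant directions. Without this verification, the construction of $w$ is incomplete. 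The proposed fallback does not repair this: reducing to ``the classification of rank-$2$ subsystems $\Psi$ into $A_1\times A_1$, $A_2$, $B_2$, $G_2$'' ignores that the conclusion depends on how $\Psi$ is embedded in $\RoSy$, i.e., on the walls $\gamma^\perp$ with $\gamma\notin\Psi$ bounding the chamber. That is precisely why the paper works through each infinite family and each incomparability case separately (Propositions~\ref{prop:A-incomp}, \ref{prop:BC-incomp}, \ref{prop:D-incomp} classify the pairs, and then an explicit element of $W$ is exhibited per case). As written, the proposal identifies a promising geometric picture but leaves the theorem's essential verification open.
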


\section{Greatest-improvement and graphical neighbotopes}\label{sec:genie} 

Theorem~\ref{thm:NP_sweep} insinuates that branchings for the
greatest-improvement rule can be obtained in a \emph{greedy-like} fashion.
Indeed the corresponding arborescence is determined once the edge directions
$\ED(P)$ are sorted according to the cost vector $c$. The corresponding
neighbotopes can be viewed as solving a certain optimization problem for a
fixed polytope $P$ and varying objective function $c$. In this section, we
make the connection to greedy-like structures more precise.

Let $G = (V,E)$ be an abstract graph that throughout this section we will
assume to be simple and undirected. Let $c  \in \R^V$ be
\Def{node potentials}. For an ordered pair of adjacent nodes $(u,v)$ we
call $c_v - c_u$ the \Def{potential difference}. A \Def{branching} on $G$ is a
map $\br : V \to V$ such that $\br(v) \in \Nb{G}{v} \cup \{v\}$ and for every $v
\in V$ there is a $k \ge 0$ with $\br^k(v) = \br^{k+1}(v)$. The set $V_\br =
\{ v : \br(v) = v\}$ is the set of \Def{sinks} of the branching. The
\Def{potential energy} of a branching is
\begin{equation}\label{eqn:energy}
    c(\br) \ := \  \sum_{v \in V} c_{\br(v)} - c_v \, 
\end{equation}
and the \textsc{Max Potential Energy Branching} is the problem of finding a
branching of maximal potential energy. A scenario that we can imagine is that
$V$ is a collection of sites in a mountainous region where $c_v$ gives the
elevation. The potential difference $c_u-c_v$ is related to the energy
(coming from, say, water turbines) that can be generated by setting up a flow
from $v$ to $u$ and the edges $E$ encode the admissible connections. 
The optimization problem is now to find the energy-optimal routings from every
node to a sink. This is a particular instance of the 
Maximum Weight Branching Problem; see~\cite[Chapter~6.2]{KorteVygen}.

\newcommand\rdeg{\bar{\delta}}%
A polyhedral reformulation is apparent. Continuing Example~\ref{ex:simplex1},
let $\rdeg(\br) \in \R^V$ be the \Def{reduced in-degree sequence} of $\br$
with $\rdeg(\br)_v := |\br^{-1}(v)| - 1$. Rewriting~\eqref{eqn:energy} to
\[
    c(\br) \ = \  \sum_{v \in V} |\br^{-1}(v)| c_v - \sum_{v \in V} c_v
    \ = \  \sum_{v \in V} \rdeg(\br)_v c_v
\]
shows that we are optimizing the linear function $c$ over the
\Def{graphical neighbotope} 
\[
    \NP{G} \ = \ \conv \{ \rdeg(\br) : \br \text{ branching of } G \} \, .
\]
We call a branching $\br$ a \Def{greedy branching} if 
\[
    \br(v) \ = \ \argmax \{ c_u - c_v : u \in \Nb{G}{v} \cup \{v\} \} \, .
\]
Note that not all branchings are greedy. Indeed for vertices $v,v'$ with
$\Nb{G}{v} = \Nb{G}{v'}$ the greedy condition would force $\br(v) = \br(v')$.

\begin{theorem}\label{thm:NP_greedy}
    The vertices of $\NP{G}$ are in bijection to greedy branchings of $G$.
\end{theorem}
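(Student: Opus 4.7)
The plan is to realize $\NP{G}$ as a Minkowski sum of simplices, one per vertex of $G$, and then invoke the standard characterization of vertices of Minkowski sums to identify them with greedy branchings. This mirrors the strategy used in the proofs of Theorems~\ref{thm:PP} and \ref{thm:NP}.

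\textbf{Step 1: Minkowski decomposition.} For each $v \in V$ I would introduce the simplex $\NP{G}(v) := \conv\{e_u - e_v : u \in \Nb{G}{v} \cup \{v\}\} \subset \R^V$. The algebraic identity $\rdeg(\br) = \sum_{v \in V}(e_{\br(v)} - e_v)$, valid for any branching $\br$, immediately gives the inclusion $\NP{G} \subseteq \sum_v \NP{G}(v)$. The goal is to promote this to an equality.

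\textbf{Step 2: Greedy choices produce valid branchings.} Every vertex of $\sum_v \NP{G}(v)$ is uniquely maximized by some generic linear functional $c \in \R^V$ and decomposes uniquely as $\sum_v (e_{u_v} - e_v)$, where $u_v$ is the unique maximizer of $c_u - c_v$ (equivalently, of $c_u$) over $\Nb{G}{v} \cup \{v\}$. The key point to verify is that the map $v \mapsto u_v$ so defined is an honest branching, i.e.\ iterating it terminates at a fixed point. Since $v$ is itself always a candidate for $u_v$, one has $c_{u_v} \geq c_v$, and injectivity of the generic $c$ forces strict inequality whenever $u_v \neq v$. Consequently, potentials strictly increase along every nontrivial arc, cycles are ruled out, and iteration must stabilize at a sink. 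This exhibits every vertex of $\sum_v \NP{G}(v)$ as $\rdeg$ of a greedy branching, yielding the reverse inclusion $\sum_v \NP{G}(v) \subseteq \NP{G}$.

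\textbf{Step 3: Completing the bijection.} Combining the two inclusions gives $\NP{G} = \sum_v \NP{G}(v)$, and the vertices of a Minkowski sum are in bijection with the coherent tuples of vertex choices of the summands. Translating this through the decomposition, the vertices of $\NP{G}$ are precisely the points $\rdeg(\br)$ for branchings $\br$ arising as greedy selections with respect to some generic $c$, i.e.\ greedy branchings. Uniqueness of the Minkowski decomposition implies that distinct greedy branchings give rise to distinct vertices, so $\br \mapsto \rdeg(\br)$ is the claimed bijection.

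\textbf{Main obstacle.} The conceptual heart of the argument lies in Step 2: showing that greedy per-vertex choices automatically glue into a cycle-free branching. Without allowing $v$ itself as a candidate for $u_v$, the potentials could fail to be monotone along arcs and cycles could form, breaking the equality $\NP{G} = \sum_v \NP{G}(v)$. The definition of branching (which permits fixed points) is precisely what rescues the greedy algorithm and makes the Minkowski-sum identification clean.
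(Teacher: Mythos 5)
Your proof is correct and takes essentially the same route as the paper: realize $\NP{G}$ as the Minkowski sum $\sum_{v} \NP{G}(v)$ of the simplices $\NP{G}(v) = \conv\{e_u - e_v : u \in \Nb{G}{v} \cup \{v\}\}$ and read off the vertices. The paper's proof is a single sentence asserting this decomposition; you usefully spell out the detail it leaves implicit, namely that a generic $c$-greedy selection at each vertex really does assemble into a cycle-free branching because the potential strictly increases along every nontrivial arc (precisely because $v$ itself is allowed as a candidate).
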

\begin{proof}
    As before, we note that the graphical neighbotope can be written as a
    Minkowski sum $\NP{G} = \sum_{v} \NP{G}(v)$ for
    \[
        \NP{G}(v) \ := \ \conv( e_u - e_v : u \in \Nb{G}{v} \cup \{v\} ) \, ,
    \]
    which then shows that the vertices are in one-to-one correspondence with
    greedy branchings.
\end{proof}

The greatest-improvement neighbotopes can be 
viewed as graphical neighbotopes with certain restrictions on node potentials.

\begin{prop} \label{prop:extform} 
    Let $P \subset \R^d$ a polytope with graph $G = (V,E)$. Then the
    greatest-improvement neighbotope $\NP[\GI]{P}$ is the image of 
    $\NP{G}$ under the linear projection $\pi : \R^V \to \R^d$ given by
    $\pi(e_v) := v$.
\end{prop}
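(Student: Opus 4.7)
The approach is to exploit the Minkowski sum decompositions that have already been established for both polytopes and observe that they are compatible with the projection $\pi$.

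First, I would recall that the proof of Theorem~\ref{thm:NP_greedy} shows
\[
\NP{G} \ = \ \sum_{v \in V} \NP{G}(v) \, , \quad \text{where} \quad \NP{G}(v) = \conv(e_u - e_v : u \in \Nb{G}{v} \cup \{v\}),
\]
and similarly the proof of Theorem~\ref{thm:NP} (specialized to $\NO \equiv 1$) gives
\[
\NP[\GI]{P} \ = \ \sum_{v \in V(P)} \NP[\GI]{P}(v) \, , \quad \text{where} \quad \NP[\GI]{P}(v) = \conv(u - v : u \in \Nb{P}{v} \cup \{v\}).
\]

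Next, since linear maps send Minkowski sums to Minkowski sums and convex hulls to convex hulls, I compute
\[
\pi(\NP{G}(v)) \ = \ \conv(\pi(e_u) - \pi(e_v) : u \in \Nb{G}{v} \cup \{v\}) \ = \ \conv(u - v : u \in \Nb{P}{v} \cup \{v\}) \ = \ \NP[\GI]{P}(v),
\]
using that $G = (V(P), E(P))$ by definition and $\pi(e_v) = v$. Combining this with the Minkowski decompositions yields
\[
\pi(\NP{G}) \ = \ \pi\Bigl(\sum_v \NP{G}(v)\Bigr) \ = \ \sum_v \pi(\NP{G}(v)) \ = \ \sum_v \NP[\GI]{P}(v) \ = \ \NP[\GI]{P}.
\]

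There is really no hard step here: the whole argument rests on the fact that both polytopes admit compatible vertex-wise Minkowski decompositions and that $\pi$ intertwines them. It is worth remarking (though not strictly necessary for the proof) that at the level of vertices $\pi$ sends reduced in-degree sequences of branchings to the points $\PPGKZ^{\GI}(\br)$, since
\[
\pi(\rdeg(\br)) \ = \ \sum_v (|\br^{-1}(v)| - 1) v \ = \ \sum_v \br(v) - \sum_v v \ = \ \sum_v (\br(v) - v),
\]
which matches $\PPGKZ^{\GI}(\br)$ from~\eqref{eqn:PPGKZ}. This confirms that the projection is the right one and clarifies why general branchings on the abstract graph correspond on the nose to the quantity used to define the neighbotope, even though arborescences of $(P,c)$ have a unique sink while branchings of $G$ need not.
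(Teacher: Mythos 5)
Your proof is correct and uses the natural Minkowski-decomposition argument that the paper itself establishes for both $\NP{G}$ (in the proof of Theorem~\ref{thm:NP_greedy}) and $\NP[\NO]{P}$ (in the proof of Theorem~\ref{thm:NP}); the paper in fact states Proposition~\ref{prop:extform} without an explicit proof, evidently regarding it as an immediate consequence of exactly these two decompositions and the linearity of $\pi$. Your remark that $\pi(\rdeg(\br)) = \PPGKZ^{\GI}(\br)$ is a nice sanity check of the vertex-level correspondence, and your observation that $\pi$ intertwines the summand-wise decompositions is precisely the content needed.
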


So, $\NP{G(P)}$ is an \emph{extended formulation} of
$\NP[\GI]{P}$ from which an inequality description as well as the facial
structure can be recovered.

Note that greedy branchings need not be arborescences, i.e., there is not
necessarily a unique sink. For suitable node potentials, we obtain
arborescences. In particular, Proposition \ref{prop:extform} shows that any node
weighting of a polytope graph coming from applying a linear objective function
to each vertex will always yield an arborescence.

The structure underlying greedy branchings is that of a polymatroid (for details see \cite{schrijver-3book}). Recall a set
function $f_G : 2^V \to \Z_{\ge0}$ is called a \Def{polymatroid} if 
\begin{enumerate}[\rm i)]
    \item $f(\emptyset) = 0$,
    \item $f$ is non-decreasing: $A \subseteq B$ implies $f(A) \le f(B)$, and
    \item $f$ is submodular: $f(A\cup B) + f(A \cap B) \le f(A) + f(B)$,
\end{enumerate}
for all $A,B \subseteq V$. The associated \Def{polymatroid polytope} is given by 
\[
    P_f \ = \ \{ x \in \R^V_{\ge 0} : x(A) \le f(A) \text{ for
    all } A \subseteq V \} \, .
\]
where $ x(A) := \sum_{v \in A} x_v$.  The polymatroid \Def{base polytope} is
$B_f := P_f \cap \{ x : x(V) = f(V) \}$.  Polymatroids and polymatroid (base)
polytopes generalize matroids and independence polytopes. They were introduced
by Edmonds~\cite{Edmonds}, who also showed that linear functions on $P_f$ can
be maximized by a greedy-type algorithm.

\begin{prop}
    The polytope $\NP{G}$ is the polymatroid base polytope for the polymatroid
    \[
        f(S) \ = \ |S| + |\Nb{G}{S}| \, ,
    \]
    where $\Nb{G}{S} = \{ u \in V\setminus S : uv \in E \text{ for some } v
    \in S\}$. In particular, $\NP{G} = B_f$.
\end{prop}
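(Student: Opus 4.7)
The plan is to match vertices of $B_f$ with greedy branchings via Edmonds' greedy algorithm, after a translation that aligns the two conventions. Let $y(\br)_v := |\br^{-1}(v)|$, so $y(\br) = \rdeg(\br) + \mathbf{1}$ and $\NP{G} + \mathbf{1} = \conv\{y(\br) : \br \text{ branching of } G\}$. The statement $\NP{G} = B_f$ is to be read up to this translation by $\mathbf{1}$.

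First I would verify that $f(S) = |S| + |\Nb{G}{S}| = |N[S]|$, the cardinality of the closed neighborhood, is a polymatroid. The equality $f(\emptyset)=0$ and monotonicity are immediate. For submodularity, I would use $N[A\cap B] \subseteq N[A] \cap N[B]$ together with
\[
|N[A]| + |N[B]| \ = \ |N[A] \cup N[B]| + |N[A] \cap N[B]| \ \ge \ |N[A\cup B]| + |N[A\cap B]|,
\]
where the last inequality uses $N[A\cup B] = N[A] \cup N[B]$.

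Next, for any branching $\br$ I would show $y(\br) \in B_f$. Clearly $y(\br)\ge 0$ and $\sum_v y(\br)_v = |V| = f(V)$ since $\Nb{G}{V}=\emptyset$. For $S\subseteq V$, the defining property $\br(v) \in \{v\}\cup \Nb{G}{v}$ forces $\br^{-1}(S) \subseteq S \cup \Nb{G}{S} = N[S]$, giving $y(\br)(S) = |\br^{-1}(S)| \le f(S)$. Thus $\NP{G}+\mathbf{1} \subseteq B_f$.

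The reverse inclusion comes from Edmonds' characterization of vertices of polymatroid base polytopes. For a generic $c\in\R^V$ with $c_{v_1}>c_{v_2}>\cdots>c_{v_n}$ and $S_i := \{v_1,\dots,v_i\}$, the unique vertex of $B_f$ maximizing $c$ has coordinates $y^*_{v_i} = f(S_i)-f(S_{i-1}) = |N[v_i]\setminus N[S_{i-1}]|$. On the other hand, the greedy branching $\br_c$ induced by $c$ satisfies $\br_c(v)=v_i$ iff $v_i \in N[v]$ and $v_j\notin N[v]$ for all $j<i$, that is,
\[
\br_c^{-1}(v_i) \ = \ N[v_i]\setminus \bigcup_{j<i} N[v_j] \ = \ N[v_i]\setminus N[S_{i-1}].
\]
Hence $y^* = y(\br_c)$, so every vertex of $B_f$ is a vertex of $\NP{G}+\mathbf{1}$, and combining with Theorem~\ref{thm:NP_greedy} yields $B_f = \NP{G}+\mathbf{1}$, which is the claim.

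The main technical step, and the only step that is not formal bookkeeping, is the computation identifying the greedy marginal $f(S_i) - f(S_{i-1})$ with the in-degree $|\br_c^{-1}(v_i)|$; once this is in hand the equality follows from the classical greedy-optimality of polymatroid base polytopes.
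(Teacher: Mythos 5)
Your proof is correct and takes a genuinely different route from the paper. The paper's proof is a one-liner that invokes Postnikov's theory of generalized permutahedra: since $\NP{G}$ is a Minkowski sum of coordinate simplices $\NP{G}(v)$, it is automatically a generalized permutahedron, and its defining submodular function can be read off as the support function $f(S) = \max\{\sum_{u\in S}x_u : x \in \NP{G}\}$, which evaluates to $|N[S]|$. Your argument is instead self-contained within Edmonds' polymatroid framework: you verify submodularity of $|N[S]|$ directly, show the containment $\NP{G}+\mathbf{1}\subseteq B_f$ by checking the inequalities $y(\br)(S)\le f(S)$, and then establish the reverse inclusion by matching the greedy vertex $y^*_{v_i}=f(S_i)-f(S_{i-1})=|N[v_i]\setminus N[S_{i-1}]|$ with the greedy branching's in-degree $|\br_c^{-1}(v_i)|$. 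That matching is the genuinely nontrivial observation and you carry it out cleanly.

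One thing your proof does better than the paper: you explicitly note that the identification $\NP{G}=B_f$ only holds after translating by $\mathbf{1}$, since the paper's $\NP{G}$ is defined via the \emph{reduced} in-degree sequence $\rdeg(\br)=|\br^{-1}(\cdot)|-1$ and thus has negative coordinates, whereas $B_f\subseteq\R^V_{\ge 0}$ by definition. The paper's support-function computation silently uses the summands $\conv\{e_u : u \in \Nb{G}{v}\cup\{v\}\}$ rather than $\conv\{e_u - e_v : u \in \Nb{G}{v}\cup\{v\}\}$, so its derivation of $f(S)=|S|+|\Nb{G}{S}|$ is really a statement about $\NP{G}+\mathbf{1}$. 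Your version makes this normalization explicit, which is the more careful reading. The trade-off is length: the paper's citation-based argument is shorter but requires the reader to already trust the generalized-permutahedron machinery, while yours is longer but elementary.
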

\begin{proof}
    It follows from the description as a Minkowski sum that $\NP{G}$ is a
    generalized permutahedron in the sense of~\cite{PostPerm}. Thus the
    submodular function is given by $f(S) = \max\{ \sum_{u \in S } x_u :
    x \in \NP{G}\}$. It follows that $f(S)$ is the number of vertices $v \in
    V$ such that $S \cap (\Nb{G}{v} \cup \{v\}) \neq \emptyset$ and this is
    precisely $|S| + |\Nb{G}{S}|$.
\end{proof}

The greedy algorithm for polymatroids~\cite{Edmonds,schrijver-3book} gives us a simple
combinatorial algorithm to construct greedy branchings for given $G = (V,E)$
and generic $c \in \R^V$:
\begin{compactenum}
    \item Let $M \leftarrow \emptyset$ be the collection of marked vertices.
    \item Let $D \leftarrow \emptyset$ be the collection of already directed
        vertices.
    \item If $V = M$, STOP.  Otherwise, choose $u \in V \setminus M$ with $c_u$ maximal and $M \leftarrow M \cup \{u\}$.
    \item if $u \not\in D$, then $\br(u) := u$ and $D \leftarrow D \cup
        \{u\}$.
    \item for every $v \in \Nb{G}{u} \setminus (M \cup D)$ set $\br(v) := u$
        and $D \leftarrow D \cup \{v\}$.
    \item Repeat (3).
\end{compactenum}

The algorithm also shows that if $\br$ is a greedy branching, then there is a
vertex $u$ with $\br(v) = u$ for all $v \in \Nb{G}{u}$. This is the key to
recovering a greedy branching from its reduced indegree sequence.
\begin{compactenum}
    \item Let $M \leftarrow \emptyset$ be the collection of marked vertices.
    \item Let $D \leftarrow \emptyset$ be the collection of already directed
        vertices.
    \item If $V = M$, STOP.  Otherwise, choose $u \in V \setminus M$ with $\rdeg_u = | \Nb{G}{u} \setminus (M\cup D)|$ and
        mark $u$ ($M \leftarrow M \cup \{u\}$). 
    \item If no $u$ exists in step (3), choose any $u \in V \setminus D$, and
        $M \leftarrow M \cup \{u\}$.
    \item If no $u$ exists in steps (3) and (4), then $D = V$, and we are done. 
    \item Otherwise, for $v \in \Nb{G}{u} \setminus (M \cup D)$, direct $v$ to $u$ $D \leftarrow D \cup \{v\}$. If $u \notin D,$ direct $u$ to itself $(D \leftarrow D \cup \{u\})$. Return to Step (3). 
\end{compactenum}
Via the greedy algorithm, the $u$ with highest weight will have all of its
neighbors $\Nb{G}{u}$ directed towards it. Furthermore, the vertex of next
highest weight will have all its neighbors towards except those that have
already been directed. Hence, so long as there exist vertices that have not
been directed towards another vertex by the algorithm, there will always exist
some vertex satisfying the conditions of step (3). After no vertex satisfying
step (3) exists, all remaining vertices that are not directed must be directed
to themselves. That case is accounted for by step (4), which iterates until $D
= V$. 

The graphical neighbotopes are instances of the hypergraphic polytopes of
Benedetti et al~\cite{Benedetti}; see also~\cite{Agnarsson}. A hypergraph is a
collection of hyperedges $\mathcal{H} \subseteq 2^V$ for some finite set $V$.
The associated \Def{hypergraph polytope} is 
\[
    P_{\mathcal{H}} \ = \ \sum_{H \in \mathcal{H}} \conv\{ e_v : v \in H \}
    \, .
\]
For $G$, we can associate the hypergraph $\mathcal{H}_G = \{ \Nb{G}{v} : v \in
V \}$. It now follows from the proof of Theorem~\ref{thm:NP_greedy} that
$\NP{G} = P_{\mathcal{H}_G}$. The vertices and faces of $P_{\mathcal{H}}$ were
interpreted in terms of acyclic orientations of $\mathcal{H}$. They can be
translated directly to our greedy branchings.

\begin{example}[Complete bipartite graphs]\label{ex:NP_bipartite}
    Consider the complete bipartite graph $K_{m,n}$ with color classes $A =
    [m]$ and $B = [n]$. Let $c$ be a node potential on $K_{m,n}$ with
    corresponding greedy branching $\br$. Let $a\in A$ and $b\in B$ be the
    nodes that attain the maximal potential on $A$ and $B$, respectively. Let
    us assume that $c_a > c_b$. Then $\br(v) = a$ for all $v \in B$, $\br(u) =
    u$ for all $u \in A$ with $c_u > c_b$ and $\br(u) = b$ otherwise. Hence
    the branching is completely determined by the nodes $a,b$ and the set $S =
    \{ u \in A : c_u > c_b \}$. In particular, every such triple $(a,b,S)$ can
    occur. Exchanging the roles of $A$ and $B$ then yields the total number of
    greedy branchings as
    \[
        m \sum_{k=1}^{n} k\binom{n}{k} + n \sum_{k=1}^{m}
        k\binom{m}{k} \ = \ mn(2^{m-1} +
        2^{n-1}) \, .
    \] 
\end{example}

\begin{example}[Path graphs]\label{ex:path}
    For $n \ge 1$, let $P_n$ be the path on nodes $V = \{1,\dots,n\}$ and
    edges $E = \{\{i,i+1\} : 1 \le i < n \}$. 

    We may encode a branching $\br$ of $P_n$ uniquely as a word $W =
    W_1W_2\dots W_n$ of length $n$ over the alphabet $\{L,R,S\}$ where we set
    $W_i = L$ if $\br(i) = i-1$, $w_i = R$ if $\br(i) = i+1$ and $w_i = S$ if
    $\br(i) = i$. Note that the only forbidden subword is $RL$. This allows us
    to count all branchings. The number of branchings of $P_n$ is the number
    of walks in the following directed graph $D_1$ that start at node $1$ and
    end at node $2$ after $n$ steps: 
    \begin{center}
        \includegraphics[height=1.5cm]{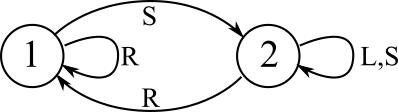}
    \end{center}
    Using the transfer matrix method~\cite[Ch.~4.7]{EC1}, one finds that the
    number of branchings is the Fibonacci number $F(2n)$.

    For the greedy branchings, one further observes that the only additional
    forbidden subword is $SS$. Hence, the number of greedy branchings of $P_n$
    is the number of walks in the following directed graph that start at node
    $1$ and end at node $2$ after $n$ steps: 
    \begin{center}
        \includegraphics[height=1.5cm]{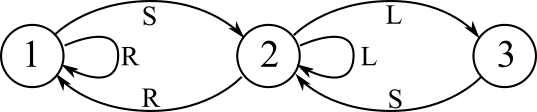}
    \end{center}
    The number of greedy branchings is given by the sequence $a(n)$ with $a(0)
    = 0$, $a(1)=1$, $a(2)=2$ and $a(n+3) = 2a(n+2) + a(n+1) - a(n)$ for $n\ge
    0$; see also~\cite{A006054}.
\end{example}

\begin{remark}
    There is an obvious graphical generalization of general neighbotopes. For
    a given graph $G = (V,E)$ let $\NO : E \to \R_{>0}$ be a normalization. We
    can define the normalized graphical neighbotope $\NP[\NO]{G}$ as the
    Minkowski sum of 
    \[
        \NP{G}(v) \ := \ \conv( \tfrac{e_u - e_v}{\NO(uv)} : u \in \Nb{G}{v} \cup \{v\} ) \, ,
    \]
    for $v \in V$. Branchings can still be found with a greedy-type algorithm
    that for every node $v$ makes the optimal choice.
\end{remark}

\appendix
\section{Proof of Theorem~\ref{thm:comp_roots}}\label{sec:comp_roots}

In this section we give a non-uniform (that is, case-by-case) proof of
Theorem~\ref{thm:comp_roots}. Let $\RoSy \subset \R^n$ an irreducible
crystallographic root system with simple and positive systems $\Simp
\subseteq \Pos = \RoSy \cap \{x : c^t x > 0 \}$.  The simple system consists
of linearly independent vectors and $r = |\Simp|$ is the rank of the root
system.  Crystallographic root systems are completely classified: There are
four infinite families $A_{n-1}, B_n, C_n, D_n$ for $n \ge 2$ as well as
sporadic instances $G_2, F_4, E_6, E_7, E_8$, where the subscript gives the
rank; see~\cite[Ch.~2.8--2.10]{Humphreys}.

For $G_2, F_4, E_6, E_7,$ and $E_8$, the number of incomparable pairs is $2,
55, 204, 546,$ and $1540$, respectively. The claim can be checked by a
computer: Let $Z_\RoSy$ be the zonotope associated to the root system.  If $v
\in V(Z_\RoSy)$ is a vertex with improving edge directions $\Simp_v$, then
$\Nb{Z_\RoSy,c}{v} = \{ s_\alpha(v) : \alpha \in \Simp_v\}$ and $u =
s_\alpha(v)$ has improving edge directions $\Simp_u = s_\alpha(\Simp')$.  This
yields a naive, yet quite fast depth-first search algorithm with starting
point $v = -\vopt$ and $\Simp_v= \Simp$. Explicit coordinates for the simple
roots are given in~\cite[Section~2.10]{Humphreys}.

In the following we check the four infinite families $A_{n-1}, B_n, C_n,$ and
$D_n$.

\subsection*{Type $A_{n-1}$}
A realization of the root system of type $A_{n-1}$ is given by $\RoSy = \{
    e_i - e_j : i,j \in [n], i \neq j\}$. For $c \in \R^n$ with $c_1 > c_2 >
\cdots > c_n$, the positive system and simple system are
\[
    \Pos \ = \  \{ e_i - e_j : 1 \le i < j \le n \} \qquad \text{ and } \qquad
    \Simp \ = \  \{ e_1 - e_2, e_2 - e_3, \dots, e_{n-1}-e_n \} \, .
\]
For $h=1,\dots,n$, let $s_h(x) = x_1 + \cdots + x_h$. The cone $C =
\cone(\Simp)$ is then given by
\[
    C \ = \ \{ x \in \R^n : s_1(x) \ge 0, s_2(x) \ge 0, \dots, s_{n-1}(x) \ge
    0 \} \, .
\]
For $i,j \in [n]$, we write $[i,j] := \{ k \in [n] : i \le k \le j \}$.

\begin{prop}[Type $A_{n-1}$ incomparable pairs]\label{prop:A-incomp}
    Let $i < j$ and $k < l$. Then
    \begin{enumerate}[\rm({A}1)]
        \item $e_i - e_j$ and $e_k - e_l$ are incomparable if and only if
            $[i,j] \not\subseteq [k,l]$ and $[k,l] \not\subseteq [i,j]$.
    \end{enumerate}
\end{prop}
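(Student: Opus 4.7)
The plan is to unwind the definitions: express positive roots in the simple root basis and translate the partial order $\preceq$ directly into an interval containment condition.

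First I would record that every positive root of type $A_{n-1}$ has a very simple expansion in simple roots: writing $\alpha_h := e_h - e_{h+1}$, the telescoping identity gives
\[
    e_i - e_j \ = \ \sum_{h=i}^{j-1} \alpha_h
\]
for $i < j$. Since $\Simp = \{\alpha_1,\dots,\alpha_{n-1}\}$ is a basis of $\span(\Pos)$, any vector in $\span(\Simp)$ has a unique expansion in simple roots, and it lies in $C = \cone(\Simp)$ if and only if all coefficients of that expansion are nonnegative.

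Next, for $\alpha = e_i - e_j$ and $\beta = e_k - e_l$, I would subtract the two expansions to obtain
\[
    \beta - \alpha \ = \ \sum_{h=1}^{n-1} \bigl(\mathbf{1}[h \in [k,l-1]] - \mathbf{1}[h \in [i,j-1]]\bigr)\, \alpha_h.
\]
Thus $\beta - \alpha \in C$ iff $\mathbf{1}[h \in [i,j-1]] \le \mathbf{1}[h \in [k,l-1]]$ for every $h$, i.e.\ iff $[i,j-1] \subseteq [k,l-1]$. Since $i,j,k,l$ are integers with $i<j$ and $k<l$, this is equivalent to $[i,j] \subseteq [k,l]$. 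Symmetrically, $\alpha - \beta \in C$ iff $[k,l] \subseteq [i,j]$.

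Combining these, $\alpha$ and $\beta$ are incomparable in the root poset precisely when neither containment holds, proving (A1). The argument is essentially a careful bookkeeping of simple-root coefficients; the only subtlety is the off-by-one shift between the ``simple-root interval'' $[i,j-1]$ and the ``index interval'' $[i,j]$ appearing in the statement, and there is no real obstacle beyond this.
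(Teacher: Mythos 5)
Your proof is correct and takes essentially the same approach as the paper: the paper checks membership in $C$ via the partial sums $s_h(x)=x_1+\cdots+x_h$, which are precisely the simple-root coefficients you compute directly from the telescoping expansion $e_i-e_j=\sum_{h=i}^{j-1}\alpha_h$. Your version is a bit cleaner in avoiding the WLOG case split $i\le k$ that the paper uses, but the underlying computation is identical.
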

There are precisely $2\binom{n}{4} + \binom{n}{3}$ pairs of incomparable
positive roots. 

\begin{proof}
    We may assume $i \le k$. If $i = k$, then $(e_k-e_l) - (e_i - e_j) = e_j -
    e_l \in C$ or $e_l - e_j \in C$. Hence $i < k$ and the first nonzero
    coordinate of $(e_k-e_l) - (e_i - e_j)$ is negative. This shows $(e_k-e_l)
    - (e_i - e_j) \not\in C$. Now, $s_h( (e_i - e_j) - (e_k - e_l)) < 0$ for
    some $h=1,\dots,n-1$ and hence $(e_i - e_j) - (e_k - e_l) \not \in C$ if
    and only if $i < j \le k < l$ or $i < k < j < l$.
\end{proof}

The reflection group $W$ associated to the $A_{n-1}$ root system acts on
$\R^n$ by permuting coordinates. For a permutation $\tau$ of $[n]$ we set
$\tau(e_i) := e_{\tau(i)}$ for $i=1,\dots,n$. In particular, $\tau\Simp = \{
    e_{\tau(1)} - e_{\tau(2)}, \dots, e_{\tau(n-1)} - e_{\tau(n)} \}$ is a
simple system and every simple system arises this way. Note that $e_{\tau(i)}
- e_{\tau(i+1)}$ is a positive root if and only if $\tau(i) < \tau(i+1)$, that
is, $i$ is an \Def{ascent} of $\tau$.

\begin{proof}[Proof of Theorem~\ref{thm:comp_roots} for $A_{n-1}$] 
    Let $\alpha = e_i - e_j$ and $\beta = e_k - e_l$ with $i < j$ and $k < l$
    be incomparable positive roots. It suffices to give a permutation $\tau$
    such that $\tau \Simp \cap \Pos = \{\alpha, \beta\}$: 
    For $i < j < k < l$
    \[
        n\, n-1\, \dots\, l+1\, l-1\, \dots\, k+1\, \mathbf{k}\,
        \mathbf{l}\, k-1\,\dots\, j+1\, \mathbf{i}\,\mathbf{j}\,j-1\,\dots\,
        i+1\,i-1\,\dots\,1 \, .
    \]
    For $i < k \le j < l$
    \[
        n\, n-1\, \dots\, l+1\, l-1\, \dots\, j+1\, \mathbf{i}\,
        \mathbf{j}\, j-1\, \dots\, k+1\, \mathbf{k}\,
        \mathbf{l}\,k-1\,\dots\,i+1\,i-1\,\dots\,1 \qedhere
    \]
\end{proof}
\subsection*{Type $B_{n}$ and $C_{n}$}
A realization of the root system of type $B_{n}$ is given by the roots $e_i -
e_j$, $e_i + e_j$ for $i,j \in [n], i \neq j$ and $\pm e_1,\dots, \pm e_n$.
For $c \in \R^n$ with $c_1 > c_2 > \cdots > c_n > 0$, the positive system and
simple system are
\[
    \Pos \ = \  \{ e_i - e_j, e_i + e_j : 1 \le i < j \le n \} \cup
    \{e_1,\dots, e_n\} \  \text{ and } \ 
    \Simp \ = \  \{ e_1 - e_2, e_2 - e_3, \dots, e_{n-1}-e_n, e_n \} \, .
\]
The cone $C = \cone(\Simp)$ is given by
\[
    C \ = \ \{ x \in \R^n : s_1(x) \ge 0, s_2(x) \ge 0, \dots, s_{n-1}(x) \ge
    0, s_{n}(x) \ge 0 \} \, .
\]
For $i,j \in [n]$, we write $(i,j) := \{ k \in [n] : i < k < j \}$.

The crystallographic root system of type $C_n$ differs from $B_n$ in that the
roots $\pm e_i$ are replaced by $\pm 2e_i$. With these modifications, the
positive system and simple system are obtained from type $B_n$. The associated
reflection group is unchanged.

\begin{prop}[Type $B_{n}$ and $C_n$ incomparable pairs] \label{prop:BC-incomp}
    Let $i < j$ and $k < l$. For type $B_n$
    \begin{enumerate}[\rm(B1)]
        \item $e_k - e_l, e_i - e_j$ are incomparable if and only if 
            $[i,j] \not\subseteq [k,l]$ and $[k,l] \not\subseteq [i,j]$;
        \item $e_k + e_l, e_i + e_j$ are incomparable if and only if  
            $[i,j] \subseteq (k,l)$ or $[k,l] \subseteq (i,j)$;

        \item $e_k - e_l, e_i + e_j$ are incomparable if and only if $k < i$;
        \item $e_k - e_l, e_i$ are incomparable if and only if $k < i$;

        \item $e_k + e_l, e_i$ are incomparable if and only if $i < k$.
    \end{enumerate}
    For type $C_n$, the cases \textrm{(B4)} and \textrm{(B5)} are replaced by 
    \begin{enumerate}[\rm(C1)]
            \setcounter{enumi}{3}
        \item $e_k - e_l, 2e_i$ are incomparable if and only if $k < i$;
        \item $e_k + e_l, 2e_i$ are incomparable if and only if $k < i < l$.
    \end{enumerate}
\end{prop}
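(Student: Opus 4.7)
The plan is to prove each of (B1)--(B5), (C1), (C2) by direct verification, using the key reformulation that a vector lies in the cone $C = \cone(\Simp)$ precisely when all of its partial sums are nonnegative. Since $\Simp = \{e_1 - e_2, \dots, e_{n-1}-e_n, e_n\}$ in type $B_n$, writing $x = \sum_{i=1}^{n-1} a_i(e_i - e_{i+1}) + a_n e_n$ and unraveling yields $a_h = s_h(x)$ for all $h$. Hence $x \in C$ iff $s_h(x) \ge 0$ for all $h=1,\dots,n$. In type $C_n$ the last simple root is $2e_n$ instead, but the associated inequality is still $s_n(x) \ge 0$, so the test is unchanged.

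Given this, to check whether two positive roots $\alpha, \beta$ are incomparable, it suffices to compute the partial sums $s_h(\alpha - \beta)$ and $s_h(\beta - \alpha)$ and see whether each attains a strictly negative value for some $h$. In each of (B1)--(B5), (C1), (C2), the difference $\alpha - \beta$ is a signed $\pm 1$ (or $\pm 2$) combination of at most four unit vectors $e_i, e_j, e_k, e_l$, so the partial sums form a piecewise-constant integer-valued function that changes only at the positions of these indices. The verification then reduces to a short case split on the relative order of $i, j, k, l$ (and the sign each index carries in $\alpha - \beta$).

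For (B1), the analysis is identical to Proposition~\ref{prop:A-incomp} and yields the interval-incomparability condition $[i,j] \not\subseteq [k,l]$, $[k,l] \not\subseteq [i,j]$. For (B2), ordering the four indices as $p_1 < p_2 < p_3 < p_4$ and tracking which of them contribute a $+1$ versus a $-1$ to $\alpha - \beta$ shows that both $\alpha - \beta$ and $\beta - \alpha$ acquire a strictly negative partial sum exactly when one closed interval is strictly contained in the open interval determined by the other, giving the stated condition. Cases (B3)--(B5) compare a ``difference'' or ``sum'' root with a short root $\pm e_i$ (and (C1)--(C2) do the same with $\pm 2 e_i$ in type $C$), and each reduces to checking whether the unique sign change of a short-root contribution forces a negative partial sum; the threshold in each case is given by the relative position of $i$ with respect to $k$ (and $l$ in (C2)).

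The main obstacle is not any single case but the organizational overhead of the subcase analysis, together with the handling of degenerate configurations where some of the indices coincide (for example, $i = k$ in (B1), $j = l$ in (B2), or $i = l$ in (C2)); these boundary cases are exactly where the partial sums flip from negative to zero and account for the \emph{strict} containment in (B2) and for the strict inequality $i < l$ in (C2). Once these edge cases are pinned down, the entire proposition is a routine partial-sum bookkeeping, uniform across both types $B_n$ and $C_n$.
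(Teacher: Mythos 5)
Your proposal is correct and follows essentially the same route as the paper: both reduce membership in $C=\cone(\Simp)$ to nonnegativity of all partial sums $s_h$, note that this description is identical for $B_n$ and $C_n$, and then run a finite case analysis on the relative positions of $i,j,k,l$. The paper adds two small streamlining observations you don't mention explicitly (for (B1), $s_n$ of a difference of two ``difference roots'' vanishes, so the computation literally reduces to the type-$A$ case; for (B3), (B5), (C5), $s_n$ of certain differences is automatically negative, so only one direction of the comparability check is nontrivial), but these are shortcuts within the same argument, not a different method. Two small slips in your writeup: you refer to the $C_n$ cases as (C1)--(C2) although the statement numbers them (C4)--(C5), and you describe (B3) as comparing a root against a short root $\pm e_i$ when in fact it compares $e_k-e_l$ against $e_i+e_j$; neither affects the substance.
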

\begin{proof} \ \\
    (B1): Since $s_n(e_s - e_t) = 0$ for all $s < t$, we have that 
$e_k - e_l, e_i - e_j$ are incomparable if and only if they are incomparable
    in type $A_{n-1}$. The claim now follows from (A1) of 
    Proposition~\ref{prop:A-incomp}.\\
    (B2): We may assume $i \le k$. If $i = k$, then $(e_k+e_l)-(e_i+e_j) = e_l
    -e_j$ or $e_j - e_l$ is in $C$. Hence $i < k$ and the first nonzero entry
    of $(e_k + e_l) - (e_i + e_j)$ is negative. Now $s_h(e_i+e_j - e_k - e_l)
    < 0$ for some $h$ if and only if $i < k < l < j$.\\
    Note that $s_n(e_k - e_l - t (e_r + e_s))  = -2t <0$ for all $r,s \in
    [n]$ and $t >0$ and hence $e_k - e_l - t (e_r +
    e_s)$ is never contained in $C$.\\
    (B3): We only need to verify $(e_i+e_j) - (e_k - e_l) \not \in C$. This is
    the case if the first nonzero coordinate is negative, which happens if and
    only if $k < i$.\\
    (B4) and (C4): Likewise, $t e_i - (e_k - e_l) \not \in C$ for $t \in
    \{1,2\}$ if and only if $k < i$.\\
    (B5): $s_n(e_i - (e_k+e_l)) = -1$ and hence $e_i - (e_k + e_l) \not \in
    C$. $(e_k + e_l) - e_i \not \in C$ if and only if $i < k$.\\
    (C5): $(e_k+e_l) - 2e_i \not \in C$ if and only if $i < l$ and 
$2e_i - (e_k+e_l) \not \in C$ if and only if $k < i$.
\end{proof}

\newcommand\bb[1]{\overline{#1}}%
The reflection group $W$ associated to the $B_n$ root system acts on $\R^n$ by
\emph{signed} permutations. A signed permutation is a pair $w = (t,\tau)$,
where $\tau$ is a permutation of $[n]$ and $t \in \{-1,+1\}^n$. Then $w$ acts
on the standard basis as $w(e_i) = t_i e_{\tau(i)}$. We represent $w$ in
\Def{window notation} and write $w = w_1\dots w_n \in
\{1,\dots,n,\bb{1},\dots,\bb{n}\}^n$ where $w_i = \bb{\tau(i)}$ if
$t_i =-1$ and $w_i = \tau(i)$ otherwise. For example, $t = (1,-1,1,-1)$ and
$\tau = (3,1,2,4)$ is denoted by $w = 3\bb{1}2\bb{3}$.
\begin{proof}[Proof of Theorem~\ref{thm:comp_roots} for $B_n$ and $C_n$] 
    Let $\alpha$ and $\beta$ be incomparable positive roots.
    For each case, we give a suitable signed permutation $w$ such
    that $w\Delta \cap \Pos = \{\alpha,\beta\}$.\\
    (B1): Let $e_i - e_j$ and $e_k - e_l$ be incomparable with $i < j$ and $i
    < k < l$. \\
    For $i < j < k < l$
    \[
        \bb{1}\, \bb{2}\, \dots\, \bb{i-1}\, \bb{i+1}\, \dots\, \bb{j-1}\,
        \bb{\mathbf{j}}\, \bb{\mathbf{i}}\, \bb{j+1}\, \dots\, \bb{k-1}\,
        \bb{k+1}\, \dots\, \bb{l-1}\, \bb{\mathbf{l}}\, \bb{\mathbf{k}}\,
        \bb{l+1}\, \dots\, \bb{n-1}\, \bb{n} \, .
    \]
    For $i < k \le j < l$
    \[
        \bb{1}\, \bb{2}\, \dots\, \bb{i-1}\, \bb{i+1}\, \dots\, \bb{k-1}\,
        \bb{\mathbf{l}}\, \bb{\mathbf{k}}\, \bb{k+1}\, \dots\, \bb{j-1}\,
        \bb{\mathbf{j}}\, \bb{\mathbf{i}}\, \bb{j+1}\, \dots\, \bb{l-1}\,
        \bb{l+1}\, \dots\, \bb{n-1}\, \bb{n}\, .
    \]

    (B2): For $e_i + e_j$ and $e_k + e_l$ with $i < k <
    l < j$
    \[
        \bb{1}\, \bb{2}\, \dots\, \bb{i-1}\, \bb{i+1}\, \dots\, \bb{k-1}\,
        \mathbf{k}\,
        \bb{\mathbf l}\, j-1\, j-2\, \dots\, l+1\, l-1\, \dots\, k+1\,
        \mathbf{i}\, \bar{\mathbf j}\,
        \bb{j+1}\, \dots\, \bb{n-1}\, \bb{n} \, .
    \]

    (B3): For $e_k - e_l$ with $k < l$ and $e_i + e_j$ with
    $k < i < j$ 
    \[
        \bb{1}\, \bb{2}\, \dots\, \bb{i-1}\, n\, n-1\, \dots\, 
        l+1\, \mathbf{k}\, \mathbf{l}\, l-1\, \dots\, i+1\, \mathbf{i}
        \, \bb{\mathbf j} \, .
    \]

    (B4) and (C4): For $e_k - e_l$ with $k < l$ and $e_i$ with $k < i$
    \[
        \bb{1}\, \bb{2}\, \dots\, \bb{i-1}\, n\, n-1\, 
        \dots\, l+1\, \mathbf{k}\, \mathbf{l}\, l-1\, \dots\, i+1\, \mathbf{i}
        \, .
    \]

    (B5) and (C5): For $e_k + e_l$ with $k < l$ and $e_i$ with $i <
    l$
    \[
        \bb{1}\, \bb{2}\, \dots\, \bb{i-1}\, \mathbf{k}\, \bb{\mathbf{l}}\,
        n\, n-1\, \dots\, l+1\, l-1\, \dots\,  i+1\,
        \mathbf{i} \, . \qedhere
    \]
\end{proof}

\subsection*{Type $D_{n}$}
A realization of the root system of type $D_n$ is given by the roots
$\pm(e_i - e_j)$ and $\pm(e_i + e_j)$ for $1 \le i < j \le n$.  For $c \in
\R^n$ with $c_1 > c_2 > \cdots > c_n > 0$, the positive and simple system
are
\[
    \Pos \ = \  \{ e_i - e_j, e_i + e_j : 1 \le i < j \le n \} 
    \quad \text{ and } \quad
    \Simp \ = \  \{ e_1 - e_2, e_2 - e_3, \dots, e_{n-1}-e_n, e_{n-1} + e_n \}
    \, .
\]
The cone $C = \cone(\Simp)$ is given by
\[
    C \ = \ \{ x \in \R^n : s_1(x) \ge 0, \dots, s_{n-1}(x) \ge 0, s_{n}(x)
    \ge 0, s_{n-1}(x) \ge x_n \} \, .
\]
\begin{prop}[Type $D_{n}$ incomparable pairs] \label{prop:D-incomp}
    Let $i < j$ and $k < l$. 
    \begin{enumerate}[\rm(D1)]
        \item $e_k - e_l, e_i - e_j$ are incomparable if and only if 
                $[i,j] \not\subseteq [k,l]$ and $[k,l] \not\subseteq [i,j]$;
        \item $e_k + e_l, e_i + e_j$ are incomparable if and only if  
            $[i,j] \subseteq (k,l)$ or $[k,l] \subseteq (i,j)$;

        \item $e_k - e_l, e_i + e_j$ are incomparable if and only if $k < i$
            or $j = l = n$.
    \end{enumerate}
\end{prop}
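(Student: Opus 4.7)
The plan is to mirror the strategy used in the proofs of (A1) and (B1)--(B5): for each pair of positive roots, compute $\beta-\alpha$ and $\alpha-\beta$ and test whether they lie in $C = \cone(\Simp)$ using the explicit inequality description given above the proposition. The new ingredient in type $D_n$ is that $\alpha_n$ is $e_{n-1}+e_n$ rather than $e_n$; inverting the simple-root expansion, the coefficient of $\alpha_{n-1}=e_{n-1}-e_n$ equals $(s_{n-1}(x)-x_n)/2$ and the coefficient of $\alpha_n$ equals $s_n(x)/2$. This produces the $D_n$-specific inequalities $s_n(x)\ge 0$ and $s_{n-1}(x)\ge x_n$, and it is exactly the latter that can distinguish $C$ from the type-$B$ cone.

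For case (D1), with $\alpha=e_i-e_j$ and $\beta=e_k-e_l$, we have $s_n(\beta-\alpha)=0$, so the condition $s_{n-1}(\beta-\alpha)\ge(\beta-\alpha)_n$ simplifies to $[j=n]\le[l=n]$. By the proof of (A1), the remaining inequalities $s_h(\beta-\alpha)\ge 0$ for $h<n$ hold iff $[i,j]\subseteq[k,l]$, which in particular forces $j\le l$ and therefore the $D_n$-condition. Symmetrically for $\alpha-\beta$. Hence (D1) collapses to (A1).

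For case (D2), with $\alpha=e_i+e_j$ and $\beta=e_k+e_l$, again $s_n(\beta-\alpha)=0$, and the $D_n$-specific inequality reduces to $[l=n]\ge[j=n]$. A direct case analysis along the lines of the proof of (B2)---distinguishing the orderings of $\{i,j,k,l\}$ and reading off the coefficient walk $h\mapsto s_h(\alpha-\beta)$---shows that whenever the $A$-part of the inequalities is satisfied one has $l\ge j$, which makes the $D_n$-condition automatic. The incomparable pairs are therefore exactly those identified in (B2).

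Case (D3) is the only one where the $D_n$-specific inequality changes the answer, and this is the main obstacle. For $\alpha=e_i+e_j$ and $\beta=e_k-e_l$, we compute $s_n(\beta-\alpha)=-2<0$, so $\beta-\alpha\notin C$ always. For the reverse, $s_n(\alpha-\beta)=2$ and $(\alpha-\beta)_n=[j=n]+[l=n]$ (using $i,k<n$), so the $D_n$-condition $s_{n-1}(\alpha-\beta)\ge(\alpha-\beta)_n$ becomes $[j=n]+[l=n]\le 1$, equivalently $j=l=n$ fails. Combining with the $B_n$-part of the analysis, which by (B3) holds iff $k\ge i$, we get $\alpha-\beta\in C$ iff $k\ge i$ and not $j=l=n$. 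Hence the pair is incomparable iff $k<i$ or $j=l=n$, as claimed. The delicate point is that in the configurations with $k\ge i$ and $j=l=n$---comparable in type $B_n$---the extra $D_n$-inequality genuinely fails; this is what produces the second disjunct in (D3) and has no analogue in the (A) or (B) cases.
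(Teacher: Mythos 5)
Your proof is correct and follows the same strategy as the paper: reduce membership in $C=\cone(\Simp)$ to the partial-sum inequalities, note that $s_n((e_k-e_l)-(e_i+e_j))<0$ rules out one direction, and for the other direction separate the type-$B$ condition (first negative partial sum, i.e.\ $k<i$) from the $D_n$-specific inequality $s_{n-1}(x)\ge x_n$ (which fails exactly when $j=l=n$). You also make explicit why (D1) and (D2) agree with the type-$B$ cases — the extra inequality is forced by the $A$/$B$ partial-sum conditions — a point the paper simply asserts by citing Proposition~\ref{prop:BC-incomp}. One small wobble in (D2): you first write the $D_n$-condition as $[l=n]\ge[j=n]$ after computing $s_n(\beta-\alpha)$, but that is the condition coming from $\alpha-\beta$ (for $\beta-\alpha$ it is $[j=n]\ge[l=n]$); since the argument is symmetric under swapping $(i,j)\leftrightarrow(k,l)$, the conclusion is unaffected.
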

\begin{proof}
    The cases (D1) and (D2) follow from Proposition~\ref{prop:BC-incomp}. For
    (D3) we again note that $s_n((e_k - e_l) - (e_i + e_j)) < 0$.
    Now, $x := (e_i + e_j) - (e_k - e_l) \not \in C$ if the first nonzero
    entry is negative or $s_{n-1}(x) < x_n$. The former happens if and only if
    $k < i$. The latter is true if and only if $j=l=n$.
\end{proof}

The reflection group $W$ associated to the $D_n$ root system acts on $\R^n$ by
\emph{signed} permutations with an \emph{even} number of sign changes. Thus,
only those signed permutations $w = w_1\dots w_n \in
\{1,\dots,n,\bb{1},\dots,\bb{n}\}^n$ are permitted with an even number of
barred positions.

\begin{proof}[Proof of Theorem~\ref{thm:comp_roots} for $D_n$] 
    (D1): Let $e_i - e_j$ and $e_k - e_l$ with $i < j$ and $i < k < l$. If $n$
    is even, then the signed permutations (B1) have an even number of signs
    and should be used. Otherwise, if $n$ is odd and $l < n$, then 
    \[
        \bb{1}\, \bb{2}\, \dots\,  \bb{j-1}\,
        \bb{\mathbf{j}}\, \bb{\mathbf{i}}\, \bb{j+1}\, \dots\,  \bb{l-1}\, \bb{\mathbf{l}}\, \bb{\mathbf{k}}\,
        \bb{l+1}\, \dots\, 
        \bb{n-1} \, \mathbf{n} \, .
    \]
    If $l = n$, then  
    \[
        \bb{1}\, \bb{2}\, \dots\,  \bb{j-1}\,
        \bb{\mathbf{j}}\, \bb{\mathbf{i}}\, \bb{j+1}\, \dots\,  \bb{n-1}\,
        \bb{\mathbf{n}}\, \mathbf{k}
         \, .
    \]
    (D2): For $e_i + e_j$ and $e_k + e_l$ with $1 \le i < k <
    l < j \le n$, if $j < n$, then, depending on the parity
    \[
        \begin{aligned}
        &\bb{1}\, \bb{2}\, \dots\, 
            \bb{k-1}\,
        \mathbf{k}\,
        \bb{\mathbf l}\, j-1\, j-2\, \dots\, 
            k+1\,
        \mathbf{i}\, \bar{\mathbf j}\,
            \bb{j+1}\, \dots\, \bb{n-1}\, \bb{n}  & \text{or } \\
        &\bb{1}\, \bb{2}\, \dots\, 
            \bb{k-1}\, \mathbf{k}\,
        \bb{\mathbf l}\, j-1\, j-2\, \dots\, 
            k+1\, \mathbf{i}\, \bar{\mathbf j}\,
        \bb{j+1}\, \dots\, \bb{n-1}\, n \, .
        \end{aligned}
    \]
    If $j = n$, then, depending on the parity,
    \[
        \begin{aligned}
        &\bb{1}\, \bb{2}\, \dots\, 
        \bb{k-1}\, \mathbf{k}\,
        \bb{\mathbf l}\, j-1\, j-2\, \dots\, 
            k+1\, \mathbf{i}\, \bar{\mathbf j} & \text{or } \\
        &\bb{1}\, \bb{2}\, \dots\, 
            \bb{k-2}\,
        \mathbf{k}\,
        \bb{\mathbf l}\, j-1\, j-2\, \dots\, 
            k+1\, k-1\, \mathbf{i}\, \bar{\mathbf j}\, .
        \end{aligned}
    \]
    (D3): For $e_k - e_l$ with $k < l$ and $e_i + e_j$ with
    $k < i < j < n$ 
    \[
        \begin{aligned}
        &\bb{1}\, \bb{2}\, \dots\, \bb{i-1}\,  n-1\, \dots\, 
            k+1\, \mathbf{k}\, \mathbf{l}\, k-1\, \dots\, i+1\,
            \mathbf{i}\bb{\mathbf j}  \, n & \text{or}\\
        &\bb{1}\, \bb{2}\, \dots\, \bb{i-1}\,  n-1\, \dots\, 
            k+1\, \mathbf{k}\, \mathbf{l}\, k-1\, \dots\, i+1\,
        \mathbf{i}\bb{\mathbf j}  \, 
        \bb{n} \, .
        \end{aligned}
    \]
    If $j = l = n$ and $i = k$, then, depending on whether $n$ is odd or not:
    \[
        \bb{1}, \bb{2}, \dots, \bb{n-1},  \mathbf{i}, \bb{\mathbf{n}} 
        \quad \text{ or } \quad 
        \bb{1}, \bb{2}, \dots, \bb{n-1},  \mathbf{i}, \mathbf{n} \, .
    \]
    If $j = l = n$ and $i \neq k$, then, depending on whether $n$ is odd or not:
    \[
        \bb{1}, \bb{2}, \dots, \bb{n-1},  \mathbf{i}, \mathbf{n}, \mathbf{k}
        \quad \text{ or } \quad 
        \bb{1}, \bb{2}, \dots, \bb{n-1},  \mathbf{i}, \mathbf{n},
        \bb{\mathbf{k}} \, . \qedhere
    \]
\end{proof}

\bibliographystyle{siam} \bibliography{References,bibliolp}

\end{document}